\title[Stability, connections and curves]{Stability data, irregular connections\\ and tropical curves}
\date{}
\author[S. A. Filippini]{Sara A. Filippini}
 \address{Institut de Math\'ematiques de Marseille, Technop\^ole Ch\^ateau-Gombert 39, rue Fr\'ed\'eric Joliot-Curie, 13453 Marseille Cedex 13, France}
\email{sara.filippini@univ-amu.fr}
\author[M. Garcia-Fernandez]{Mario Garcia-Fernandez}
\address{Instituto de Ciencias Matem\'aticas (CSIC-UAM-UC3M-UCM), Nicol\'as Cabrera 13-15, Cantoblanco, 28049 Madrid, Spain}
  \email{mario.garcia@icmat.es}
\author[J. Stoppa]{Jacopo Stoppa}
  \address{SISSA, Via Bonomea 265, 34136 Trieste, Italia}
  \email{jstoppa@sissa.it}
\thanks{}
\theoremstyle{plain}
\newtheorem{theorem}{Theorem}[section]
\newtheorem{lem}[theorem]{Lemma}
\newtheorem{corollary}[theorem]{Corollary}
\newtheorem{proposition}[theorem]{Proposition}
\theoremstyle{definition}
\newtheorem{definition}[theorem]{Definition}
\newtheorem{definition-theorem}[theorem]{Definition-Theorem}
\newtheorem{example}[theorem]{Example}
\theoremstyle{remark}
\newtheorem{remark}[theorem]{Remark}
\numberwithin{equation}{section} \setcounter{tocdepth}{1}
\newcommand\PP{\mathbb P}
\newcommand\C{\mathbb C}
\newcommand\Q{\mathbb Q}
\newcommand\R{\mathbb R}
\newcommand\Z{\mathbb Z}
\newcommand\N{\mathbb N}
\newcommand{\z}{z}
\newcommand{\eps}{\varepsilon}
\newcommand\Hom{\operatorname{Hom}}
\newcommand\Aut{\operatorname{Aut}}
\renewcommand\Im{\operatorname{Im}}
\renewcommand\Re{\operatorname{Re}}
\newcommand{\bra}{\langle}
\newcommand{\ket}{\rangle}
\newcommand{\wed}{\wedge}
\newcommand{\del}{\partial}
\newcommand{\A}{\mathcal{A}}
\newcommand{\G}{G}
\newcommand{\T}{T}
\newcommand{\W}{W}
\renewcommand{\H}{H}
\newcommand{\dt}{\operatorname{DT}}
\newcommand{\Stab}{\operatorname{Stab}}
\newcommand{\g}{\mathfrak{g}}
\makeatletter \@addtoreset{equation}{section} \makeatother
\newcommand{\CC}{{\mathbb C}}
\newcommand{\RR}{{\mathbb R}}
\newcommand{\cA}{\mathcal{A}}
\newcommand{\End}{\operatorname{End}}
\newcommand{\ad}{\operatorname{ad}}
\newcommand{\Ad}{\operatorname{Ad}}
\begin{document}

\begin{abstract}  We study a class of meromorphic connections $\nabla(Z)$ on $\PP^1$, para\-metrised by the central charge $Z$ of a stability condition, with values in a Lie algebra of formal vector fields on a torus. Their definition is motivated by the work of Gaiotto, Moore and Neitzke on wall-crossing and three-dimensional field theories. Our main results concern two limits of the families $\nabla(Z)$ as we rescale the central charge $Z \mapsto RZ$. In the $R \to 0$ ``conformal limit'' we recover a version of the connections introduced by Bridgeland and Toledano Laredo (and so the Joyce holomorphic generating functions for enumerative invariants), although with a different construction yielding new explicit formulae. In the $R \to \infty$ ``large complex structure" limit the connections $\nabla(Z)$ make contact with the Gross-Pandharipande-Siebert approach to wall-crossing based on tropical geometry. Their flat sections display tropical behaviour, and also encode certain tropical/relative Gromov-Witten invariants. 
 
\end{abstract} 

\maketitle 
\setcounter{tocdepth}{1}
 
\tableofcontents

\section{Introduction}\label{sec:intro}

Bridgeland-Toledano Laredo connections and Joyce holomorphic generating functions are important geometric objects in the theory of stability conditions. Given a finite length abelian category $\mathcal{C}$ the paper \cite{bt_stab} introduces a holomorphic family of meromorphic connections on $\PP^1$, parametrized by stability conditions on $\mathcal{C}$, of the form
\begin{equation}\label{btlConn}
\nabla^{BTL}(Z) = d - \left(\frac{Z}{t^2} + \frac{f(Z)}{t}\right)dt
\end{equation}  
where $Z$ is the central charge defining the stability condition and $t$ denotes a local coordinate on $\PP^1$. The connections $\nabla^{BTL}(Z)$ take values in an extension of the Ringel-Hall (Lie) algebra of $\mathcal{C}$ by central charges. The residue $f(Z)$ is the Joyce holomorphic generating function for invariants ``counting" stable objects in $\mathcal{C}$, introduced previously in \cite{joyHolo}. As we move the central charge $Z$ these invariants change discontinuously (\emph{wall-crossing}), but $\nabla^{BTL}(Z)$ varies holomorphically and its generalised monodromy remains constant (see Theorem \ref{BTLthm} for a precise statement). This approach builds on a large body of previous work, including \cite{Boalch2, ks, rein}.

Very similar ideas appeared independently in the physical literature, in the work of Gaiotto, Moore and Neitzke \cite{gmn}. The corresponding connections in physics are attached to a class of $\mathcal{N} = 2$ four-dimensonal gauge theories, and take the form
\begin{equation}\label{gmnConn}
\nabla^{GMN}(Z) = d - \left(\frac{\A^{(-1)}(Z)}{z^2} + \frac{\A^{(0)}(Z)}{z} + \A^{(1)}(Z)\right)dz
\end{equation}  
where $Z$ is the central charge of the theory and $z$ is a local coordinate on $\PP^1$. These connections take values in the Lie algebra of complex vector fields on a compact torus $(S^1)^{2 r}$, and depend on a physical counterpart of the counting invariants, the spectrum of BPS states of the theory. As we move the parameters of the theory, and so the central charge, the  spectrum of BPS states changes discontinuously, but $\nabla^{GMN}(Z)$ varies real-analytically and the generalised monodromy is constant. This approach is formally analogous to the classical work of Cecotti and Vafa \cite{vafa1} and Dubrovin \cite{dubrovin}, but is largely conjectural at the present time.

The main idea of the present paper is to use the point of view and techniques introduced in \cite{gmn} to get new insights on Bridgeland-Toledano Laredo connections, their deformations, and most importantly their relation to another aspect of wall-crossing, namely tropical geometry and the method introduced by Gross, Pandharipande and Siebert \cite{gps}. We do not assume familiarity with \cite{gmn} or with physical ideas. Indeed even if our treatment is motivated by \cite{gmn}, the basic idea of studying families of connections like \eqref{btlConn} by trying to embed them in families like \eqref{gmnConn} is well known and forms part of the theory of Frobenius type and CV-structures on arbitrary bundles due to C. Hertling \cite{hert}. 

We will work in an abstract setting, that of \emph{continuous families of stability data} with values in the (infinite-dimensional) Kontsevich-Soibelman Poisson Lie algebra $\g$. These were introduced in \cite{ks}. The algebra $\g$ is attached to a lattice $\Gamma$ with a skew-symmetric bilinear form and can be seen as a Poisson algebra of functions on a complex affine torus. The lattice $\Gamma$ is known as the \emph{charge lattice}, and the central charge is an element of $\Hom(\Gamma, \C)$. The connections considered in this paper take values in the Lie algebra $D^*(\widehat{\g})$ of derivations of a completion of $\g$ (a Lie algebra of formal vector fields on a torus).

In more concrete terms we work simply with a collection of rational numbers $\Omega(\gamma, Z)$ for $\gamma \in \Gamma$ which are constant in the central charge $Z$ in strata of $\Hom(\Gamma, \C)$, and satisfy the \emph{Kontsevich-Soibelman wall-crossing formula} across different strata (a collection of \emph{``Donaldson-Thomas invariants"}, or a \emph{BPS spectrum}). Note that, at least for suitable categories $\mathcal{C}$, the Bridgeland-Toledano Laredo connections $\nabla^{BTL}(Z)$ of \cite{bt_stab} specialize to $D^*(\widehat{\g})$-valued connections through a Lie algebra morphism, known as an integration map. On the other hand, from the point of view of \cite{gmn}, we are replacing complex vector fields with their formal analogue $D^*(\widehat{\g})$. Note also that a direct construction of a $D^*(\widehat{\g})$-valued Bridgeland-Toledano Laredo connection starting from a spectrum $\Omega(\gamma, Z)$, without passing through an integration map, is implicit in \cite{bt_stab, bt_stokes} and in \cite{ks} Section 2.8. Our  results may be summarized as follows.\\

\noindent\textbf{Preliminary existence result (Theorem \ref{prop:GMNconstruction}).} Given a spectrum $\Omega(\gamma, Z)$ as above, under a positivity assumption (Definition \ref{positivity}), we construct a corresponding family of connections $\nabla(Z)$ of the form \eqref{gmnConn}, parametrised by the central charge $Z$, with values in the Lie algebra of derivations $D^*(\widehat{\g})$. The coefficients $\A^{(i)}(Z)$ are real analytic functions of $Z$ and the Kontsevich-Soibelman wall-crossing formula becomes the statement that the resulting $\nabla(Z)$ have constant generalized monodromy on $\PP^1$, just as in \cite{bt_stab} and \cite{gmn}. Moreover, similarly to \cite{gmn}, the coefficients $\A^{(-1)}$ and $\A^{(1)}$ determine each other through a suitable symmetry (see \eqref{conjugation}). Our methods are different from those in \cite{bt_stab} and are very much inspired by the physical proposal of \cite{gmn}. The proof uses the positivity assumption on the spectrum, which implies that the infinite-dimensional Lie algebra $D^*(\widehat{\g})$ is pro-solvable (cf. \cite{bt_stab}). We discuss an extension beyond the positive case below. 

Theorem \ref{prop:GMNconstruction} is only a very special case in the vast literature on isomonodromy and it is probably quite standard for experts. What really matters for us is that the particular method of proof we follow allows us to obtain our main results, Theorems \ref{thm:tropical1}, \ref{thm:tropical2} and \ref{thm:limit}, which interpolate between Bridgeland-Toledano Laredo connections (and so Joyce functions) and tropical geometry (the results of Gross-Pandharipande-Siebert).\\

\noindent \textbf{Conformal limit (Theorem \ref{thm:limit}).} Considerations in \cite{gmn} suggest the study of a particular scaling limit of the connections $\nabla(Z)$. Namely one rescales the central charge by $Z \mapsto R Z$ while also rescaling the $\PP^1$ variable as $z = R t$ and letting $R \to 0$. This is called a \emph{conformal limit} in \cite{gaiotto}\footnote{In the physical setup the norm $| Z(\alpha) |$ is closely related to the energy of a state of charge $\alpha$. So invariance under $Z \mapsto R Z$ is related to scale invariance.}. The connections $\nabla(Z)$ are endowed with a natural, nontrivial, explicit \emph{frame} $g(Z) \in \Aut^*(\widehat{\g})$ (such that $g(Z)^{-1}\cdot \cA^{(-1)} = -Z$). Theorem \ref{thm:limit} shows that the rescaled connections $\nabla(R Z)_{z = R t}$ converge as $R \to 0$ to the a corresponding Bridge\-land-Toledano Laredo connection (of the form \eqref{btlConn}), in the specific gauge provided by $g(RZ)$. The frame is an interesting object in itself and encodes certain tropical enumerative invariants, as we explain below.

Theorem \ref{thm:limit} has some interesting implications. On the one hand we rediscover from this point of view the invariance property of Joyce functions $f(\lambda Z) = f(Z)$ (\emph{conformal invariance}) and provide new explicit formulae for the flat sections of the Bridgeland--Toledano Laredo connection. On the other hand the proof provides an alternative derivation of the inverse of the monodromy map in \cite{Boalch2} for suitable solvable groups (a first explicit description of this inverse follows from \cite{bt_stab,bt_stokes}).\\   

\noindent\textbf{New explicit formulae.} Theorems \ref{prop:GMNconstruction} and \ref{thm:limit} recover in particular the $D^*(\widehat{\g})$-valued Bridge\-land-Toledano Laredo connections with a different construction, as a scaling limit of $\nabla(Z)$. Although it is restricted to the Lie algebra of formal vector fields $D^*(\widehat{\g})$ (rather than some Ringel-Hall algebra specializing to it), our alternative construction applies to the more general type of connections \eqref{gmnConn} (with order $2$ poles both at $0$ and $\infty$), and yields new explicit formulae for the connections and their flat sections. The connection $\nabla(Z)$ is uniquely determined by a canonical system of local flat sections $X(z, Z)$, with values in $\Aut^*(\widehat{\g})$, given explicitly by a sum over graphs, 
\begin{equation}\label{intro:X}
X(z, Z) e_{\alpha} = e_{\alpha} \exp_*\left(z^{-1} Z(\alpha) + z \bar{Z}(\alpha) - \bra \alpha , \sum_{\T} W_{\T}(Z) G_{\T}(z, Z)\ket\right).
\end{equation}   
Here $\exp_*$ is defined via the standard series for the exponential map and the commutative product on $\g$. The $G_{\T}(z, Z)$ are a collection of $z$-holomorphic functions with branch-cuts, with values in $ \g $, and are graph integrals attached to a set of trees $\T$ decorated by elements of the lattice $\Gamma$. They should be compared to the multilogarithms appearing in \cite{bt_stab, bt_stokes}. The symbols $W_{\T}(Z) \in \Gamma \otimes \Q$ denote suitable combinatorial weights. We will give explicit formulae for $W_{\T}$ and $G_{\T}$ in \textbf{Section \ref{sec:GMN:sub:fixed}}. We will also find an explicit expression for the connection $\nabla(Z)$ (\textbf{Corollary \ref{cor:explicitA}}) and the frame $g(Z)$ (\textbf{Lemma \ref{lem:RHsolution}}). These results specialize to explicit formulae for $\nabla^{BTL}(Z)$ and its flat sections under Theorem \ref{thm:limit}, see \textbf{sections \ref{sec:BTLfixed}} and \textbf{\ref{sec:limit:sub:finish}}.\\

\noindent \textbf{Tropical limit (Theorems \ref{thm:tropical1} and \ref{thm:tropical2}).} The approach of \cite{joyHolo} and \cite{bt_stab} relates wall-crossing for counting invariants to meromorphic connections and their monodromy. There is an important alternative approach due to Gross, Pandharipande and Siebert based on tropical geometry \cite{gps}. It seems natural to ask if the two methods can be directly related. We will show that the connections $\nabla(Z)$ interpolate between the two. 

Indeed ideas from \cite{gmn} suggest to study another scaling limit, namely to look at $\nabla(R Z)$ as $R \to \infty$. This is not interesting for BTL connections due to the conformal invariance $f(R Z) = f(Z)$. On the contrary we show that in the case of their deformations $\nabla(Z)$ this limit is closely related to the geometry of rational tropical curves immersed in $\R^2$. This should be expected in the light of the physical construction \cite{gmn} and considerations from mirror symmetry: roughly speaking in the original physical and differential-geometric setup of \cite{gmn} the limit $R \to \infty$ should be mirror to a large complex structure limit, whose relation to tropical geometry is much studied (see e.g. \cite{gross}). 

Our results apply to the special local flat sections $X(z, R Z)$ of $\nabla(R Z)$. For simplicity we will only examine the model case when the charge lattice is $\Gamma \cong \Z^2$, generated by $\gamma, \eta$ with $\bra \gamma, \eta\ket = \kappa > 0$, with the simplest nontrivial stability data in a chamber $U^+$ in the parameter space $\mathcal{U}$. In other words we are looking at the $\kappa$-Kronecker quiver case and $U^+$ is the chamber is which only the simple objects are stable. Notice that according to \cite{ks} Section 1.4 the $\kappa$-Kronecker quiver case gives a sort of universal local model for wall-crossing formulae. The connections $\nabla(R Z)$ are far from trivial even for $\kappa = 1$ (the $A_2$ quiver case). Recall that the flat sections $X(z, R Z)$ are constructed using the special functions $G_{\T}(z, R Z)$ (the analogues of multilogarithms in the work of Bridgeland-Toledano Laredo). At a generic point $z^* \in \C^*$ the graph integrals $G_{\T}(z^*, R Z)$ have discontinuous jumping behaviour when $Z$ crosses the boundary $\del U^+$, while the flat section $X(z, R Z)$ is continuous across $\del U^+$. This will enable us to compute how the sum over graphs expansion \eqref{intro:X} changes across the critical locus $\del U^+$, and to relate this behaviour to tropical curves and invariants. We prove two main facts, stated as \textbf{Theorems \ref{thm:tropical1}} and \textbf{\ref{thm:tropical2}}:  
\begin{enumerate}
\item the graph integrals $G_{\T}(z^*, R Z)$ appearing in the expansion \eqref{intro:X} for flat sections of $\nabla(R Z )$ have a natural tropical interpretation for $R \to \infty$;
\item the sum of leading order $R \to \infty$ contributions over decorated graphs $T$ of the same tropical degree yields a tropical enumerative invariant.
\end{enumerate}
We provide here concise versions of the full statements given in Section \ref{sec:tropical} (where we also introduce the few notions from tropical geometry we actually need).
\begin{theorem}\label{thm:tropical1} As $Z$ crosses the boundary $\del U^+ \subset \mathcal{U}$ from the interior, a special function $G_{\T}(z^*, R Z)$ attached to a tree $T$ appearing in the sum over graphs expansion \eqref{intro:X} for the flat section $X(z^*, R Z)$ is replaced by a linear combination of the form 
\begin{equation} 
\sum_{T'} \pm G_{T'}(z^*, R Z ),
\end{equation}
where we sum over a finite set of trees $T'$ (not necessarily distinct). The terms corresponding to a single-vertex tree in the sum above are uniquely characterised by their asymptotic behaviour as $R \to \infty$. These leading order terms are in bijection with a finite set of weighted \emph{trivalent} graphs $C_i(T)$, which have a natural structure of \emph{combinatorial types of rational tropical curves} immersed in $\R^2$. 
\end{theorem}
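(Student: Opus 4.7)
The plan rests on combining two ingredients: the continuity of the flat section $X(z^*, RZ)$ across $\del U^+$ guaranteed by Theorem \ref{prop:GMNconstruction}, and the Kontsevich--Soibelman wall-crossing formula for the $\kappa$-Kronecker quiver spectrum. First I would fix a generic boundary point $Z_0 \in \del U^+$. In $U^+$ only the classes $\gamma,\eta$ are stable, so the sum \eqref{intro:X} runs over trees decorated by these two classes, whereas in the adjacent chamber the spectrum is enlarged by the infinite family of classes $m\gamma+n\eta$ with BPS degeneracies prescribed by wall-crossing. Equating the two resulting expressions for $X(z^*,RZ)$ in the pro-nilpotent group $\Aut^*(\widehat{\g})$, taking logarithms, and projecting onto a fixed graded piece of $\widehat{\g}$ produces a finite identity between linear combinations of graph integrals. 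Solving this identity for the individual $G_T$ of interest yields an expansion of the form $\sum_{T'}\pm G_{T'}(z^*,RZ)$, where each coefficient is fixed by a BPS multiplicity and by the combinatorics of iterated brackets in $\widehat{\g}$.

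Next I would show that the one-vertex trees on the right-hand side are isolated by asymptotics. Using the explicit formulae of section \ref{sec:GMN:sub:fixed}, a one-vertex $T'$ of total charge $\alpha$ gives a single oscillatory integral whose $R \to \infty$ behaviour is controlled by the critical points of the phase $z^{-1}Z(\alpha)+z\bar Z(\alpha)$, producing a leading contribution of definite exponential order. In contrast, a tree $T'$ with $k \geq 1$ internal edges of the same total charge contributes an iterated integral over $k$ additional contour variables, each of which carries a damping factor that is at least $O(R^{-1})$ relative to the single-vertex case, because the integrand is oscillatory and localises at the saddle. Thus the single-vertex terms are precisely the ones of maximal exponential order at $z^*$, a property that determines them uniquely inside the replacement formula.

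For the bijection with rational tropical curves, to each one-vertex $T'$ surviving the replacement I would attach a weighted graph $C_i(T)$ whose topology is the underlying tree of $T$, whose edges carry the lattice vectors given by the charges produced by the wall-crossing formula (viewed in $\Gamma\otimes\R \cong \R^2$), and whose weights are the corresponding BPS multiplicities. Trivalence is forced because the brackets in $\widehat{\g}$ responsible for creating new classes act pairwise, so each internal vertex has valence three; the balancing condition is automatic, since at every such vertex the three incident charges sum to zero by construction. This matches the definition of a combinatorial type of rational tropical curve in $\R^2$, and both sides of the claimed bijection are indexed by the same set of admissible gluings of $T$ compatible with the wall-crossing formula.

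The main obstacle is the first step: extracting a clean identity in a single graded piece of $\widehat{\g}$ from the non-commutative identity in $\Aut^*(\widehat{\g})$, and controlling the infinite sums arising from the enlarged spectrum. I expect this to be handled by an induction on the $\Z_{\geq 0}\gamma \oplus \Z_{\geq 0}\eta$-degree, exploiting the pro-nilpotence of $\widehat{\g}$ at each level; the remaining asymptotic and combinatorial steps should then proceed more routinely.
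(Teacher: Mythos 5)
Your approach diverges from the paper's at the very first step, and that step has a genuine gap. The paper does \emph{not} obtain the expansion $G_T(z^*,RZ^+)=\sum_{T'}\pm G_{T'}(z^*,RZ^-)+r$ by equating the two sum-over-graphs expansions for $X(z^*,RZ^{\pm})$ and inverting algebraically; instead it directly manipulates the iterated integral $G_T$ itself via Fubini and the residue theorem (contour deformation), pushing each integration ray $\ell(Z^+)$ over to $\ell(Z^-)$ one vertex at a time in a fixed total order, and interpreting each residue term as a new iterated integral labelled by a contracted tree. Your proposed algebraic route --- equating the two expansions in $\Aut^*(\widehat\g)$, taking $\log$, projecting to a fixed graded piece $\widehat\g_\beta$, and then ``solving for the individual $G_T$'' --- breaks precisely at the solving step. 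What you get on the $U^+$ side is a \emph{sum} over all trees $T$ in $U^+$ of total degree $\beta$, and on the $U^-$ side a sum over all trees $T'$ of degree $\beta$ decorated by the (generally much larger, possibly infinite for $\kappa\geq 2$) wall-crossed spectrum; there is no linear-algebra reason why an individual $G_T$ on the left should equal a finite $\pm$-combination of individual $G_{T'}$ on the right. Isolating a single $G_T$ \emph{is} the content of the theorem and requires the contour argument; the identity you describe is used by the paper only afterwards (in the proof of Theorem~\ref{thm:tropical2}) to relate the resulting signs to $\dt(\beta,Z^-)$, once the per-$T$ decomposition is already in hand.

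Two further points would also need repair. For the uniqueness of the single-vertex terms, the separation is \emph{exponential}, not the $O(R^{-1})$ per internal edge you suggest: a single-vertex term decays like $(2R|Z^-(\beta)|)^{-1}e^{-2R|Z^-(\beta)|}$ while a multi-vertex term is bounded by $\prod_i(2R|Z^-(\alpha(i))|)^{-1}e^{-2R|Z^-(\alpha(i))|}$, and the gain comes from the strict triangle inequality $\sum_i|Z^-(\alpha(i))|>|Z^-(\beta)|$ (which uses that $\beta$ is primitive). A mere polynomial factor would not give a characterisation that is uniform as $|Z^+-Z^-|\to 0$ and $R\to\infty$ simultaneously, which is what the argument requires. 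Finally, the tropical graphs $C_i(T)$ are \emph{not} topologically ``the underlying tree of $T$'': in the paper a tree $T$ with $|T^0|$ vertices produces graphs $C_i$ with $|T^0|$ incoming external edges, one outgoing external edge, and $|T^0|-1$ trivalent internal vertices, built out of the sequence of residue-theorem contractions $T=T_0\to T_1\to\cdots\to T_p$; the edge decorations are the partial sums of the $\alpha(i)$ along that contraction history, not BPS multiplicities, and several inequivalent $C_i$ can arise from the same $T$. The balancing condition and the sign $\eps(C_i)$ are both outputs of the residue theorem, which your version does not produce.
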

\noindent The fully detailed statement is given in Section 5.1. Explicit examples are given in Section \ref{sec:tropical}, Figures 1, 2 and 3.

Each (type of a) tropical curve $C_i$ appears with a sign $\eps(C_i) = \pm 1$ and it turns out that this sign is determined by the residue theorem. Each tree $T$ appearing in the expansion for $X(z^*, R Z)$ in the chamber $U^+$ defines a pair of unordered partitions $\deg(\T)$, whose parts are positive integral multiples of the generators $\gamma$, $\eta$: these are simply the decorations of vertices. In the light of Theorem \ref{thm:tropical1} it is natural to identify the pair of partitions $\deg(T)$ with a tropical degree ${\bf w}$. The sum of all parts of ${\bf w}$ can be regarded naturally as an element $\beta({\bf w}) \in \Gamma$. 
\begin{theorem}\label{thm:tropical2}  The sum of contributions $\eps(C_i(T)) = \pm 1$ over tropical types $C_i$, weighted by the coefficients $W_T$ in the expansion \eqref{intro:X} for flat sections in $U^+$,
\begin{equation*}
\sum_{\deg(T) = {\bf w}} W_T \sum_i \eps(C_i(T)) 
\end{equation*}
equals a tropical invariant $N^{\rm trop}({\bf w})$ enumerating plane rational tropical curves, times a simple combinatorial (``multi-cover") factor $c_{\bf w} \in \Gamma \otimes \Q$.   
\end{theorem}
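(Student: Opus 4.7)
The plan is to exploit the constancy of the flat section $X(z^*, RZ)$ across the wall $\partial U^+$, together with the explicit form of Kontsevich--Soibelman wall-crossing for the $\kappa$-Kronecker quiver and the Gross--Pandharipande--Siebert tropical formula, in order to read off the identity coefficient by coefficient at leading order in $R^{-1}$.

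First I would set up the comparison of expansions. Inside $U^+$ only the simple objects of classes $\gamma$ and $\eta$ are stable, so the expansion \eqref{intro:X} for $X(z^*, RZ)$ is indexed entirely by trees $T$ decorated by positive multiples of $\gamma$ and $\eta$ as described in section \ref{sec:GMN:sub:fixed}. Just outside $U^+$, additional classes $\beta = a\gamma + b\eta$ with $a,b \geq 1$ become stable and contribute extra single-vertex graph integrals of their own. Because $\nabla(RZ)$ has constant generalised monodromy (Theorem \ref{prop:GMNconstruction}), the two expansions must agree, and Theorem \ref{thm:tropical1} describes precisely how each interior graph integral $G_T$ is replaced, upon crossing $\partial U^+$, by a signed sum of exterior $G_{T'}$ in which only the single-vertex trivalent types $C_i(T)$ survive to leading order as $R \to \infty$.

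Next, fixing a tropical degree ${\bf w}$ with total class $\beta({\bf w}) = a\gamma + b\eta$, I would isolate on both sides of this identity the leading-order coefficient of the distinguished single-vertex integral of class $\beta({\bf w})$. By the organisation in Theorem \ref{thm:tropical1} together with the weights $W_T$ in \eqref{intro:X}, the interior side contributes exactly
\begin{equation*}
\sum_{\deg(T) = {\bf w}} W_T \sum_i \epsilon(C_i(T)).
\end{equation*}
On the exterior side the corresponding coefficient is $\Omega(\beta({\bf w}), Z)$ times a universal element $c_{\bf w} \in \Gamma \otimes \Q$ that accounts for the passage between the primitive class underlying $\beta({\bf w})$ and its integer multiples, this being the only source of fractional coefficients at single-vertex level. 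The theorem of Gross--Pandharipande--Siebert \cite{gps} then identifies $\Omega(\beta({\bf w}), Z)$ with the tropical enumerative invariant $N^{\rm trop}({\bf w})$, and combining this with the previous step yields the claimed equality.

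The main obstacle is the precise combinatorial matching of signs and multiplicities. One must verify that the signs $\epsilon(C_i(T))$ prescribed by the residue theorem in Theorem \ref{thm:tropical1} agree with the tropical vertex multiplicities appearing in the GPS formula, and that the non-single-vertex trees produced in the rewriting of Theorem \ref{thm:tropical1} contribute only subleading terms in $R^{-1}$ and therefore decouple from the coefficient being extracted. Once these two points are in hand, the identity is simply the leading-order reading of the continuity of $X(z^*, RZ)$ across $\partial U^+$ at the distinguished position in the sum-over-graphs expansion.
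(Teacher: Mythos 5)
Your overall strategy (continuity of the flat section across $\partial U^+$, matching leading-order coefficients, and appealing to GPS) is the right one, and your first step agrees with the paper: the uniqueness of the asymptotic behaviour of the single-vertex terms forces
\[
\dt(\beta, Z^-)\,\beta \;=\; \sum_{\substack{W_T(Z^+) \neq 0 \\ \sum_i \alpha(i) = \beta}} W_T \sum_i \eps(C_i(T)),
\]
which is equation \eqref{dt} in section 5.8. However, there is a genuine gap between this equation and the statement you want to prove. The continuity argument naturally fixes only the total class $\beta = \sum_i \alpha(i)$, and \emph{many distinct tropical degrees} ${\bf w}$ can have the same total class $\beta({\bf w})$ (for instance ${\bf w} = ((2),(1))$ and ${\bf w} = ((1,1),(1))$ both have $\beta = 2\gamma + \eta$). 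Your claim that one can ``isolate on both sides the leading-order coefficient'' so that the interior side contributes exactly $\sum_{\deg(T) = {\bf w}} W_T \sum_i \eps(C_i(T))$ is not justified: the exterior side at class $\beta$ is a single number $\dt(\beta, Z^-)$, and nothing in the residue-theorem organisation separates trees of different tropical degree with the same total class.

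This is precisely what the refinement step in section 5.9 is for: one passes to an enlarged lattice $\overline{\Gamma}$ with independent generators $\gamma_1,\dots,\gamma_{l_1},\eta_1,\dots,\eta_{l_2}$ (so that distinct parts of ${\bf w}$ are recorded by distinct lattice directions), and introduces a formal parameter $\epsilon$ multiplying the spectrum so that the number of vertices can be read off from the $\epsilon$-degree. Only after this bookkeeping does the coefficient of $\epsilon^{l_1+l_2}(s,t)^{\bar\beta}$ isolate the desired sum over $\deg(T)={\bf w}$, and only then does GPS Theorem 2.8 (also applied over $\overline\Gamma$, and only after extracting the $\epsilon^{l_1+l_2}$-part) produce a single $N^{\rm trop}({\bf w})$ rather than a sum over refinements ${\bf w}'$. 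Your claim that GPS ``identifies $\Omega(\beta({\bf w}), Z)$ with the tropical enumerative invariant $N^{\rm trop}({\bf w})$'' directly overstates the GPS formula, which in general outputs a weighted sum over many weight vectors ${\bf w}'$ with the prescribed total class. In short, the main idea is right but the separation by tropical degree — both on the tree side and on the GPS side — is a non-trivial combinatorial step that your proposal skips.
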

\noindent Tropical invariants are briefly recalled in Section 5.6. The fully detailed statement (including the expression of $c_{\bf w}$) is given in Section 5.7. For an explicit example see Section \ref{sec:tropical}, Figure 4. Theorem \ref{thm:tropical2} uses the  wall-crossing theory developed by Gross, Pandharipande and Siebert in \cite{gps} and is in fact equivalent to part of it. The enumerative invariants $N^{\rm trop}({\bf w})$ are in turn equivalent to certain relative Gromov-Witten invariants (as proved in \cite{gps}). 

Theorems \ref{thm:tropical1}, \ref{thm:tropical2} and their proofs have as an immediate consequence the following much weaker result.
\begin{corollary} Fix a generic $z^* \in \C^*$. As $Z \to \del U^+$ from the interior and $R \to \infty$ there are expansions for flat sections
\begin{align*}
&e^{- \frac{1}{z^*} R Z(\alpha) - z^* R \bar{Z}(\alpha)} X(z^*, R Z) e_{\alpha} \\
&\sim e_{\alpha} \exp_*\left< - \alpha , \sum_{\bf w} c_{\bf w} N^{\rm trop}({\bf w}) e_{\beta(\bf w)} \frac{e^{- |Z(\beta({\bf w}))|R}}{2|Z(\beta({\bf w}))|R} \right> 
\end{align*}
as well as for the gauge transformations (connection frames)
\begin{equation*}
g(R Z) e_{\alpha} \sim e_{\alpha} \exp_*\left< - \alpha , \sum_{\bf w} c_{\bf w} N^{\rm trop}({\bf w}) e_{\beta(\bf w)} \frac{e^{- |Z(\beta({\bf w}))|R}}{2|Z(\beta({\bf w}))|R} \right> 
\end{equation*}
(notice that this last statement is independent of $z^*$).
\end{corollary}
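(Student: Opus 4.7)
The plan is to treat the Corollary as a direct consequence of Theorems \ref{thm:tropical1} and \ref{thm:tropical2}, combined with a standard Laplace-type asymptotic analysis of the graph integrals $G_{\T}(z^*, R Z)$ as $R\to\infty$. I would start from the explicit expansion \eqref{intro:X}, divide both sides by $\exp(z^{*-1} R Z(\alpha) + z^* R \bar{Z}(\alpha))$ and, exploiting that $\exp_*$ is the exponential for the commutative product on $\g$, rewrite the left-hand side as
\begin{equation*}
e_{\alpha}\exp_*\!\left\langle -\alpha,\, \sum_{\T} W_{\T}(R Z)\, G_{\T}(z^*, R Z)\right\rangle.
\end{equation*}
Thus the task is reduced to establishing the stated leading asymptotics inside the bracket.

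Next I would take $Z \to \partial U^+$ from the interior. By Theorem \ref{thm:tropical1}, each term $W_{\T}(R Z) G_{\T}(z^*, R Z)$ is replaced by a finite signed combination $\sum_{T'} \pm G_{T'}(z^*, R Z)$, whose single-vertex contributions are precisely the ones of strictly slowest exponential decay in $R$ and are in bijection with combinatorial types $C_i(T)$ of rational tropical curves in $\RR^2$. For a single-vertex tree decorated by a charge $\beta \in \Gamma$, the defining graph integral (recorded in section \ref{sec:GMN:sub:fixed}) is, up to sign, the standard Bessel-type integral
\begin{equation*}
\int_{\ell_\beta} \frac{e^{-R(s^{-1} Z(\beta) + s\bar Z(\beta))}}{s}\,\frac{ds}{s},
\end{equation*}
whose stationary-phase evaluation at $s = \bar Z(\beta)/|Z(\beta)|$ yields $e_{\beta}\,\dfrac{e^{-|Z(\beta)|R}}{2|Z(\beta)|R}$ to leading order, independently of $z^*$. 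Multi-vertex trees survive at the boundary only with a strictly smaller exponential factor, because by Theorem \ref{thm:tropical1} the slowest exponents are realized exactly on single-vertex terms.

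At this stage Theorem \ref{thm:tropical2} takes over the combinatorics: grouping the surviving single-vertex contributions by tropical degree $\mathbf{w}$, the sum of signs $\varepsilon(C_i(T))$ weighted by the rational coefficients $W_{\T}(RZ)$ collapses to $c_{\mathbf{w}}\, N^{\mathrm{trop}}(\mathbf{w})$ attached to a single element $\beta(\mathbf{w}) \in \Gamma$. Inserting the Bessel asymptotic above gives exactly the claimed asymptotic expansion for $e^{-z^{*-1}RZ(\alpha) - z^* R\bar Z(\alpha)} X(z^*, RZ)\, e_{\alpha}$.

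For the frame $g(RZ)$ I would use its explicit Rouquier–Hall-type formula (Lemma \ref{lem:RHsolution}), which expresses $g(Z)$ as essentially the same sum over decorated trees as in \eqref{intro:X} but with a $z$-independent kernel that agrees, at leading order as $R \to \infty$, with the stationary-phase value of the graph integrals $G_{\T}$ at the critical point on $\ell_\beta$. The same tree-by-tree argument above then goes through verbatim, and explains why the asymptotic for $g(RZ) e_{\alpha}$ is independent of $z^*$. The main technical obstacle is the uniformity statement that \emph{no} multi-vertex tree can produce a leading-order exponential factor $e^{-|Z(\beta(\mathbf{w}))|R}$ matching that of a single-vertex tree of the same total charge — this is precisely the content of Theorem \ref{thm:tropical1} (and its proof via the tropical interpretation of $G_{\T}$), so once that theorem is in hand the Corollary follows.
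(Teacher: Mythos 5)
Your plan is essentially the paper's own: the authors present the Corollary as an immediate consequence of Theorems \ref{thm:tropical1} and \ref{thm:tropical2} together with the Laplace/Bessel estimates of \S\ref{BesselLemma}, and your reduction — divide out the $X^0$-factor, invoke Theorem \ref{thm:tropical1} to isolate single-vertex terms at leading exponential order, evaluate the one-vertex graph integral by steepest descent, then use Theorem \ref{thm:tropical2} to collapse signs and weights to $c_{\bf w}N^{\rm trop}({\bf w})$ — is the intended derivation. A few slips worth flagging: the saddle on $\ell_\beta=-\R_{>0}Z(\beta)$ is $s^*=-Z(\beta)/|Z(\beta)|$, not $\bar Z(\beta)/|Z(\beta)|$, and the paper's convention carries no overall minus sign in the exponent of $X^0$; the Laplace evaluation produces $e^{-2|Z(\beta)|R}$ as in the precise form of Theorem \ref{thm:tropical1}; and at the saddle the outer kernel equals $\tfrac{1}{4\pi i}\tfrac{s^*+z^*}{s^*-z^*}$ rather than the constant $\tfrac{1}{4\pi i}$ appearing in $G^0_T$, so the assertion that the $X(z^*,RZ)$ and $g(RZ)$ asymptotics share the same constants requires an argument that this factor is absorbed — a point you pass over, though the paper does as well.
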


Our results give a precise meaning to the intuition that Joyce's generating functions and the Gross-Pandharipande-Siebert wall-crossing theory, involving tropical counts and Gro\-mov-Witten theory, appear when one expands a single geometric object (in our case, the connections $\nabla$) at different points in parameter space ($R\to 0$, respectively $R\to\infty$). When taking the large $R$ limit we use the specific form of flat sections of $\nabla(RZ)$, and we have not been able to find a similar tropical structure underlying the special functions (multilogarithms) used in \cite{bt_stab}.\\
 
\noindent \textbf{Symmetric spectrum}. The positivity assumption (Definition \ref{positivity}) says simply that $\Omega(\gamma, Z)$ vanishes unless $\gamma$ belongs to some strictly convex (``positive") cone. This is analogous to the restriction to abelian categories in \cite{bt_stab} (there is also a similar assumption for the scattering diagrams in \cite{gps}). Removing this positivity assumption is an important open problem first pointed out in \cite{joyHolo}, and is related to the problem of extending the results of \cite{bt_stab} to triangulated categories. It is especially important to allow a \emph{symmetric spectrum} for which $\Omega(\gamma, Z) = \Omega(-\gamma, Z)$ for all $\gamma \in \Gamma$. In the categorical case this symmetry is induced by the shift functor $[1]$. This symmetry is also crucial from the physical point of view of \cite{gmn}, reflecting a basic symmetry of the physical theory (``CPT conjugation").

We show that our existence and specialization results Theorems\ref{prop:GMNconstruction} and \ref{thm:limit} extend to the symmetric case whenever there is a suitable lift of the continuous family with values in $\g$ to the ring of formal power series $\g[[\tau]]$, a point of view inspired by \cite{gps} (see Definition \ref{lift}). In interesting examples (e.g. for sufficiently simple triangulated categories) such lifts always exist locally in $Z$. The $\Gamma$-graded components of $\A^{(i)}(Z)$ and of the Joyce function $f(Z)$ become formal power series in $\tau$ (in the positive case these would be just polynomials in $\tau$, and the lift is unnecessary). Their value at $\tau = 1$ is independent of the lift, modulo convergence. 

This convergence problem for Joyce functions in the symmetric case was first pointed out in \cite{joyHolo}. It is especially hard because of conformal invariance. In \cite{us} our results are used to solve the convergence problem for the matrix elements of the connection $1$-form of $\nabla(R Z)$ for sufficiently large $R$ in a special but nontrivial class of examples, thus showing that $\nabla(R Z)$ give a regularisation of the Brigdeland-Toledano Laredo connections.\\

\noindent \textbf{Plan of the paper}. Section \ref{sec:stabdata} contains some background material on continuous families of stability data, BPS spectra, and the Kontsevich-Soibelman (KS) wall-crossing formula. In Section \ref{sec:gmn}, after recalling the basic properties of the Bridgeland-Toledano Laredo connections in the formal context of this paper, we prove the preliminary existence result Theorem \ref{prop:GMNconstruction} for the connections $\nabla(Z)$. Section \ref{sec:limit} is devoted to a proof of the specialization result for $R \to 0$, Theorem \ref{thm:limit}. Finally Section \ref{sec:tropical} contains the precise statements and proofs of our results on the opposite scaling limit of $\nabla(Z)$ as $R \to \infty$ and its relation to tropical curves and the theory developed by Gross, Pandharipande and Siebert in \cite{gps} (Theorems \ref{thm:tropical1} and \ref{thm:tropical2}). We will recall the few basic notions about generalised monodromy and tropical curves and invariants which we actually need along
  the way in sections \ref{sec:gmn} and \ref{sec:tropical}. 
 
\subsection*{Acknowledgements} We thank Anna Barbieri, Gilberto Bini, Tom Bridgeland, Ben Davison, Tamas Hausel, Daniel Huybrechts, Alessia Mandini, Luca Migliorini, Vivek Shende, Ivan Smith, Tom Sutherland, Szilard Szabo and Michael Wong for helpful comments and discussions. We are grateful to an anonymous Referee for important corrections and suggestions. This research was partially supported by the \'Ecole Polytechnique F\'ed\'eral de Lausanne and the Hausdorff Institute for Mathematics, Bonn (through the Junior Hausdorff Research Trimester ``Mathematical Physics"). The research leading to these results has received funding from the European Research Council under the European Union's Seventh Framework Programme (FP7/2007-2013)/ERC Grant agreement no. 307119.

\section{Continuous families of stability data}\label{sec:stabdata}
The basic object underlying the constructions in this paper, a \emph{(numerical) BPS spectrum} (or a ``collection of numerical Donaldson-Thomas invariants"), is essentially elementary and combinatorial: it is a collection of rational numbers $\Omega(\gamma, Z)$ which are functions of a point $\gamma$ of a lattice $\Gamma$ and of the central charge $Z \in \Hom(\Gamma, \C)$, satisfying an interesting Lie-theoretic identity, the \emph{Kontsevich-Soibelman wall-crossing formula}. In this Section we recall this concept by placing it in the necessary context of stability data on graded Lie algebras. 
\subsection{Spaces of stability data} Fix a rank $n$ lattice $\Gamma$, and let $\g = \bigoplus_{\gamma \in \Gamma} \g_{\gamma}$ denote a general $\Gamma$-graded Lie algebra over $\Q$ (we will soon specialize to the Lie algebra which is relevant to describe a BPS spectrum $\Omega$). The \emph{space of stability data} $\Stab(\g)$ on $\g$ is a complex $n$-dimensional manifold introduced by M. Kontsevich and Y. Soibelman in \cite{ks}. Its points are given by pairs $(Z, a)$ were $Z\!: \Gamma \to \C$ is a group homomorphism and $a\!: \Gamma\setminus\{0\} \to \g$ is a map of sets which preserves the grading (that is, such that $a(\gamma) \in \g_{\gamma}$). 
Additionally one requires that $(Z, a)$ satisfy the \emph{support property}
\begin{equation}\label{eq:support}
|| \gamma || \leq C |Z(\gamma)|
\end{equation}
whenever $\gamma \in \textrm{Supp}(a)$, that is, $a(\gamma)\neq 0$, for some arbitrary norm on $\Gamma\otimes_{\Z}\C$ and some constant $C > 0$ (the condition does not depend on the choice of norm). In particular the set $\{Z(\gamma): \gamma\in \textrm{Supp}(a)\} \subset \C$ is discrete. In algebro-geometric applications $Z$ is the central charge for a stability condition on a category $\mathcal{C}$ with Grothendieck group $\Gamma$, and the element $a(\gamma) \in \g_{\gamma}$ corresponds to a ``count" of $Z$-semistable objects of class $\gamma$. The Lie algebra $\g$ is typically infinite-dimensional and the support property is a quantitative analogue of the fact that the central charge of a semistable object should not vanish. In general $\Stab(\g)$ has an uncountable number of connected components.
 
The topology on the space of stability data \cite[Section 2.3]{ks} is essentially characterised by two properties. The first is that the projection $(Z, a) \mapsto Z$ on $\Hom(\Gamma, \C)$ is a local homeomorphism (this is the reason for working over $\Q$). So \emph{locally} we can think of stability data as parametrised by an open subset $U \subset \Hom(\Gamma, \C)$, i.e. the algebra elements $a(\gamma)$ are (possibly multi-valued) functions of $Z$. The second property is that the \emph{Kontsevich-Soibelman wall-crossing formula} should hold. To formulate this we notice that for each strictly convex cone $V \subset \C^*$ and stability data $(Z, a)$ we can define a complete Lie algebra $\widehat{\g}_{V, (Z, a)}$ topologically generated by elements $a(\gamma)$ such that $Z(\gamma) \in V$. As long as $Z(\textrm{Supp}(a))$ does not cross $\del V$ these completions $\widehat{\g}_{V, Z}$ are all subalgebras of a single complete Lie algebra $\widehat{\g}_V$. Given stability data $(Z, a)$ and a ray $\ell$ with $-\ell \subset V$, we define a group element 
\begin{equation}\label{eq:stokesdata}
S_\ell(Z) = \exp\left(\sum_{\gamma \in \Gamma, Z(\gamma) \in -\ell} a(\gamma) \right) \in \exp(\widehat{\g}_{V, (Z, a)})
\end{equation}
(the odd looking minus sign here is chosen to fit with our sign conventions in Section \ref{sec:gmn}). If a family $(Z_t, a_t)$ of stability data on $\g$ parametrised by $[0, 1]$ is continuous at $t_0$, then the support condition \eqref{eq:support} holds uniformly in a neighborhood of $t_0$ and for any strictly convex cone $V \subset \C^*$ such that $Z(\textrm{Supp}(a_{t_0})) \cap \partial V = \emptyset$, the group element $S_V$ given by 
\begin{equation}\label{eq:wall-crossing}
S_V = \prod^{\longrightarrow}_{\ell \subset V} S_\ell(Z_{t}) \in \widehat{\g}_V 
\end{equation}
is constant in this neighborhood. Here $\prod^{\to}$ denotes the slope-ordered product, writing the operators from left to right according to the clockwise order of rays $\ell$. This condition is known as the \emph{Kontsevich-Soibelman wall-crossing formula} and plays a central role in the theory of Donaldson-Thomas invariants.

\subsection{The Kontsevich-Soibelman Poisson Lie algebra and the Lie algebra of derivations}\label{sec:gdef} The graded Lie algebras which are most relevant for the present paper are infinite-dimensional ones introduced by Kontsevich and Soibelman in \cite[Section 2.5]{ks} and \cite[Section 10.1]{ks2}. They may be thought of, respectively, as a Poisson algebra of functions on a complex affine algebraic torus and a Lie algebra of complex vector fields (not necessarily preserving the Poisson structure). 

Let $\Gamma$ denote a lattice of finite rank $n$ endowed with an integral, skew-symmetric bilinear form $\bra - , - \ket$. In the rest of the paper we write $\g$ for the infinite-dimensional complex Lie algebra generated by symbols $e_{\gamma}$ for $\gamma \in \Gamma$, with bracket 
\begin{equation*}
[e_{\gamma}, e_{\eta}] = \bra \gamma, \eta\ket e_{\gamma + \eta}.  
\end{equation*}
We can also define a \emph{commutative} product $*$ on $\g$ simply by 
$e_{\gamma} * e_{\eta} = e_{\gamma + \eta}$ (i.e. $(\g, *)$ is the group algebra $\C[\Gamma]$). The product $*$ turns $\g$ into a Poisson algebra, i.e. the Lie bracket acts as a derivation.

\begin{remark} A different version of $\g$ which is often used has bracket
\begin{equation*}
[e_{\gamma}, e_{\eta}] = (-1)^{\bra \gamma, \eta\ket} \bra \gamma, \eta\ket e_{\gamma + \eta}.  
\end{equation*}
These two versions are isomorphic (non-canonically). Also notice that $\g$ is really defined over $\Z$, so in particular integral and rational elements of $\g$ are well defined.
\end{remark}

The second graded Lie algebra relevant for the present paper is the $\g$-module of derivations of $\g$ as a commutative, associative algebra, and will be denoted by $D^*(\g)$. Explicitly the Lie algebra of derivations $D^*(\g)$ is spanned by elements $e_\gamma \partial_\mu$ where $\gamma \in \Gamma$, $\mu \in \Gamma^\vee$ satisfying the
linear relations
\begin{equation*}
e_\gamma \partial_{\mu_1} + e_\gamma \partial_{\mu_2} = e_\gamma \partial_{\mu_1 + \mu_2}.
\end{equation*}
The commutator rule is given by
\begin{equation*}
[e_{\gamma_1} \partial_{\mu_1},e_{\gamma_2} \partial_{\mu_2}] = e_{\gamma_1+\gamma_2}(\mu_1(\gamma_2)\partial_{\mu_1} - \mu_2(\gamma_1) \partial_{\mu_2}).
\end{equation*}
We note that the definition of this bracket does not require the bilinear form on $\Gamma$. The role of $\bra - , - \ket$ is to define a Lie algebra morphism
\begin{equation}\label{eq:Liemorp}
\g \to D^*(\g) \colon e_\gamma \mapsto \ad(e_\gamma) = e_\gamma \partial_{\bra \gamma , - \ket}.
\end{equation}
The image of \eqref{eq:Liemorp} singles out a Lie subalgebra of \emph{divergence-free derivations}, spanned by elements $e_\gamma \partial_\mu$ with $\bra \mu,\gamma\ket = 0$. The dimension of each $\Gamma$-graded component this Lie subalgebra is $\operatorname{rank}(\Gamma) - 1$. The morphism \eqref{eq:Liemorp} extends naturally to the semi-direct product $\Hom(\Gamma, \C) \ltimes \g$ with crossed relations $[Z,e_\gamma] = Z(\gamma)e_\gamma$.

The three Lie algebras considered are used at different stages of our construction. The generalised monodromy (Stokes data) for all the connections in this work will be defined in terms of (a completion of) $\g$, while the construction of the connections $\nabla(Z)$ requires the use of $D^*(\g)$ (due to our particular method of proof, inspired by \cite{gmn}). The Lie algebra $\Hom(\Gamma, \C) \ltimes \g$ is preferred by the methods of \cite{bt_stab}, and will be implicitly used for the uniqueness of the conformal limit in Section \ref{sec:gmn:sub:finish}.

\subsection{Positive stability data}\label{sec:positivestab} In this paper we will work most of the time with a fixed amenable subalgebra $\mathfrak{g}_{\geqslant 0} \subset \g$ and its completion $\widehat{\g}$. The algebra $\mathfrak{g}_{\geqslant 0}$ is the analogue in our setup of the bialgebra in \cite[4.3]{bt_stab}. Fix a strictly convex cone $\Gamma_{\geqslant 0} \subset \Gamma$. Let $\g_{\geqslant 0} \subset \g$ be the Poisson Lie subalgebra 
\begin{equation*}
\g_{\geqslant 0} = \langle e_\gamma : \gamma \in \Gamma_{\geqslant 0} \rangle \subset \g
\end{equation*}
and note that $\g_{\geqslant 0}$ is graded by the semi-group $\Gamma_{\geqslant 0}$. For each $k \geqslant 1$, we denote by $\Gamma_{>k} \subset \Gamma_{\geqslant 0}$ the cone generated by elements $\gamma_1 + \ldots + \gamma_m$ for $m > k$ and $\gamma_i \in \Gamma_{\geqslant 0}$, $\gamma_j \neq 0$ for all $j = 1, \ldots, m$. The subspace $\g_{> k} \subset \g_{\geqslant 0}$ induced by $\Gamma_{>k}$ is an ideal. Consider the finite-dimensional nilpotent Lie algebra 
\begin{equation*}
\g_{\leqslant k} = \g_{\geqslant 0} / \g_{>k}.
\end{equation*}
The $\g_{\leqslant k}$ form an inverse system, and we define
\begin{equation*}
\widehat \g = \lim_{\longleftarrow} \g_{\leqslant k}.   
\end{equation*}
Similarly we define a pro-Lie group $\widehat G$ with Lie algebra $\widehat \g$ by
\begin{equation*}
\widehat G = \lim_{\longleftarrow} G_{\leqslant k}
\end{equation*}
where $G_{\leqslant k} = \exp(\g_{\leqslant k})$ is the Lie group with Lie algebra $\g_{\leqslant k}$. 
Note that $\widehat \g$ inherits a natural structure of Poisson Lie algebra, and it is possible to define in a standard way the commutative algebra $\exp_*$ and $\log_*$ maps which are each other's inverse.

Throughout the paper we will write $\Aut^*(\widehat{\g})$ for the group of automorphisms of $\widehat{\g}$ as a commutative algebra (we do not require that these preserve the Lie bracket). Similarly we always write $D^*(\widehat{\g})$ for the $\widehat{\g}$-module of derivations of $\widehat{\g}$ as a commutative, associative algebra. We will often forget the notation $*$ and simply write $e_{\gamma} e_{\eta}$ for the product, but at some points it will be important to have a special symbol for it to avoid confusion (especially to distinguish between the commutative algebra exponential $\exp_*$ and the Lie algebra exponential $\exp$).

We will mostly assume that our stability data on $\g$ are supported in a cone $\Gamma_{\geqslant 0}$. This is analogous to similar positivity assumptions in \cite{bt_stab} and \cite{gps}. 
\begin{definition}\label{positivity}
Fix a cone $\Gamma_{\geqslant 0}$. We say that $(Z,a) \in \Stab(\g)$ is \emph{positive} if $a(\gamma) \neq 0$ implies $\gamma \in \Gamma_{\geqslant 0}$.
\end{definition}

Note that for positive stability data all the rays $\ell_{\gamma}(Z) = - \R_{> 0} Z(\gamma)$ for $a(\gamma) \neq 0$ are contained in a fixed half-space $\mathbb{H}' \subset \CC$. For positive stability data all wall-crossing formulae take place in the fixed completion $\widehat{\g}$. In this setting, it is standard to rewrite \eqref{eq:wall-crossing} in a different way using the Poisson structure. Firstly notice that for any $\gamma \in \Gamma$ we may rewrite by M\"obius inversion
\begin{equation*}
a(\gamma) = -\sum_{n \geq 1, n | \gamma}\frac{\Omega(\gamma/n)}{n^2} e_{\gamma}.
\end{equation*}
\begin{definition} We call the collection of rational numbers $\Omega(\gamma)$ the \emph{BPS spectrum} (or simply the \emph{spectrum}) of the given family of stability data. It is naturally a function of the central charge $Z$. 
\end{definition}
\noindent For $\gamma \in \Gamma^{\rm prim}$, that is, for primitive $\gamma$ we have
\begin{equation*}
a(k \gamma) = - \sum_{p, n \geq 1, p n = k} \Omega(p \gamma) \frac{e_{p n \gamma}}{n^2}. 
\end{equation*}
Summing over all $k \geq 1$ and using standard dilogarithm notation we find
\begin{equation*}
\sum_{k \geq 1} a(k \gamma) = - \sum_{p \geq 1} \Omega(p \gamma) \sum_{n \geq 1} \frac{e_{p n \gamma}}{n^2} = - \sum_{p \geq 1} \Omega(p \gamma) \operatorname{Li}_2(e_{p\gamma}). 
\end{equation*} 
Since $\g$ is Poisson, for all $\gamma \in \Gamma_{\geqslant 0}$   
(not necessarily primitive) $[\operatorname{Li}_2(e_{\alpha}), -]$ acts as a commutative algebra derivation on the completion $\widehat{\g}$.   
Therefore $\exp\left(-[\operatorname{Li}_2(e_{\gamma}), -]\right)$ (the exponential of a derivation) acts as an algebra automorphism $T_{\gamma}$ of $\widehat{\g}$, preserving the Lie bracket (a Poisson automorphism). Direct computation shows that this action is especially nice:
\begin{equation*}
T_{\gamma}(e_{\eta}) = e_{\eta} (1 - e_{\gamma})^{\bra \gamma, \eta\ket}.
\end{equation*}
 Recall that we denote by $\ell_\gamma(Z)$ the ray $- \RR_{>0}Z(\gamma) \subset \CC^*$, for any given $\gamma \in \Gamma$ and $Z \in \Hom(\Gamma,\CC)$.

\begin{definition}\label{def:generic}
We say that $(Z,a) \in \Stab(\g)$ is \emph{generic} if when $a(\gamma), a(\eta) \neq 0$ and the rays $\ell_{\gamma}(Z)$, $\ell_{\eta}(Z)$ coincide, then $\bra \gamma, \eta \ket = 0$. We say that $(Z,a)$ is \emph{strongly generic} if $\ell_\gamma(Z) =  \ell_\eta(Z)$ with $a(\gamma) \neq 0 \neq a(\eta)$ imply that $\gamma$ and $\eta$ are linearly dependent.

Note that these conditions define open dense subsets of $\Stab(\g)$. We call the locus where the strongly generic condition does not hold the \emph{wall of marginal stability}.
\end{definition}
Given generic stability data $(Z,a)$ and a ray $-\ell \subset \Gamma_{\geqslant 0}$, the image of the group element $S_\ell$ via the adjoint representation on $\widehat{\g}$ admits the following expression
\begin{equation}\label{eq:stokesdata2}
\Ad S_\ell = \prod_{\gamma \in \Gamma^{\rm prim}, Z(\gamma) \in - \ell} \;\; \prod_{p \geq 1} T^{\Omega(p\gamma, Z)}_{p\gamma}.
\end{equation}
Here of course we write $T^{\Omega}_{\gamma}$ for the automorphism $\exp(-\Omega[\operatorname{Li}_2(e_{\gamma}), -])$. The point is that we do not need to specify an order for the previous product as the genericity condition implies that all the $T_{p\gamma}$ with $Z(\gamma) \in -\ell$ commute. 

Finally we can write \eqref{eq:wall-crossing} as an equivalent identity of Poisson automorphisms of $\widehat{\g}_V$, 
\begin{equation}\label{poisson1}
\prod^{\longrightarrow_{Z_{t_0 - \eps}}}_{\gamma \in \Gamma^{\rm prim}, \ell_{\gamma}(Z_{t_0 - \eps}) \subset V} \prod_{p \geq 1} T^{\Omega(p\gamma, Z_{t_0 - \eps})}_{p\gamma} = \prod^{\longrightarrow_{Z_{t_0 + \eps}}}_{\gamma \in \Gamma^{\rm prim}, \ell_{\gamma}(Z_{t_0 + \eps}) \subset V} \prod_{p \geq 1} T^{\Omega(p\gamma, Z_{t_0 + \eps})}_{p\gamma}.
\end{equation}
For this formula we assume that there is a single $t_0 \in [0, 1]$ for which the stability data is non-generic.

\noindent{\textbf{Example.}} A special case of \eqref{poisson1} appears in the case of the sublattice $\Gamma_0$ generated by elements $\gamma, \eta$ with $\bra \gamma, \eta \ket = 1$. Then, \eqref{poisson1} is equivalent to the ``pentagon identity''
\begin{equation*}
T_{\gamma} T_{\eta} = T_{\eta} T_{\gamma + \eta} T_{\gamma}.
\end{equation*}

\subsection{Symmetric stability data} The stability data on $\g$ coming from triangulated categories and physics are \emph{not} positive, bur rather satisfy the following condition.

\begin{definition}\label{symmetry} We say that $(Z,a) \in \Stab(\g)$ is \emph{symmetric} if $a(\gamma) = a(-\gamma)$ for all $\gamma \in \Gamma$.  
\end{definition}

The same condition must of course hold for the spectrum, $\Omega(\gamma) = \Omega(-\gamma)$. Under suitable conditions we can still give a meaning to the constructions in the present paper in the case of a symmetric spectrum $\Omega$ as formal power series in an auxiliary parameter $\tau$, by choosing a lift of the given symmetric stability data to the Lie algebra $\g[[\tau]]$.   
\begin{definition} For each $\alpha \in \Gamma$, $\Omega \in \Q$ and $p \in \N_{> 0}$ we define an element $T^{\Omega}_{\alpha, p} \in \Aut(\g[[\tau]])$ acting by 
\begin{equation*}
T^{\Omega}_{\alpha, p}(e_{\beta}) = e_{\beta}(1 - \tau^p e_{\alpha})^{\bra \alpha, \beta\ket \Omega}. 
\end{equation*} 
The automorphism $T^{\Omega}_{\alpha, p}$ preserves the Poisson Lie bracket on $\g[[\tau]]$.
\end{definition}
\begin{definition}\label{lift} Fix a continuous family of stability data on $\g$ parametrised by a topological space $\mathcal{U}$, and let $\Omega$ be the underlying (single or multi-valued) spectrum. A \emph{lift of the family to $\g[[\tau]]$} parametrised by an open subset $\mathcal{U}' \subset \mathcal{U}$ is a continuous family of stability data $\widetilde{a}(\alpha, Z) e_{\alpha}$ on $\g[[\tau]]$ such that on the generic locus in $\mathcal{U}'$ the group element corresponding to \eqref{eq:stokesdata2}, namely
\begin{equation*}
\exp_{D(\widehat{\g}_{V}[[\tau]])}\left(\sum_{Z(\alpha) \in - \ell} - \widetilde{a}(\alpha, Z) [ e_{\alpha}, -] \right) 
\end{equation*}
(where now $\widetilde{a}(\alpha, Z) \in \Q[[\tau]]$) has a factorisation
\begin{equation*}
\prod_{Z(\beta) \in -\ell}T^{\Omega(\beta, Z)}_{\beta, p(\beta, Z)}
\end{equation*}
for some function $p$ such that 
\begin{enumerate}
\item[$(i)$] for $\alpha \in \Gamma$, $k \in \Z$ we have $p(k \alpha, Z) = | k | p(\alpha, Z)$,
\item[$(ii)$] there is a norm on $\Gamma \otimes \C$ such that 
\begin{equation*} 
p(\alpha, Z) \geq  || \alpha ||
\end{equation*}
for all $\alpha \in \Gamma$ with $\Omega(\alpha, Z) \neq 0$. 
\end{enumerate}
\end{definition}
Condition $(i)$ will ensure that the certain formulae which hold in the positive case are still true for lifts to $\g[[\tau]]$, while $(ii)$ is a finiteness condition for $\tau$-graded components. We learned the idea of lifts of stability data to $\g[[\tau]]$ (as well as to more general artinian rings) from \cite{gps}.  
\begin{remark} It is an interesting question when such lifts exist for a family of stability data parametrised by a general $\mathcal{U}$. It is not hard to show that a lift always exists for the ``double" of a positive family of stability data, i.e. when we have a positive spectrum (so that central charges take values in a fixed strictly convex cone) $\Omega(\gamma, Z)$ parametrised by $Z \in \mathcal{U} \subset \Hom(\Gamma, \C)$, which we simply extend to be symmetric by the condition $\Omega(\gamma, Z) = \Omega( - \gamma, Z)$. The problem is what happens as the strictly convex cone opens up to a half-plane. In the categorical case of a 3CY triangulated category $\mathcal{C}$ lifts to $\g[[\tau]]$ are strictly related to the choice of a finite length heart of a bounded $t$-structure, and as the positive cone of central charges degenerates to a half-plane the heart may change (generically by a simple tilt).    
\end{remark}

\section{The connections $\nabla(Z)$ from stability data}\label{sec:gmn}

Suppose that we have a continuous family of stability data $(Z, a(Z))$ on $\g$, parametrised by some open set $\mathcal{U} \subset \Hom(\Gamma, \C)$, which satisfy the positivity property. Equivalently we have a spectrum $\Omega(\gamma, Z)$, $Z \in \mathcal{U}$, satisfying the wall-crossing formula and positivity. The aim of this Section is to prove an existence result for a family of connections whose generalised monodromy is prescribed by the spectrum $\Omega$. At the same time we will derive explicit formulae for flat sections (see \eqref{iterativeSol}) and connection matrix elements (see \eqref{explicitA}). We will recall along the way most of the technical terms involved (such as framed connections, Stokes data and isomonodromy), following \cite{Boalch3,bt_stab}. The existence result and explicit formulae will be used in the following sections to study the Bridgeland-Toledano Laredo connection as a scaling limit (Theorem \ref{thm:limit}), and most importantly to establish a connection with tropical geometry and the methods of \cite{gps} (Theorems \ref{thm:tropical1} and \ref{thm:tropical2}).

\begin{theorem}\label{prop:GMNconstruction} Let $\Omega$ be a spectrum giving a continuous family of stability data parametrised by an open subset $\mathcal{U} \subset \Hom(\Gamma, \C)$. Let $Z \in \mathcal{U}$ correspond to generic stability data.
\begin{enumerate} 
\item[(i)] If $\Omega$ is positive (Definition \ref{positivity}) then there exists a meromorphic framed connection $(\nabla(Z),g(Z))$ on $\PP^1$, with values in $D^*(\widehat{\g})$, of the form
\begin{equation*}
\nabla(Z) = d - \left(\frac{1}{z^2} \A^{(-1)}(Z) + \frac{1}{z}\A^{(0)}(Z) + \A^{(1)}(Z)\right)dz,
\end{equation*}
such that $(\nabla(Z),g(Z))$ has Stokes data (generalized monodromy) at $z = 0$ given by the rays $\ell_{\gamma}(Z)$ and factors $\Ad S_{\ell_{\gamma}(Z)}$ of \eqref{eq:stokesdata2}. The family $\nabla(Z)$ extends to a real analytic isomonodromic family of framed meromorphic connections (globally on $\PP^1$) on all of  $\mathcal{U}$. The derivations $\A^{(-1)}$ and $\A^{(1)}$ are related by the symmetry \eqref{conjugation} below.
\item[(ii)] If the spectrum $\Omega$ is symmetric instead (Definition \ref{symmetry}) then the same result holds on every open subset $\mathcal{U}' \subset \mathcal{U}$ on which we have a continuous lift to $\g[[\tau]]$. The corresponding $\nabla(Z)$ take values in $D^*(\g[[\tau]])$. 
\end{enumerate}
\end{theorem}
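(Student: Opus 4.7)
The plan is to follow the Riemann-Hilbert strategy pioneered in the GMN setting, producing $\nabla(Z)$ indirectly by first constructing a canonical system of $\Aut^*(\widehat{\g})$-valued flat sections $X(z, Z)$ on $\PP^1$ and then reading the connection off from the one-form $dX \cdot X^{-1}$. I would require $X$ to satisfy two conditions: a semi-flat asymptotic $X(z, Z) e_\alpha \sim e_\alpha \exp_*\bigl(z^{-1}Z(\alpha) + z\bar{Z}(\alpha)\bigr)$ as $z \to 0$ and $z \to \infty$ along admissible sectors, and prescribed jumps across each BPS ray $\ell_{\gamma, Z}$ equal to the Stokes factor $\Ad S_{\ell_{\gamma, Z}}$ of \eqref{eq:stokesdata2}. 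Rewriting this as a fixed-point equation on $\PP^1$, with a Cauchy-type kernel integrated along each ray against $- \log \Ad S_\ell$, casts the existence problem as a Picard iteration in $D^*(\widehat{\g})$.

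Under the positivity hypothesis all BPS rays lie in a common half-plane, which is exactly the input needed for the filtration by $\g_{>k}$ of section \ref{sec:positivestab} to make $D^*(\widehat{\g})$ pro-solvable. Picard iteration then terminates modulo $\g_{>k}$ after finitely many steps, and its output is naturally organised as a sum over rooted decorated trees $T$, each contributing an iterated integral $G_T(z, Z)$ weighted by a combinatorial factor $W_T(Z)$. Because the Lie bracket on $D^*(\widehat{\g})$ acts as a derivation of the commutative product $*$, this sum repackages into the closed exponential form $e_\alpha \exp_*(\ldots)$ announced in \eqref{intro:X}, and the matrix elements of $\nabla(Z)$ are extracted by differentiating $X$ in $z$.

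The two asymptotic conditions then force $dX \cdot X^{-1}$ to be a Laurent polynomial in $z$ of exactly the shape displayed in the theorem, with $\A^{(-1)}$ conjugate to $-Z$ through the natural frame $g(Z)$. The symmetry \eqref{conjugation} between $\A^{(-1)}$ and $\A^{(1)}$ arises from the invariance of the kernel and of the prescribed jumps under the involution $z \mapsto -1/\bar{z}$, $\gamma \mapsto -\gamma$, which swaps $0$ and $\infty$. Isomonodromy is tautological at this point: the Stokes data are by construction the $\Ad S_{\ell_{\gamma, Z}}$, and the Kontsevich-Soibelman identity \eqref{poisson1} asserts precisely that their clockwise product across any sector is independent of $Z$, so the generalized monodromy stays constant as $Z$ varies across $\mathcal{U}$ even through walls of marginal stability where rays collide and reorder.

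For part (ii) the rays of a symmetric spectrum no longer sit in a half-plane, so the iteration needs an extra filtration in order to terminate; the chosen lift to $\g[[\tau]]$ supplies it. Condition (i) in Definition \ref{lift} ensures that the Stokes factors $T^{\Omega}_{\beta, p(\beta, Z)}$ still multiply correctly and that the wall-crossing identity survives in $\g[[\tau]]$, while condition (ii) yields the norm bound making each coefficient of $\tau^n$ a finite sum over decorated trees, so the Picard iteration converges order-by-order in $\tau$ and produces a $D^*(\g[[\tau]])$-valued connection with the same formal structure as in (i). The main obstacle throughout is analytic rather than algebraic: one must verify integrability of the Cauchy kernel against the exponentially small semi-flat weights along each ray, convergence of the tree sum in $\widehat{\g}$ (respectively in each $\tau$-component), and real-analytic dependence on $Z$ across the walls of marginal stability, these last two points being the places where the pro-solvability hypothesis really does the work.
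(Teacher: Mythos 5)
Your proposal follows the same route as the paper: set up a Riemann--Hilbert factorisation problem for $X(z,Z)$ with prescribed jumps $\Ad S_{\ell}$, define an integral operator with a Cauchy-type kernel and the semi-flat ``base point'' $X^0 = e^L$, iterate, organise the iteration as a sum over rooted decorated trees, and read off $\nabla(Z) = d - (\partial_z X)X^{-1}\,dz$, with positivity feeding into the filtration by $\g_{>k}$ so that everything works in finite-dimensional nilpotent quotients. Two points, however, are not right and one is glossed over faster than the paper can afford to.

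The mechanism you give for the symmetry \eqref{conjugation} is not the one that works. You invoke ``invariance of the kernel and of the prescribed jumps under the involution $z \mapsto -1/\bar z$, $\gamma \mapsto -\gamma$,'' but in the positive case there simply is no $-\gamma$ in $\Gamma_{\geqslant 0}$, so $\gamma \mapsto -\gamma$ does not act on $\widehat\g$, and $\rho(z,z')$ is not invariant under $z\mapsto -1/\bar z$; this is borrowed from the physics reality condition but does not survive the restriction to a strictly convex cone. What the paper actually uses is much more elementary: the chosen kernel satisfies $\lim_{z\to 0}\rho(z,z') = \tfrac{1}{4\pi i}$ and $\lim_{z\to \infty}\rho(z,z') = -\tfrac{1}{4\pi i}$, so the two $\Aut^*(\widehat\g)$-valued limits $g(Z) = \lim_{z\to 0} Y$ and $Y_\infty = \lim_{z\to\infty} Y$ of $Y = X(X^0)^{-1}$ are related by the involution $e_\alpha \mapsto e_\alpha \exp_*\bra\alpha, -x\ket$ on the ``logarithm'' $x$; combined with $g^{-1}\cdot\A^{(-1)} = -Z$ and $Y_\infty^{-1}\cdot\A^{(1)} = \bar Z$, this gives precisely \eqref{conjugation}. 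Your claimed involutive symmetry would need to be replaced by this sign flip in the kernel limits.

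You declare isomonodromy ``tautological,'' but that is only half the truth. The Kontsevich--Soibelman identity guarantees that the Stokes data, where defined, are locally constant; it does not by itself produce the assertion that the connection form extends real-analytically to the non-generic locus. The paper proves this in a separate step: showing continuity of $X(z,Z)$ across the wall by a Liouville argument plus uniqueness of the fixed point of the limiting integral operator, and then upgrading to real analyticity by passing to a holomorphic extension $\widetilde X(z,Z,W)$ with $W$ an independent variable replacing $\bar Z$, so that the extension across a real-codimension-one bad locus follows from boundedness and Morera. You do flag ``real-analytic dependence across walls'' as one of the remaining obstacles, but this is not a routine analytic check; it is a genuine argument, and without something in its place the ``extends to a real analytic isomonodromic family on all of $\mathcal U$'' part of the statement is not yet proved.
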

\noindent The proof of this Theorem consists of several steps which are carried out in \ref{sec:gmn:sub:start} - \ref{sec:gmn:sub:finish} below. Our methods are different from those in \cite{bt_stab} and are very much inspired by \cite{gmn}. We will discuss the construction in detail in the case of positive stability data and sketch how this can be adapted to symmetric stability data in Section \ref{symmetricRemark}.

\subsection{Irregular connections on $\PP^1$}\label{subsec:btlgeneral}

Let $P$ be the holomorphically trivial, principal $\Aut^*(\widehat{\g})$-bundle on $\mathbb{P}^1$, that is the inverse limit of the system of holomorphically trivial principal bundles corresponding to the groups $\Aut^*(\g_{\leqslant k})$. 

A $D^*(\widehat{\g})$-valued meromorphic function $\A$ on $\PP^1$ is an inverse system of meromorphic functions $\A_{\leqslant k}$ with values in the finite-dimensional vector spaces $D^*(\g_{\leqslant k})$. For a choice of local coordinate $z$ in $\PP^1$, we will consider meromorphic connections on $P$ of the form
\begin{equation*}
\nabla = d - \A\,dz,
\end{equation*} 
given by the inverse limit of a system of meromorphic connections
\begin{equation}\label{eq:GMNformfin}
\nabla_{\leqslant k} = d - \A_{\leqslant k}dz.
\end{equation}
A local gauge transformation $Y\colon U \to \Aut^*(\g)$ is an inverse system of holomorphic maps $Y_{\leqslant k} \colon U_k \to \Aut^*(\g_{\leqslant k})$ on a chain of nonempty open subsets $\cdots U_{k+1} \subset U_k \cdots \subset U\subset \PP^1$. We use the standard notation
\begin{equation*}
Y \cdot \A = (\partial_z Y) Y^{-1} + Y \A Y^{-1}.
\end{equation*}
In the rest of the paper, we focus on connections $\nabla$ with a second order pole at $z = 0$ and simple dependence on $z$, of the form
\begin{equation}\label{eq:GMNform}
\nabla = d - \left(\frac{1}{z^2} \A^{(-1)} + \frac{1}{z}\A^{(0)} + \A^{(1)}\right)dz,
\end{equation}
where $\A^{(j)}\in D^*(\widehat{\g})$ are \emph{constant} in $z$. We choose the formal type (that is the gauge equivalence class under formal power series gauge transformations, see \cite{Boalch2}) at the origin to be
\begin{equation}\label{eq:formtype}
d + \frac{Z}{z^2} dz,
\end{equation}
with $Z \in \Hom(\Gamma,\CC)$ (regarded as an element of $D^*(\widehat{\g})$, acting by $Z(e_{\alpha}) = Z(\alpha)e_{\alpha}$). It will be convenient to work with the following notion (see \cite{Boalch2}).

\begin{definition} A (compatibly) framed connection is a pair $(\nabla,g)$ given by a connection $\nabla$ as above and an element $g \in \Aut^*(\widehat{\g})$ such that $g^{-1} \cdot \A^{(-1)} = - Z$.  
\end{definition}

We introduce now \emph{Stokes data} for a framed connection $(\nabla,g)$ following \cite{Boalch1,bt_stab}. We define an \emph{anti-Stokes ray} of $\nabla$ to be of the form $\ell_\gamma  = - \RR_{> 0} Z(\gamma)$ for $\gamma \in \Gamma \backslash \{0\}$, and say that a ray is \emph{admissible} if it is not an anti-Stokes ray (the set of anti-Stokes rays supporting a nontrivial Stokes factor need not be finite, as explained below). By definition, an admissible ray for $\nabla$ is admissible for each $\nabla_{\leqslant k}$. 

A flat section (or solution) $X \colon U \to \Aut^*(\widehat{\g})$ for $\nabla$ on an open $U \subset \PP^1$ is an inverse system of holomorphic flat sections $X_{\leqslant k} \colon U_k \to \Aut^*(\g_{\leqslant k})$ for $\nabla_{\leqslant k}$, that is, satisfying
\begin{equation*}
\partial_z X_{\leqslant k} = \A_{\leqslant k} X_{\leqslant k}
\end{equation*}
on a chain of nonempty open subsets $\cdots U_{k+1} \subset U_k \cdots \subset U$. The previous formula should be understood as acting on an arbitrary element of $\g_{\leqslant k}$, where we use the standard notation for the composition of maps. Note that a solution provides a local description of the connection
\begin{equation}\label{eq:conlocal}
\cA = (\partial_z X)X^{-1}.
\end{equation}
Given an admissible ray $\ell \subset \CC^*$ for $\nabla$, define $\mathbb{H}_\ell \subset \CC$ to be the open half-plane containing $\ell$ and with boundary perpendicular to $\ell$. Then, there exists a unique \emph{fundamental} or \emph{canonical} solution, defined on the \emph{fixed} half-plane $\mathbb{H}_\ell$,  
\begin{equation*}
X_\ell \colon \mathbb{H}_\ell \to \Aut^*(\widehat{\g})
\end{equation*}
with prescribed asymptotics $X_\ell e^{-Z/z} \to g$ as $z \to 0$ in $\mathbb{H}_\ell$ (compare with \cite[Th. 6.2]{bt_stab}). This follows from an analogous result for the corresponding system of connections $\nabla_{\leqslant k}$ (see \cite[Th. 3.1 and Lem. 3.3]{Boalch1}). The solution $X_\ell$ is called the \emph{canonical fundamental solution} of $(\nabla,g)$ corresponding to the admissible ray $\ell$.

Following \cite{bt_stab} we can use this result to define Stokes factors for $(\nabla, g)$. This requires a little care since the set of anti-Stokes rays supporting a nontrivial Stokes factor is not necessarily discrete. If $\ell_1 \neq - \ell_2$ are two admissible rays, ordered so that the closed sector $\overline{\Sigma} \subset \CC^*$ swept by clockwise rotation from $\ell_1$ to $\ell_2$ is convex, there is a unique element $S_{\ell_1,\ell_2} \in \Aut^*(\widehat{\g})$ such that on $\Sigma$ we have
\begin{equation*}
X_{\ell_2} = X_{\ell_1} S_{\ell_1,\ell_2}.
\end{equation*}
\begin{definition}
$(\nabla,g)$ admits a Stokes factor $S_\ell \in \Aut^*(\widehat{\g})$ along the anti-Stokes ray $\ell$ if for all $k$ the
elements $S_{\ell_1,\ell_2}$ tend to $S_\ell$ in $\g_{\leqslant k}$ as the admissible rays $\ell_1,\ell_2$ tend to $\ell$ in such a way that $\ell$ is always contained in the corresponding closed sector.
\end{definition}
Similarly to \cite[Prop. 6.3]{bt_stab} one can show that $\nabla$ admits a Stokes factor along any anti-Stokes ray, and the data of the formal type and Stokes factors at $z = 0$ characterize the gauge equivalence class in a punctured disc at $z = 0$. 

Our goal in this Section is to construct a family of connections as above with \emph{prescribed Stokes factors} $\Ad S_{\ell}$. The \emph{isomonodromy} (i.e. constant generalized monodromy) property is then precisely the wall-crossing formula \eqref{eq:wall-crossing}, and would hold automatically.

We start with a continuous family of positive elements $(Z, a(Z))$ of $\Stab(\g)$ parametrised by an open set $
\mathcal{U} \subset \Hom(\Gamma, \C)$. In particular all the rays $\ell_{\gamma}(Z) = - \R_{> 0} Z(\gamma)$ for $a(\gamma) \neq 0$ are contained in a fixed half-space $\mathbb{H}'$. 

\subsection{Riemann-Hilbert (RH) factorisation problem}\label{sec:gmn:sub:start} Following ideas of \cite{gmn}, the construction of $(\nabla(Z),g(Z))$ from generic $(Z, a(Z))$ can be turned into a Riemann-Hilbert factorisation problem, that is the construction of a sectionally holomorphic function (i.e. a holomorphic function with branch-cuts) 
\begin{equation*}
X(Z) = X(z, Z)\!: \C^* \to \Aut^*(\widehat{\g}),  
\end{equation*}
with prescribed jump across $\ell_{\gamma}(Z)$, $\gamma \in \Gamma$ given by the automorphism \eqref{eq:stokesdata2}. More precisely we seek a family $X(z, Z)$ of functions of $z$ taking values in $\Aut^*(\widehat{\g})$, parametrised by $Z$, with the following properties:
\begin{enumerate}
\item $X(Z)$ is an $\Aut^*(\widehat{\g})$-valued holomorphic function in the complement of the rays $\ell_{\gamma}(Z)$ with $a(\gamma) \neq 0$;
\item $X(Z)(e_{\alpha})$ extends to a holomorphic function with values in $\widehat{\g}$ in a neighborhood of $\ell_{\alpha}(Z)$;
\item fix a ray $\ell \subset \mathbb{H}'$. For every $z_0 \in \ell$ denote by $X(z^+_0)$ the limit of $X(z, Z)$ as $z \to z_0$ in the counterclockwise direction. Similarly let $X(z^-_0)$ denote the limit in the clockwise direction. Both limits exist, and they are related by
\begin{equation*}
X(z^+_0) = X(z^-_0) \Ad S_\ell;   
\end{equation*}
\item there exists $g(Z) \in \Aut^*(\widehat{\g})$ such that $\lim_{z \to 0} X(z, Z) e^{-Z/z} = g(Z)$ along directions non-tangential to anti-Stokes rays.

\end{enumerate} 

\begin{remark}\label{RHrmk}  
Condition (1) means that each element of the inverse system $X_{\leqslant k}(Z)$ should be holomorphic in the complement of the finite set of rays $\ell_{\gamma}(Z)$ for which the class of $a(\gamma)$ in $\g_{\leqslant k}$ is nonzero. Similar clarifications apply to the other conditions. In particular the limit in condition (4) is understood as taken in a finite-dimensional (but arbitrary) quotient $\g_{\leqslant k}$, i.e. we \emph{do not} require that the limit holds uniformly in $z$ for all $\g_{\leqslant k}$.
\end{remark}

Given a solution of the Riemann-Hilbert factorization problem $(1)$-$(4)$, we can construct the framed connection in the obvious way: $\nabla(Z)$ is given by formula \eqref{eq:conlocal} and $g(Z)$ by condition $(4)$. Note that the prescribed jump \eqref{eq:stokesdata2} across an anti-Stokes ray $\ell_{\gamma}(Z)$ is independent of $z$, so the jumps cancel out in the local expression \eqref{eq:conlocal} which thus patches over a collection of sectors between anti-Stokes rays to all $\CC^*$. Therefore it defines a meromorphic connection on $\PP^1$ with (possibly) poles at $z = 0,\infty$. Provided that the restriction of $X(Z)$ to sectors between anti-Stokes rays admits a suitable analytic continuation, continuity of the family of stability data will ensure that $(\nabla(Z),g(Z))$ is isomonodromic. In what follows, we will construct explicit solutions $X(Z)$ of the Riemann-Hilbert factorization problem and prove that the corresponding framed connections fulfill the requirements of Theorem \ref{prop:GMNconstruction}.

\subsection{Integral operator}\label{subsec:operator} A basic technique to solve Riemann-Hilbert factorization problems consists in finding fixed points for singular integral operators, involving integration along the jump contour (see e.g. \cite{musk, FIKN}). 

Recall that the stability data $(Z, a(Z))$ are equivalent to the data $(Z, \{\Omega(\gamma, Z)\})$. For each $Y \in \End(\widehat{\g})$ we have
\begin{equation}\label{preTBop}
Y\Ad S_\ell(e_\alpha) = Y(e_{\alpha}) * \prod_{Z(\gamma)\in - \ell}(1 - Y(e_{\gamma}))^{\Omega(\gamma) \bra \gamma, \alpha\ket}.
\end{equation}
This leads to consider integral operators, acting on suitable $\End(\widehat{\g})$-valued holomorphic functions $Y$, of the form
\begin{align}\label{TBop}
&\nonumber \mathcal{Z}[Y](z)(e_{\alpha})\\
& = e_{\alpha} \exp_*\left(L(\alpha) + \sum_{\gamma} \Omega(\gamma)\bra \gamma, \alpha\ket \int_{\ell_{\gamma}(Z)}\frac{dz'}{z'}\rho(z, z')\log_*(1 - Y(z')(e_{\gamma}))\right),
\end{align}
summing over all $\gamma \in \Gamma$ with $\Omega(\gamma) \neq 0$. Here $L(z, Z)$ takes values in $\Hom(\Gamma, \C)$ and $\rho(z, z')$ is a Cauchy-type integration kernel which must be chosen appropriately. This choice of $\mathcal{Z}$ should be compared to \cite[Eq. 5.13]{gmn} and will be ultimately justified by Lemma \ref{lem:RHsolution}. Following \cite{gmn} we will actually choose  
\begin{align*}
L(z, Z) &= \frac{Z}{z} + z \bar{Z},\\
\rho(z, z') &= \frac{1}{4\pi i}\frac{z' + z}{z' - z}.
\end{align*}
This choice of $L$ and $\rho$ is crucial for the solution of RH to provide a connection of the form \eqref{eq:GMNform}. 

We regard \eqref{TBop} as a formal expression for a moment. Notice that we have formally
\begin{equation*}
\mathcal{Z}[Y](e_{\alpha} * e_{\beta}) = \mathcal{Z}[Y](e_{\alpha}) * Z[Y](e_{\beta}),
\end{equation*}
so, when it is well defined, $\mathcal{Z}[Y]$ automatically preserves the commutative product, that is it maps the set of algebra endomorphisms to itself. We will soon prove that it actually maps to the set of algebra \emph{automorphisms}. Finally we will prove that a fixed point for $\mathcal{Z}$ exists and gives a solution to our Riemann-Hilbert problem (Lemma \ref{lem:RHsolution}).

We start by rewriting the integral operator in terms of the original stability data. For this, we set $\dt(\gamma) e_\gamma = - a(\gamma)$, that is
\begin{equation*}
\dt(\gamma') = \sum_{n > 0,\,n | \gamma'} \frac{\Omega(n^{-1}\gamma')}{n^2}.
\end{equation*}
\begin{remark} The notation DT reflects the way in which the ``BPS state counts" $\Omega$ are related to Donaldson-Thomas invariants; in the present case it is of course purely formal.
\end{remark}

\noindent Using that $Y(z)$ is an homomorphism for the commutative product $*$ on $\widehat \g$, we have
\begin{align*}  
\sum_{\gamma } \Omega(\gamma ) \log_*(1 - Y(z')(e_{\gamma})) \gamma &= \sum_{\gamma }\Omega(\gamma ) \sum_{k > 0} \frac{1}{k^2} Y(z')(e_{k \gamma}) k\gamma \\
&= \sum_{\gamma } Y(z')(e_{\gamma}) \dt(\gamma ) \gamma.
\end{align*}
Let us introduce the ``base point" function
\begin{equation*}
X^0 = e^L \colon \mathbb{C}^* \to \Aut^*(\widehat{\g}).
\end{equation*}
Then we can rewrite the action of $\mathcal{Z}$ as
\begin{equation}\label{eq:ZDT}
\mathcal{Z}[Y](z)(e_{\alpha}) = X^0(z)(e_{\alpha})\exp_*\left(\sum_{\gamma } \bra \gamma , \alpha \ket \dt(\gamma ) \int_{\ell_{\gamma }}\frac{d z'}{ z'}\rho( z,  z') Y(z')(e_{\gamma})\right).
\end{equation}
\begin{lem}\label{AutoLemma} Suppose $Y(z)$ is such that $\mathcal{Z}[Y](z)$ is well-defined. Then $\mathcal{Z}[Y](z)$ takes values in $\Aut^*(\widehat{\g})$.  
\end{lem}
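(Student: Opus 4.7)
Since the discussion right before the statement already shows that $\mathcal{Z}[Y](z)$ is a $\C$-algebra endomorphism of $(\widehat{\g},*)$, the only remaining point is invertibility. The approach I would take is to work level by level in the pro-nilpotent inverse system $\widehat{\g} = \lim_{\longleftarrow} \g_{\leqslant k}$ and verify that on each finite-dimensional $\g_{\leqslant k}$ the induced homomorphism is upper-triangular with respect to the $\Gamma_{\geqslant 0}$-grading, hence bijective.

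First I would rewrite \eqref{eq:ZDT} as the factorisation
\begin{equation*}
\mathcal{Z}[Y](z)(e_\alpha) = e^{L(\alpha)}\, e_\alpha * \exp_*\!\bigl(F_\alpha(z)\bigr), \qquad F_\alpha(z) = \sum_\gamma \bra \gamma, \alpha\ket\, \dt(\gamma) \int_{\ell_\gamma} \rho(z,z')\, Y(z')(e_\gamma)\,\frac{dz'}{z'}.
\end{equation*}
The key observation is that $F_\alpha(z)$ lies in the augmentation ideal of $\widehat{\g}$ generated by $\{ e_\beta : \beta \in \Gamma_{\geqslant 0} \setminus\{0\}\}$. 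Indeed, any contribution from $\gamma=0$ is killed by $\bra 0, \alpha\ket = 0$; for $\gamma \neq 0$ the positivity of $\Omega$ restricts the sum to $\gamma \in \Gamma_{\geqslant 0}\setminus\{0\}$, and the standing hypothesis that $\mathcal{Z}[Y](z)$ be well defined forces $\log_*(1 - Y(z')(e_\gamma))$ in \eqref{TBop} to make sense, which in turn requires $Y(z')(e_\gamma)$ to belong to the same augmentation ideal. Consequently $\exp_*(F_\alpha(z))$ lies in $1 + (\text{augmentation ideal})$, and therefore
\begin{equation*}
\mathcal{Z}[Y](z)(e_\alpha) = e^{L(\alpha)}\, e_\alpha + (\text{terms of degree strictly larger than }\deg\alpha).
\end{equation*}

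Projected to any quotient $\g_{\leqslant k}$, this shows that $\mathcal{Z}[Y](z)$ acts on the basis $\{e_\alpha\}$ triangularly with respect to the $\Gamma_{\geqslant 0}$-filtration, with nonvanishing scalar diagonal entries $e^{L(\alpha)}$; such a graded endomorphism of a finite-dimensional commutative algebra is automatically an automorphism, and inverting level by level produces the required two-sided inverse in $\Aut^*(\widehat{\g})$. I do not expect any serious obstacle here: the one technical point, namely verifying that the formal integral $F_\alpha(z)$ really converges \emph{inside} the augmentation ideal at each finite level (rather than producing a stray constant term), is forced by the support property \eqref{eq:support} for the underlying stability data together with the well-definedness hypothesis of the lemma.
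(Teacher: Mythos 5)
Your proof is correct, but it takes a different route from the paper's. The paper reduces the problem to inverting a mapping of the form $s_i \mapsto s_i\exp(\Phi_i(\mathbf{s}))$ in $m$ formal indeterminates and then cites the classical multivariate Lagrange inversion formula (Good's formula), postponing the explicit inverse to section \ref{subsec:explicitformula} where it is needed to write down the connection matrix. You instead make the filtration argument behind Lagrange inversion explicit: you observe that the integrand $F_\alpha(z)$ lands in the augmentation ideal (enforced by well-definedness of $\log_*(1- Y(z')(e_\gamma))$), so that $\mathcal{Z}[Y](z)(e_\alpha) = e^{L(\alpha)}\, e_\alpha + (\text{higher order})$, and then conclude level by level in $\g_{\leqslant k}$ that the map is filtered with invertible associated graded, hence bijective, hence an automorphism (the inverse of a bijective algebra homomorphism being automatically one). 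Your argument is more elementary and self-contained, at the cost of not producing the explicit inverse formula. The paper's choice is not accidental: the explicit form of $X^{-1}$ via Good's formula is exactly what feeds into Proposition \ref{cor:explicitA} later, so it is worth flagging that a purely abstract invertibility proof would force one to redo the Lagrange inversion computation there anyway.
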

\begin{proof} By \eqref{eq:ZDT} the function $Z[Y](z)$, if well defined, is an algebra endomorphism and has the form
\begin{equation*}
Z[Y](z)(e_{\alpha}) = e_{\alpha} \exp(\bra \alpha, x\ket)
\end{equation*}
for a fixed $x \in \Gamma\otimes \widehat{\g}$. We need to show this is invertible. After choosing a basis of $\Gamma$ the problem of inverting $Z[Y](z)$ thus becomes that of inverting a mapping of the form $s_i \mapsto s_i \exp(\Phi_i({\bf s}))$ for suitable formal power series $\Phi_i({\bf s})$ in $m$ indeterminates. By the classical multivariate Lagrange inversion formula (which we recall in Section \ref{subsec:explicitformula}, when we will discuss explicit formulae for the connection form) the inverse exists and has again the form $s_i \mapsto s_i \exp(\Psi_i({\bf s}))$ for certain formal power series $\Psi_i({\bf s})$.  
\end{proof}
To find fixed points for the operator $\mathcal{Z}$ we wish to iterate in the integral equation \eqref{TBop}. Initial points for this iteration are provided by the following definition.

\begin{definition}\label{def:admis}
An $\Aut^*(\widehat{\g})$-valued function $Y$ is called \emph{admissible} if
\begin{enumerate}
\item $Y$ is well-defined and holomorphic in the complement of the rays $\ell_{\gamma}(Z)$ with $a(\gamma) \neq 0$ (see Remark \ref{RHrmk});
\item the limit $\lim_{z \to 0,\infty} Y e^{-L}(z)(e_\alpha)$ exists in $\widehat{\g}$ for all $\alpha \in \Gamma_{\geqslant 0}$, in any direction non-tangential to $\ell_{\gamma}(Z)$ with $a(\gamma) \neq 0$;
\item $Y(e_\gamma)$ extends to a $\widehat{\g}$-valued holomorphic function in a neighborhood of $\ell_{\gamma}(Z)$ with the same property.
\end{enumerate}
\end{definition}
The holomorphic function $X^0 = e^L \colon \mathbb{C}^* \to \Aut^*(\widehat{\g})$ is trivially admissible. This will be our choice of initial point for the iteration. To run the iterative process we need the following.

\begin{lem}\label{lem:admis}
Assume that $(Z,a(Z))$ is strongly generic. If $Y\!: \C^* \to \Aut^*(\widehat{\g})$ is admissible then so is $\mathcal{Z}[Y]$.
\end{lem}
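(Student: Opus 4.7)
I would verify each of the three conditions defining admissibility (Definition \ref{def:admis}) for $\mathcal{Z}[Y]$, beginning with well-definedness. By positivity of the stability data, in each nilpotent quotient $\g_{\leqslant k}$ only finitely many $\gamma$ with $a(\gamma)\neq 0$ contribute to the sum in \eqref{eq:ZDT}. For each such $\gamma$ I would parametrise $\ell_{\gamma,Z}$ by $z' = -tZ(\gamma)$, $t\in(0,\infty)$, so that
\begin{equation*}
L(z',Z)(\gamma) = -t^{-1} - t|Z(\gamma)|^2;
\end{equation*}
the identity $Y(z')(e_\gamma) = e^{L(z',Z)(\gamma)}(Ye^{-L})(z')(e_\gamma)$ combined with admissibility of $Y$ then gives exponential decay of $Y(z')(e_\gamma)$ at both endpoints of the ray. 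Together with the uniform boundedness of $\rho(z,z')$ for $z$ in a compact subset disjoint from $\ell_{\gamma,Z}$, this yields absolute convergence of each integral, as well as the majorants used in the steps below.

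Condition (1), holomorphy of $\mathcal{Z}[Y](z)$ away from the jump rays, then follows by differentiation under the integral sign (the derivative kernel $\partial_z \rho(z,z')$ remains integrable against the exponentially decaying $Y(z')(e_\gamma)$ on compacta). Condition (2) is verified by dominated convergence: as $z\to 0$ (respectively $z\to\infty$) in any direction non-tangential to the relevant rays, the kernel $\rho(z,z')$ tends pointwise on each $\ell_{\gamma,Z}$ to the constant $\tfrac{1}{4\pi i}$ (respectively $-\tfrac{1}{4\pi i}$), and the majorants above apply uniformly, giving a limit for $\mathcal{Z}[Y]e^{-L}(e_\alpha) = e_\alpha \exp_*(\cdots)$ in each $\g_{\leqslant k}$ for every $\alpha \in \Gamma_{\geqslant 0}$.

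The main obstacle, and the point where the strongly generic hypothesis is essential, is condition (3): one must show that $\mathcal{Z}[Y](e_\gamma)$ extends holomorphically across its own ray $\ell_{\gamma,Z}$. By the Sokhotski--Plemelj jump formula applied to the Cauchy-type kernel $\rho$, the only summand in the exponent of $\mathcal{Z}[Y](e_\gamma)$ that can develop a jump across $\ell_{\gamma,Z}$ is an integral along a ray $\ell_{\eta,Z}$ coinciding with $\ell_{\gamma,Z}$ and with $a(\eta)\neq 0$. Strong genericity (Definition \ref{def:generic}) forces any such $\eta$ to be linearly dependent on $\gamma$ in $\Gamma$, so by skew-symmetry of $\bra -,-\ket$ the coefficient $\bra \eta,\gamma\ket\mathrm{DT}(\eta)$ in \eqref{eq:ZDT} vanishes. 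Hence all potentially jumping contributions carry zero weight, and $\mathcal{Z}[Y](e_\gamma)$ extends holomorphically to a neighbourhood of $\ell_{\gamma,Z}$; the non-tangential limits at $0$ and $\infty$ on this extended domain follow from the same dominated convergence argument as in condition (2), completing the verification of admissibility.
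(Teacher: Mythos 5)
Your proposal is correct and follows essentially the same route as the paper's own proof: project into each nilpotent quotient $\g_{\leqslant k}$ so only finitely many rays contribute, use the exponential decay coming from admissibility of $Y$ (essentially the Bessel-type estimates of section \ref{BesselLemma}) to establish absolute convergence and holomorphy, observe that $\rho(z,z')\to\pm\tfrac{1}{4\pi i}$ as $z\to 0,\infty$ to get condition (2), and invoke the genericity of $(Z,a)$ to make the coefficient $\bra\eta,\gamma\ket$ vanish for the integrals along rays coinciding with $\ell_{\gamma,Z}$, which gives condition (3). The only superficial difference is that you derive $\bra\eta,\gamma\ket=0$ from linear dependence under strong genericity (plus skew-symmetry), whereas the paper invokes the genericity condition directly; both yield the same conclusion and both, like the paper's proof, tacitly apply this only when $a(\gamma)\neq 0$.
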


\subsection{Basic estimates}\label{BesselLemma} The Lemma follows from elementary estimates which we now establish (its proof is given at the end of the present Section). For later applications we will also allow a scaling parameter $R > 0$. The estimates needed for the proof of Lemma \ref{lem:admis} are obtained setting $R = 1$. We need to study the integral 
\begin{equation}\label{basicIntegral}
\int_{\R_{< 0} e^{i\psi} c} \frac{d z'}{z'}\frac{z' + z}{ z' - z} \exp(R( z'^{-1}c +  z' \bar{c}))
\end{equation}
where $c\in \C^*$, $\psi$ is a sufficiently small angle, and $z \notin \R_{< 0} e^{i\psi} c$. We also look at the integral along an arc,
\begin{equation}\label{arcIntegral}
\int_{|\psi|<\eps} \frac{d z'}{ z'}\frac{ z'+ z}{ z'- z} \exp(  R ( z'^{-1} c +  z' \bar{c}))
\end{equation}
where $ z' \in \R_{< 0} e^{i\psi} c$, $| z'|$ is fixed, $ z \notin \R_{< 0} c$ and $\eps$ is small enough. We claim that \eqref{basicIntegral} converges for sufficiently small $|\psi|$, and in fact its modulus is bounded above by 
\begin{equation*}
\frac{C}{2 R |c|}\exp(-2 R |c|)
\end{equation*} 
for $R$ large enough (for a constant $C$ depending on the angular distance of $z$ from $\R_{< 0} e^{i\psi} c$). This follows from the change of variable $ z' = -e^{s + i \psi} c$ for real $s$ and $\psi$, reducing \eqref{basicIntegral} to
\begin{equation*}
\int^{+\infty}_{-\infty} ds \frac{-e^{s + i\psi} c + \z}{-e^{s + i\psi} c - \z}\exp(- R|c|(e^{-s - \log|c| - i\psi} + e^{s + \log|c| + i\psi}))
\end{equation*}
The modulus of this is bounded above by 
\begin{equation*}
C \int^{+\infty}_{-\infty} ds |\exp(-2 R |c| \cosh(s + \log|c|+ i\psi))|,
\end{equation*} 
where $C$ is a constant depending on the angular distance of $\z$ from $\R_{< 0} e^{i\psi} c$. For $\psi$ sufficiently small this is in turn bounded by 
\begin{equation*}
C \int^{+\infty}_{-\infty} ds' \exp(-2 R |c| \cosh(s')),
\end{equation*}
where $C$ is a new (possibly larger) constant, depending also on $\psi$. We recognize the integral as a Bessel function, and we find that \eqref{basicIntegral} converges for $|\psi| \ll 1$, and moreover it is actually bounded above by $\frac{C}{2  R |c|}\exp(-2  R |c|)$ for $R$ large enough, as required (alternatively this upper bound can be quickly derived e.g. the Laplace approximation method). 

Similarly with the same change of variable \eqref{arcIntegral} becomes
\begin{equation*}
i \int^{+\eps}_{-\eps} d\psi \frac{-e^{s + i\psi}c + \z}{-e^{s + i\psi}c - \z}\exp(-  R|c| e^{-s - \log|c| - i\psi})\exp(-  R|c| e^{s + \log|c| + i\psi}). 
\end{equation*}
One can check that this vanishes for $s \to \pm \infty$ (for fixed, sufficiently small $\eps$). 
 
\begin{proof}[Proof of Lemma \ref{lem:admis}]
First we show that for admissible $Y$, $\alpha \in \Gamma_{\geqslant 0}$, $z \in \C^*$ away from the rays $\ell_{\gamma}$ with $a(\gamma) \neq 0$ the right hand side of \eqref{TBop} is a well defined element of $\widehat{\g}$. Note that $z$ lies away from the rays of integration in \eqref{eq:ZDT}, and hence the function $(z')^{-1}\rho(z,z')$ is holomorphic on each $\ell_{\gamma}$ appearing in that formula. For fixed $k > 0$ the projection of $\mathcal{Z}[Y](e_\alpha)$ on $\g_{\leqslant k}$ involves finitely many $\gamma$.  We claim that all the
integrals appearing are convergent. To see this we first expand $\log_*(1 - Y(e_\gamma))$ and use the boundary conditions in Definition \ref{def:admis} (2) to see that each integral
is dominated by the sum of a fixed finite number of integrals of the form
\begin{equation*}
C \int_{\ell_{\gamma}(Z)} \frac{d z'}{z'}\frac{z' + z}{ z' - z} \exp( z'^{-1}Z(\gamma) +  z' \overline{Z(\gamma)})
\end{equation*}
for some constant $C$. By the basic estimates for \eqref{basicIntegral}, these are all convergent. Then it follows from standard theory that $\mathcal{Z}[Y](e_\alpha)$ is a holomorphic function of $z \in U$ (since $(z')^{-1}\rho(z,z')$ is, and by the above convergence). On the other hand it is also holomorphic in a neighborhood of $\ell_{\alpha}(Z)$ since by the genericity property for $Z$ the integral along $\ell_{\gamma}(Z) = \ell_{\alpha}(Z)$ appears weighted by a factor of $\langle\gamma,\alpha\rangle$ and therefore vanishes. Finally we
can use the basic estimate for \eqref{basicIntegral} to take the limit 
$
\lim_{z \to 0} \mathcal{Z}[Y] e^{-L}(z)(e_\alpha)
$ 
in a direction non-tangential to $\ell_{\gamma}(Z)$ with $a(\gamma) \neq 0$: by the definition of $L$ this is the constant element of $\widehat{\g}$ given by
\begin{equation*}
e_{\alpha} \exp_*\left(\frac{1}{4\pi i}\sum_{\gamma} \Omega(\gamma)\bra \gamma, \alpha\ket \int_{\ell_{\gamma}(Z)}\frac{dz'}{z'}\log_*(1 - Y(z')(e_{\gamma}))\right)
\end{equation*}
The same argument applies to the $z \to \infty$ limit.
\end{proof} 
 
\subsection{Application of Plemelj's theorem} The link between the integral operator $\mathcal{Z}$ and the Riemann-Hilbert problem follows from standard theory and relies on a result from elementary complex analysis (Plemelj's theorem, see e.g. \cite{musk}). This says that if $\ell \subset \C$ is a smooth (oriented) arc and $f(z', z)$ is uniformly H\"older continuous and integrable for $z' \in \ell$ and all $z$ then fixing $z_0 \in \ell$ we have
\begin{equation}\label{PlemeljThm}
\lim_{z \to z^{\pm}_0} \frac{1}{2\pi i}\int_{\ell} \frac{ f(z', z)}{z' - z} dz' = \pm \frac{1}{2}f(z_0, z_0) + \frac{1}{2\pi i} \operatorname{pv} \int_{\ell} \frac{ f(z', z_0)}{z' - z_0} dz'
\end{equation}
where $z \to z^+_0$ ($z \to z^-_0$) denotes the limit taken in the region to the left (respectively to the right) of $\ell$ and the symbol $\operatorname{pv}$ denotes the (well defined, convergent) principal value of the (divergent) Cauchy integral.

Suppose now $Y(z)$ is an admissible function and $(Z,a(Z))$ is generic. Fix a ray $\ell$ in our positive half-space $\mathbb{H}'$ and a point $z_0 \in \ell$, and denote by $\mathcal{Z} [Y](z^+_0)(e_{\alpha})$ the limit of $\mathcal{Z}[Y](z)(e_{\alpha})$ as $ z \to  z_0$ in the counterclockwise direction. Similarly let $\mathcal{Z} [Y](z^-_0)(e_{\alpha})$ denote the limit in the clockwise direction.

\begin{lem}\label{JumpLemma}
Both limits exist, and they are related by
\begin{equation*} 
\mathcal{Z} [Y](z^+_0)(e_{\alpha}) = \mathcal{Z} [Y](z^-_0)(e_{\alpha}) \prod_{Z(\gamma)\in - \ell}(1 - Y(z_0)(e_{\gamma}))^{\Omega(\gamma) \bra \gamma, \alpha\ket}. 
\end{equation*}
\end{lem}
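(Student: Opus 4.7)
The plan is to reduce the statement to the classical Plemelj--Sokhotski jump formula applied to a Cauchy-type singular integral. First I would rewrite the kernel by partial fractions in $z'$:
$$\frac{\rho(z,z')}{z'} = \frac{1}{4\pi i}\cdot\frac{z'+z}{z'(z'-z)} = \frac{1}{2\pi i}\cdot\frac{1}{z'-z} - \frac{1}{4\pi i z'}.$$
The second summand is $z$-independent and contributes no jump, so the behaviour of each integral
$$I_\gamma(z) \defeq \int_{\ell_{\gamma,Z}}\frac{dz'}{z'}\rho(z,z')\log_*(1-Y(z')(e_\gamma))$$
as $z$ crosses $\ell$ at $z_0$ is controlled by a standard Cauchy integral along $\ell_{\gamma,Z}$.

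Next I would observe that for $\gamma$ with $\ell_{\gamma,Z}\neq \ell$ the contour of integration stays away from a neighbourhood of $z_0$, so $I_\gamma$ extends holomorphically across $\ell$ near $z_0$ and no jump arises. Thus (inside the exponent of $\mathcal{Z}[Y]$) only the finitely many rays with $Z(\gamma)\in -\ell$, all coinciding with $\ell$, contribute to the difference $\mathcal{Z}[Y](z_0^+)(e_\alpha)-\mathcal{Z}[Y](z_0^-)(e_\alpha)$. For each such $\gamma$, writing $\phi_\gamma(z')\defeq \log_*(1-Y(z')(e_\gamma))$ (which is holomorphic in a neighbourhood of $z_0$ by admissibility of $Y$, Definition \ref{def:admis}), the Plemelj formula applied to $\frac{1}{2\pi i}\int_\ell \frac{\phi_\gamma(z')\,dz'}{z'-z}$ yields
$$I_\gamma(z_0^+) - I_\gamma(z_0^-) = \log_*(1-Y(z_0)(e_\gamma)),$$
once one verifies that the convention ``counterclockwise limit $\equiv +$'' agrees with ``left of $\ell$, oriented outward from $0$'' in the usual statement of Plemelj.

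The final step is to assemble the jumps of the individual $I_\gamma$ into the jump of $\mathcal{Z}[Y]$ itself. Since by \eqref{TBop}
$$\mathcal{Z}[Y](z)(e_\alpha) = e_\alpha\,\exp_*\Bigl(L(\alpha) + \sum_\gamma \Omega(\gamma)\langle\gamma,\alpha\rangle\, I_\gamma(z)\Bigr),$$
the base-point factor $e_\alpha \exp_*(L(\alpha))$ is continuous at $z_0$ and $\exp_*$ is multiplicative with respect to $*$, so
$$\mathcal{Z}[Y](z_0^+)(e_\alpha) = \mathcal{Z}[Y](z_0^-)(e_\alpha)\,\exp_*\!\Bigl(\sum_{Z(\gamma)\in -\ell}\Omega(\gamma)\langle\gamma,\alpha\rangle\log_*(1-Y(z_0)(e_\gamma))\Bigr),$$
which is exactly the claimed $*$-product formula after rewriting $\exp_*$ of the sum of $\log_*$s as a product.

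The main obstacle is the fact that the integrands take values in the infinite-dimensional algebra $\widehat{\g}$ rather than being numerical: one has to verify that the Plemelj argument (and the holomorphy away from $\ell_{\gamma,Z}$) applies term-by-term in the grading, carried out in each nilpotent quotient $\g_{\leqslant k}$ and then passed to the inverse limit, as in Remark \ref{RHrmk}. The existence of the boundary values $I_\gamma(z_0^\pm)$ ultimately rests on the estimates of section \ref{BesselLemma} applied to the tail of the contour at $0$ and $\infty$, combined with the standard boundedness of Cauchy singular integrals with H\"older (here holomorphic) density, which is available because admissibility of $Y$ guarantees that $\phi_\gamma$ is holomorphic near $z_0$ and decays appropriately at the endpoints of $\ell_{\gamma,Z}$.
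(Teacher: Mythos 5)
Your proof is correct and follows essentially the same route as the paper: both rest on Plemelj's theorem applied to the Cauchy-type integral, with only the rays satisfying $\ell_{\gamma,Z}=\ell$ contributing and the exponential converting the additive jump of $\log_*(1-Y(z_0)(e_\gamma))$ into the multiplicative factor $(1-Y(z_0)(e_\gamma))^{\Omega(\gamma)\langle\gamma,\alpha\rangle}$. The only (purely cosmetic) difference is that you isolate the Cauchy kernel by partial fractions before applying Plemelj, whereas the paper expands $\log_*$ as a power series and applies Plemelj term-by-term with the full kernel $\tfrac{z'+z}{z'(z'-z)}$; both are valid and equivalent.
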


\begin{proof}
Note first that $Y(z_0)(e_{\gamma})$ in \eqref{JumpLemma} is well-defined because $Y$ is admissible. Fix $\gamma \in \Gamma_{> 0}$. We claim that Plemelj's theorem gives 
\begin{align}\label{JumpLemmaIdentity}
\nonumber \lim_{ z \to z^{\pm}_0}\frac{1}{4\pi i}\int_{\ell_{\gamma }}\frac{d z'}{ z'}\frac{ z' +  z}{ z' -  z}\log(1 - Y(z')(e_{\gamma}) ) &= \pm \frac{1}{2}\log(1 - Y(z_0)(e_{\gamma}))\\ 
&+ \frac{1}{4\pi i}\operatorname{pv}\int_{\ell_{\gamma }}\frac{d z'}{ z'}\frac{ z' +  z_0}{ z' - z_0}\log(1 -  Y(z') (e_{\gamma})),
\end{align}
where the last principal value integral is convergent. Equation \eqref{JumpLemma} follows at once from this identity. To establish it recall that $\log(1 - Y(z')(e_{\gamma}) )$ must be interpreted as an element of $\widehat{\g}$, i.e. a formal power series. We have indeed 
\begin{equation*}
\frac{1}{4\pi i} \int_{\ell_{\gamma}}\frac{d\z'}{\z'}\frac{ z' +  z}{ z' -  z}\log(1 - Y(z')(e_{\gamma})) = \sum_{k \geq 1} \frac{1}{4\pi i}\int_{\ell_{\gamma}}dz'\frac{1}{z' - z}\left(1 + \frac{z}{z'}\right)\frac{1}{k}(Y(z')(e_{\gamma}))^k,
\end{equation*}
and applying \eqref{PlemeljThm} to each summand, choosing in turn
\begin{equation*}
f(z') = \frac{1}{2} \left(1 + \frac{z}{z'}\right)\frac{1}{k}(Y(z')(e_{\gamma}))^k
\end{equation*}
shows that the left hand side of \eqref{JumpLemmaIdentity} equals the well defined formal power series 
\begin{equation*}
\pm \frac{1}{2}\sum_{k \geq 1}  \frac{1}{k} (Y(z_0)(e_{\gamma}))^k + \frac{1}{4\pi i} \sum_{k\geq 1} \operatorname{pv} \int_{\ell_{\gamma}}dz'\frac{1}{z' - z_0}\left(1 + \frac{z_0}{z'}\right)\frac{1}{k}(Y(z')(e_{\gamma}))^k.
\end{equation*}
This is precisely the right hand side of \eqref{JumpLemmaIdentity}.
\end{proof}

\subsection{Fixed point}\label{sec:GMN:sub:fixed} Following \cite[App. C]{gmn}, we construct now a solution of the singular integral equation 
\begin{equation}\label{TBA}
X = \mathcal{Z}[X], 
\end{equation}
by iteration from $X^0 = e^L$, and show that it solves the Riemann-Hilbert factorization problem. Recall that $L(z) = z^{-1}Z + z \overline{Z}$ and hence the solution will depend (in a complicated way) on the parameter $Z$.

Set $\mathcal{Z}^{(i)} = \mathcal{Z} \circ \cdots \circ \mathcal{Z}$ ($i$ times) and consider the sequence of algebra automorphisms 
$X^{(i)} = \mathcal{Z}^{(i)}[X^0]$
for $i \geq 0$. We claim that $X^{(i)}(z)$ converges as $i \to \infty$.  
This follows from an explicit calculation. It follows from \eqref{eq:ZDT} that 
\begin{align}\label{TBinduction}
\nonumber X^{(i)}( z)(e_{\alpha}) &= X^0(z)(e_{\alpha}) \\
&\sum_{\mathbf{k}}\frac{1}{\mathbf{k}!} \sum_{\{\gamma_1, \ldots, \gamma_l\}}\prod_j \left(\bra \dt(\gamma_j)\gamma_j, \alpha \ket \int_{\ell_{\gamma_j}}\frac{d z'}{ z'}\rho( z,  z') X^{(i-1)}(z')(e_{\gamma_j}) \right)^{\mathbf{k}_j}, 
\end{align}
where we sum over ordered partitions $\mathbf{k}$ of arbitrary length $l$, and we take the product over all unordered collections of pairwise distinct $\{\gamma_1, \ldots, \gamma_l\}\subset\Gamma$. Let us denote by $T$ a connected rooted tree, with set of vertices $T^0$ and set of edges $T^1$. We assume that the vertices of $T$ are decorated by $\Gamma$, i.e. there is a map $\alpha\colon T^0 \to \Gamma$, $v \to \alpha(v)$. Introduce a factor 
\begin{equation*}
\W_{\T} = \frac{\dt(\gamma_{\T}) \gamma_{T}}{|\Aut(\T)|} \prod_{v \to w} \bra \dt(\alpha(v))\alpha(v), \alpha(w)\ket,
\end{equation*}
where $\gamma_{\T}$ denotes the label of the root of $\T$, and $\Aut(\T)$ is the automorphism group of $\T$ as a decorated, rooted tree. To each $\T$ we also attach a ``propagator" $\G_{\T}(z, Z)$ which is a $\g$-valued holomorphic function in the complement of those $\ell_{\gamma}$ rays with $a(\gamma) \neq 0$, and is defined inductively by
\begin{equation}\label{propagators}
\G_{\T}(\z, Z) = \int_{\ell_{\gamma_{\T}}} \frac{d\z'}{\z'}\rho(\z, \z') X^0(z', Z)(e_{\gamma_{\T}}) \prod_{\T'}\G_{\T'}(\z', Z),
\end{equation}
where $\{\T'\}$ denotes the set of (connected, rooted, decorated) trees obtained by removing the root of $\T$ (setting $\G_{\emptyset}(\z) = 0$, but with the convention that an empty product equals $1$). We claim that 
\begin{equation}\label{TBinduction2}
X^{(i)}(z)(e_{\alpha}) = X^0(e_{\alpha})\sum_{{\bf h}} \frac{1}{{\bf h}!} \sum_{\{\T_1, \ldots, \T_m\}} \prod_r \left( \bra \W_{\T_r} \G_{\T_r}(z), \alpha \ket\right)^{{\bf h}_r},  
\end{equation} 
where we sum over all partitions ${\bf h}$ with arbitrary length $m$ and all collections of $m$ pairwise distinct trees $\{\T_1, \ldots, \T_m\}$ as above, each of which has depth at most $i - 1$. (Notice that $\W_{\T_r} \G_{\T_r}(\z)$ lies in $\Gamma\otimes\g$, and we have extended the pairing $\bra - , - \ket$ by $\g$-linearity). The claim can be proved by induction on $i$. It holds for $i = 1$ by inspection. Assuming \eqref{TBinduction2} holds for a given $i-1$ we see by  \eqref{TBinduction} that $X^{(i)}(z)(e_{\alpha})$ can be written as  
\begin{align*}
&X^0(z)(e_{\alpha}) \sum_{\mathbf{k}} \frac{1}{\mathbf{k}!} \sum_{\{\gamma_1, \ldots, \gamma_l\}} \\
&\prod_j \left(\bra \dt(\gamma_j)\gamma_j, \alpha \ket \int_{\ell_{\gamma_j}}\frac{d z'}{ z'}\rho( z,  z') X^0(z')(e_{\gamma_j})\sum_{{\bf h}} \frac{1}{{\bf h}!} \sum_{\{T_1, \ldots, T_r\}} \prod_r \left( \bra \alpha, \W_{\T_r} \G_{\T_r}(z)\ket\right)^{{\bf h}_r}\right)^{\mathbf{k}_j}.
\end{align*}  
For each collection $\{T_1, \ldots, T_r\}$ and each $j$ we can define a new tree $T'_j$, with length at most $i$, by gluing a common root decorated by $\gamma_j$. We then recognise the quantity in large parentheses as the pairing $\bra W_{T'_j} G_{T'_j}(z),\alpha \ket$, and the sum above as
\begin{equation*}
X^0(z)(e_{\alpha}) \sum_{\mathbf{k}} \frac{1}{\mathbf{k}!} \sum_{\{T'_{1}, \ldots, T'_{j}\}}(\bra W_{T'_j} G_{T'_j}(z), \alpha \ket  )^{\mathbf{k}_j}
\end{equation*}
as required. 

Recalling the definition of an $\Aut^*(\widehat{\g})$-valued holomorphic function, equation \eqref{TBinduction2} shows that the sequence $X^{(i)}$ converges for $i \to \infty$ to a limit $X = X(z, Z)$, which is a solution of \eqref{TBA}, given explicitly by 
\begin{equation}\label{iterativeSol}
X(z, Z)(e_{\alpha}) = X^0(z, Z)(e_\alpha) \exp \bra \alpha, -\sum_{T} W_{T}(Z)\G_{T}(z, Z)\ket, 
\end{equation}   
where we sum over arbitrary (decorated, rooted) trees (and use $\bra x, \alpha\ket = \bra \alpha, - x\ket$ for all $x\in \Gamma$). This is the analogue of \cite{gmn} equation (C.26). Our discussion so far can be summarised in the following result.

\begin{lem}\label{lem:RHsolution}
The fixed point $X(z, Z)$ defined by \eqref{iterativeSol} is an algebra automorphism which solves RH, i.e. it satisfies the conditions (1) - (4) of Section \ref{sec:gmn:sub:start}. The automorphism $g(Z) = \lim_{z \to 0} X e^{-Z/z}$ of condition (4) is given by
\begin{equation}\label{eq:gZ}
g(Z)(e_{\alpha}) = e_\alpha * \exp \bra \alpha, -\sum_{T} W_{T}\G^0_{T}(Z)\ket, 
\end{equation}
where $\G^0_{T}$ is defined by
\begin{equation}\label{propagatorsgZ}
\G^0_{\T}(Z) = \frac{1}{4\pi i}\int_{\ell_{\gamma_{\T}}} \frac{d\z'}{\z'} X^0(z', Z)(e_{\gamma_{\T}}) \prod_{\T'}\G_{\T'}(\z', Z),
\end{equation}
\end{lem}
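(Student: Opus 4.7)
My plan is to verify that the explicit fixed point $X(Z)$ constructed in \eqref{iterativeSol} satisfies conditions (1)--(4) of the Riemann--Hilbert problem set up in Section \ref{sec:gmn:sub:start}, and then to read off the formula \eqref{eq:gZ} for $g(Z)$ by computing the limit as $z\to 0$ directly from the tree expansion. The construction has in fact been engineered so that each condition follows from material already in place; the task is to assemble these pieces correctly.

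For conditions (1)--(3), I would argue as follows. The initial datum $X^{(0)} = X^0 = e^L$ is admissible in the sense of Definition \ref{def:admis}, and Lemma \ref{lem:admis} shows that $\mathcal{Z}$ preserves admissibility, so every iterate $X^{(i)} = \mathcal{Z}^{(i)}[X^0]$ is admissible. Since the iterates stabilise modulo $\g_{>k}$ after finitely many steps (the tree expansion \eqref{iterativeSol} on each graded piece involves only finitely many trees once one fixes $k$), the limit $X(Z)$ inherits admissibility, giving conditions (1) and (2). For the jump condition (3), one applies the Plemelj computation \eqref{JumpLemma} to the fixed point identity $X = \mathcal{Z}[X]$: across an anti-Stokes ray $\ell$ the right-hand side of \eqref{TBop} acquires the prescribed multiplicative jump $\prod_{Z(\gamma)\in -\ell}(1-X(z_0)(e_\gamma))^{\Omega(\gamma)\langle\gamma,\alpha\rangle}$, which via \eqref{preTBop} is exactly the action of $\Ad S_\ell$ from \eqref{eq:stokesdata2}.

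For condition (4) and formula \eqref{eq:gZ}, I would multiply \eqref{iterativeSol} by $e^{-Z/z}$ and pass to the limit. Since $X^0(z,Z)(e_\alpha)e^{-Z(\alpha)/z} = e_\alpha\exp(z\bar Z(\alpha))$ tends to $e_\alpha$ as $z\to 0$ along any direction not tangent to an anti-Stokes ray, it remains to evaluate $\lim_{z\to 0}\G_T(z,Z)$ for each decorated rooted tree $T$. From \eqref{propagators} and the fact that $\rho(z,z') = \tfrac{1}{4\pi i}\tfrac{z'+z}{z'-z}\to \tfrac{1}{4\pi i}$ uniformly on compact subsets of each ray of integration away from $z'=0$, one obtains termwise the integral \eqref{propagatorsgZ} defining $\G^0_T(Z)$. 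Combined with the uniform estimate from \eqref{basicIntegral} established in Section \ref{BesselLemma}, which controls the tails of each integral and justifies passage to the limit under the integral, this yields the formula
\begin{equation*}
g(Z)(e_\alpha) = e_\alpha \exp_*\bigl\langle \alpha,\, -\sum_T W_T\,\G^0_T(Z)\bigr\rangle,
\end{equation*}
as claimed.

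The step I expect to require the most care is the interchange of the $z\to 0$ limit with the infinite sum over trees in \eqref{iterativeSol}. This is not an analytic difficulty but a bookkeeping one: everything must be organised modulo $\g_{>k}$, so that each graded component only receives contributions from finitely many trees of bounded depth, and on each such piece the interchange is legitimate by the basic estimates of Section \ref{BesselLemma}. Once this is set up, convergence of the full series to an element of $\Aut^*(\widehat\g)$ follows from the definition of $\widehat\g$ as an inverse limit, and the same bookkeeping simultaneously verifies that the limit $g(Z)$ lies in $\Aut^*(\widehat\g)$ rather than just in $\End^*(\widehat\g)$.
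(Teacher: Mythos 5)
Your proposal is correct and follows essentially the same route as the paper's (very terse) proof: project into each finite-dimensional quotient $\Aut^*(\g_{\leqslant k})$, invoke the admissibility and Plemelj lemmas for conditions (1)--(3), and obtain (4) with formula \eqref{eq:gZ} from the basic Bessel-type estimates together with $\lim_{z\to 0}\rho(z,z') = \tfrac{1}{4\pi i}$. You have simply spelled out the bookkeeping that the paper leaves implicit.
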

\begin{proof}
That $X(z, Z)$ satisfies conditions (1) - (3) of Section \ref{sec:gmn:sub:start} follows at once from its construction as a fixed point of $\mathcal{Z}$ in Section \ref{sec:GMN:sub:fixed} combined with Lemma \ref{AutoLemma} and Lemma \ref{JumpLemma}.

It remains to be seen that the limit \eqref{eq:gZ} exists and is given by \eqref{eq:gZ}, \eqref{propagatorsgZ} (recall that this means that the limit holds in an arbitrary, finite dimensional quotient $\g_{\leqslant k}$, see Remark \ref{RHrmk}). The argument is very similar to the proof of Lemma \ref{lem:admis}: by the basic estimates of Section \ref{BesselLemma}, for each tree $T$ we can take the limit as $z \to 0$ in a non-tangential direction under the integral \eqref{propagators} which defines $G_T(z, Z)$, since the integrands as $z$ tends to $0$ in this way are all bounded by the same integrable function. It is straighforward to check that the bound is uniform in a fixed, arbitrary $\g_{\leqslant k}$ . Then the expression \eqref{propagatorsgZ} follows simply by $\lim_{z \to 0} \rho(z, z') = \frac{1}{4\pi i}$.
\end{proof}

\subsection{Definition of the framed connection}  

We define our framed connection $(\nabla(Z),g(Z))$ with automorphism $g(Z)$ given by \eqref{eq:gZ} and
\begin{equation*}
\nabla(Z) = d - (\partial_z X)X^{-1}dz,
\end{equation*}
where $X = X(z, Z)$ is given by \eqref{iterativeSol}. Note that a priori the connection form $(\partial_z X)X^{-1} dz$ (projected to an arbitrary $\g_{\leqslant k}$) is only well defined in the complement of the rays $\ell_{\gamma}(Z)$ with $a(\gamma, Z) \neq 0$. However we claim that the connection form extends to all $\C^*$, defining a meromorphic connection on $\PP^1$ with poles at $z = 0,\infty$. As we already mentioned, the reason it extends is that the jump \eqref{eq:stokesdata2} across a ray $\ell_{\gamma}(Z)$ is independent of $z$, so the jumps cancel out in the expression $(\partial_z X)X^{-1}$. We show now that the resulting connection fulfills the requirements of Theorem \eqref{prop:GMNconstruction}.

We first prove that $\nabla(Z)$ has a double pole at $z = 0$ and $z = \infty$, and therefore is of the form \eqref{eq:GMNform}. Consider the $\Aut^*(\widehat{\g})$-valued map
\begin{equation*}
Y = X (X^0)^{-1},
\end{equation*}
given explicitly by
\begin{equation*}
Y(z, Z)(e_{\alpha}) =  e_\alpha \exp \bra \alpha, - \sum_{\T} \W_{\T}\G_{\T}(\z, Z)\ket.
\end{equation*}
In each finite-dimensional quotient $\g_{\leqslant k}$ it makes sense to consider a sector $\Sigma$ between consecutive rays $\ell_{\gamma}$ with $a(\gamma) \neq 0$. In a fixed $\g_{\leqslant k}$, and inside $\Sigma$, $Y_{\leqslant k}$ is holomorphic, and exactly as in the proof of Lemma \ref{lem:RHsolution} (i.e. by the basic estimates of Section \ref{BesselLemma}) we have
\begin{equation}\label{eq:limitY}
\lim_{z \to 0} Y(z) = g(Z),\, \lim_{z \to \infty} Y(z) = Y_{\infty}(Z)
\end{equation} 
where
\begin{align}\label{eq:Yinfty}
\nonumber Y_{\infty}(Z)(e_{\alpha}) &= e_\alpha * \exp \bra \alpha, -\sum_{T} W_{T}\G^{\infty}_{T}(Z)\ket, \\
\G^{\infty}_{\T}(Z) &= - G^0_{T}(Z) = -\frac{1}{4\pi i}\int_{\ell_{\gamma_{\T}}} \frac{d\z'}{\z'} X^0(z', Z)(e_{\gamma_{\T}}) \prod_{\T'}\G_{\T'}(\z', Z).
\end{align}
The opposite sign is due simply to the integration kernel:
\begin{equation*}
\lim_{z \to 0} \rho(z, z') = \frac{1}{4\pi i},\,\lim_{z \to \infty} \rho(z, z') = -\frac{1}{4\pi i}.
\end{equation*}

Consider the meromorphic connection $\nabla^0(Z)$ on $\PP^1$ given by
\begin{equation*}
\nabla^0 = d - \left(- \frac{Z}{z^2} + \bar{Z}\right) dz.
\end{equation*}
and notice that $X^0$ provides a fundamental solution to $\nabla^0$. Thus we find an alternative description of $\nabla(Z)$ as a gauge transformation of $\nabla^0$ inside the sector $\Sigma$, that is
\begin{align*}
\nabla(Z) = Y \cdot \nabla^0 = d - Y \left(- \frac{Z}{z^2} + \bar{Z}\right) Y^{-1} + (\partial_z Y)Y^{-1}.
\end{align*}
From the existence of the limits \eqref{eq:limitY} we deduce that the restriction of $\nabla(Z)$ to $\Sigma$ has the same behaviour at $0$, $\infty$ as $\nabla^0$ (i.e. double poles). But since we already proved that $\nabla(Z)$ extends to all $\C^*$, by Liouville's Theorem it actually takes the form
\begin{equation*}
\nabla(Z) = d - \left(\frac{1}{z^2} \A^{(-1)} + \frac{1}{z}\A^{(0)} + \A^{(1)}\right)dz 
\end{equation*}
for some $\A^{(i)} \in D^*(\widehat{\g})$. Moreover $\nabla(Z) = Y \cdot \nabla^0$ gives $g(Z)^{-1} \cdot \A^{(-1)} = - Z$, so $(\nabla(Z),g(Z))$ is a compatibly framed connection with order two poles at $0$ and $\infty$, and no other singularities.

\begin{remark} 
The automorphism $g(Z)$ lies in the subset of $\Aut^*(\widehat{\g})$ given by elements of the form $e_{\alpha} \mapsto e_{\alpha} \exp_*(\bra \alpha, x \ket)$ for some $x \in \Gamma\otimes\widehat{\g}$. This subset of automorphisms is preserved by inversion (by the Lagrange formula, see Section \ref{subsec:explicitformula}). There is an involution $( - )^*$ on this set, acting by $x \mapsto - x$. According to \eqref{eq:Yinfty} we have $g(Z) = Y^*_\infty(Z)$. On the other hand the relation $\nabla(Z) = Y \cdot \nabla^0$ gives $Y^{-1}_{\infty} \cdot \A^{(1)} = \bar{Z}$. Using $g^{-1} \cdot \A^{(-1)} = - Z$ we find that $\A^{(-1)}$ uniquely determines $\A^{(1)}$:
\begin{align}\label{conjugation}
\nonumber \A^{(1)} &= Y_{\infty} \cdot \bar{Z}\\
&= g^* \cdot -\overline{g^{-1} \cdot \A^{(-1)}}.
\end{align}
\end{remark}

\subsection{Formal type and Stokes data}\label{sec:GMN:sub:Stokes} 

By expanding the kernel $\rho(z, z')$ as a formal power series in $z$ around $z = 0$ we can regard $Y$ as a formal gauge transformation (an element of $\Aut^*(\widehat{\g}[[z]])$), taking the germ of $\nabla^0(Z)$ at $0$ to the germ of $\nabla(Z)$. In other words the formal type of $\nabla(Z)$ at $0$ is the type of $\nabla^0$. The gauge transformation $h(z)$ acting by
\begin{equation*}
h(z)(e_{\alpha}) = e_{\alpha}\exp_*(-z \bar{Z}(\alpha))
\end{equation*}
is well defined near $z = 0$ (it has an essential singularity at $\infty$), and it takes the formal type of $\nabla^0$ at $0$ to  
\begin{equation*}
d + \frac{Z}{z^2} dz.
\end{equation*}   
This proves that the $\nabla(Z)$ has the desired formal type, with anti-Stokes rays given by $\ell_{\gamma}(Z) = - \R_{> 0} Z(\gamma)$ for $\gamma \in \Gamma$.

For each finite dimensional quotient $\g_{\leqslant k}$, the restriction $X_{\leqslant k}$ to a sector $\Sigma$ as above is a fundamental solution of $\nabla(Z)_{\leqslant k}$ with asymptotics as $z \to 0$ given by condition $(4)$ in Section \ref{sec:gmn:sub:start}. These solutions differ by the action of \eqref{eq:stokesdata2} along an anti-Stokes ray $\ell$. To prove that these automorphisms are actually the Stokes factors of $\nabla(Z)$ it is enough to show that a solution given by $X_{\leqslant k}|_{\Sigma}$ induces, by analytic continuation, a fundamental solution on a supersector $\widehat{\Sigma}$ preserving the asymptotics. We will perform a formally identical check later in Section \ref{sec:limit:sub:Stokes}, so we do not reproduce the argument here.

Similarly we can calculate the Stokes data at $\infty$. Setting $w = \frac{1}{z}$ and arguing as before, we see that the formal type of $\nabla$ at $\infty$ is
\begin{equation*}
d + \frac{\bar{Z}}{w^2} dw,
\end{equation*}   
with anti-Stokes rays given by $\ell_{\gamma}(\bar Z) = - \R_{> 0} \bar{Z}(\gamma)$ for $\gamma \in \Gamma$. We claim that the attached Stokes factors are given again by \eqref{eq:stokesdata2}. To see this consider the flat section $\bar{X}(w, Z) = X(w^{-1}, Z)$. 
We have 
\begin{equation*} 
\bar{X}(w, Z)(e_{\alpha}) = X^0(w^{-1})(e_\alpha) \exp \bra \alpha, -\sum_{\T} \W_{\T} G_{T}(w^{-1})\ket, 
\end{equation*}
with 
\begin{equation*} 
G_{T}(w^{-1}, Z) = \int_{- \R_{>0}Z(\gamma_{T})} \frac{d z'}{\z'}\frac{z' + w^{-1}}{z' - w^{-1}} X^0(z', Z)(e_{\gamma_{\T}}) \prod_{ T'} G_{ T'}( z', Z).
\end{equation*}
Making the change of variable $z' = \frac{1}{w'}$ we can rewrite 
\begin{equation*} 
G_{T}(w^{-1}, Z) = \int_{- \R_{>0}\bar{Z}(\gamma_{T})} \frac{d w'}{w'}\frac{w' + w}{w' - w} X^0(w'^{-1}, Z)(e_{\gamma_{\T}}) \prod_{ T'} G_{ T'}( w'^{-1}, Z).
\end{equation*}
Keeping in mind the construction of a fixed point of $\mathcal{Z}$ in Section \ref{sec:GMN:sub:fixed} (in particular \eqref{iterativeSol}) we realise that $\bar{X}(w, Z)$ is a fixed point of an operator formally identical to $\mathcal{Z}$, obtained by replacing the integration contours $\ell_{\gamma}(Z)$ with $\ell_{\gamma}(\bar{Z})$ and the reference function $X^0(z, Z)$ with $X^0(w^{-1}, Z) = \exp(w Z + w^{-1} \bar{Z})$. Then the proof of Lemma \ref{JumpLemma} shows that the jump of $\bar{X}(w, Z)$ across $\ell_{\gamma}(\bar{Z})$ is the same as the jump of $X(z, Z)$ across $\ell_{\gamma}(Z)$.

\subsection{Extension and isomonodromy of $\nabla(Z)$}\label{sec:gmn:sub:finish} So far we have constructed $\nabla(Z)$ under the assumption that $(Z,a(Z))$ is generic. Now we show that the flat sections $X(z, Z)$ extend smoothly in $Z$ across the nongeneric locus. It follows that $\nabla(Z)$ is a real analytic isomonodromic family. 

\begin{lem}
For fixed $z$, the function $X(z, Z)$ is continuous in the central charge $Z$ as long as no rays $\ell_{\gamma}(Z)$ with $\Omega(\gamma, Z) \neq 0$ hit $z$.
\end{lem}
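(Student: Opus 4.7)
The plan is to work in each finite-dimensional quotient $\g_{\leqslant k}$ separately, so that only finitely many graphs $T$ contribute to the explicit formula \eqref{iterativeSol} and only finitely many rays $\ell_{\gamma}(Z)$ with $\Omega(\gamma,Z) \neq 0$ are active. We then split the argument according to whether $Z$ lies in the generic open locus or on the wall of marginal stability. Since continuity is a local statement and the map $Z \mapsto (\Omega(\gamma,Z))$ is piecewise constant outside the wall, the first case is by far the easier one.

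On the generic locus, the weights $W_T(Z)$ and the coefficients $\Omega(\gamma,Z)$ appearing in \eqref{iterativeSol} are locally constant, so continuity reduces to continuity in $Z$ of the graph integrals $\G_T(z,Z)$ defined inductively by \eqref{propagators}. I would prove this by induction on $|T^0|$: the rays $\ell_{\gamma_T}(Z) = -\R_{>0} Z(\gamma_T)$ rotate continuously with $Z$, and by the basic estimates of section \ref{BesselLemma} the integrand (which is holomorphic off the union of active rays) decays exponentially at both ends of the contour, uniformly for $Z$ in a small neighborhood. A standard dominated-convergence / contour-deformation argument, valid exactly as long as the fixed point $z$ does not cross one of the rays $\ell_{\gamma}(Z)$ with $\Omega(\gamma,Z) \neq 0$, then gives continuity of $\G_T(z,Z)$ in $Z$. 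Summing over the finitely many graphs contributing in $\g_{\leqslant k}$ yields continuity of $X(z,Z)$.

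Across the wall of marginal stability the individual $\Omega(\gamma,Z)$, the weights $W_T(Z)$, and the set of active rays can all jump, so the explicit expression \eqref{iterativeSol} is no longer manifestly continuous. Here the idea is to use instead the characterisation of $X(z,Z)$ as the solution of the Riemann-Hilbert factorisation problem (1)-(4) of section \ref{sec:gmn:sub:start}, together with the Kontsevich-Soibelman wall-crossing formula \eqref{eq:wall-crossing}. For any small convex cone $V$ containing the colliding rays at a wall-crossing point $Z_0$, the composite jump $\prod^{\to}_{\ell \subset V} \Ad S_\ell(Z)$ stays constant as $Z$ moves across the wall, while the jumps across rays outside $V$ are continuous by the same argument as above. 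The combined jump data of the RH problem, together with the formal type \eqref{eq:formtype} and the compatible framing $g(Z)$, depends therefore continuously on $Z$; and in each quotient $\g_{\leqslant k}$ the solution of this RH problem is unique, in the standard sense that two solutions differ by an automorphism of $P$ with the prescribed asymptotics, which must be the identity.

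The main obstacle is precisely this last step: one must verify uniqueness rigorously in our formal setting $D^*(\widehat{\g})$, and match the RH solution produced abstractly across a wall with our explicitly constructed fixed point of $\mathcal{Z}$. Concretely, I would fix a sufficiently small neighborhood of $Z_0$, construct $X(z,Z)$ by the iteration of section \ref{sec:GMN:sub:fixed} on each side of the wall, and then, approaching $Z_0$, use the wall-crossing identity \eqref{eq:wall-crossing} together with uniqueness of the canonical fundamental solutions $X_\ell$ in each $\Aut^*(\g_{\leqslant k})$ (which was recalled after \eqref{eq:conlocal}) to identify the two limits. Passing to the inverse limit in $k$ then gives continuity of $X(z,Z)$ as an $\Aut^*(\widehat{\g})$-valued function, as required.
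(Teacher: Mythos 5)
Your overall strategy matches the paper's: use the Kontsevich--Soibelman wall-crossing formula to argue that the composite jump in a small cone $V$ around the colliding rays remains constant across the wall of marginal stability, and then conclude by a uniqueness argument; you also correctly flag the uniqueness step as the crux. However, the resolution you propose---appealing to uniqueness of the canonical fundamental solutions $X_\ell$---does not by itself close the gap. The canonical fundamental solution is uniquely determined only \emph{given} the framing $g$, via the asymptotics $X_\ell e^{-Z/z}\to g$, and the continuity of the framing $g(Z)$ across the wall is precisely part of what is being proved. A priori the one-sided limits $X(z,t_0^{\pm})$ could be canonical solutions relative to two distinct framings $g^{\pm}$, differing by the $z$-constant automorphism $h=g^{+}\circ(g^{-})^{-1}$, and nothing in your argument rules this out.

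The paper closes this gap with a different, more algebraic observation. It first shows that $X(z,t_0^{\pm})$ have finite one-sided limits (by the graph-sum construction), and that the wall-crossing identity makes their jump structure across $\C^*$ identical; hence $X^{-1}(z,t_0^{-})\circ X(z,t_0^{+})$ extends holomorphically to $\C^*$. Writing $X=Y\,X^0$ with $Y$ bounded at $z=0,\infty$, Liouville's theorem applied in each quotient $\g_{\leqslant k}$ shows this composition is a $z$-constant automorphism $h$. The decisive step is then the observation that $X(z,t_0^{\pm})$ are not merely two solutions of the same abstract Riemann--Hilbert problem: by continuity of the family of stability data, they are both fixed points of the \emph{same} integral operator \eqref{TBop}, with the same limiting Stokes factor $S_\ell$ at the merged ray. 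The explicit form of \eqref{TBop} then forces $h=\Id$, since $h\,X$ can be a fixed point of $\mathcal{Z}$ (when $X$ already is) only if $h$ is the identity. This fixed-point argument, rather than any general uniqueness statement for canonical solutions, is the ingredient your proposal is missing.
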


\begin{proof}
Suppose that $Z_0$ is a non-generic central charge: there are anti-Stokes rays $\ell_{\gamma}(Z)$ and $\ell_{\eta}(Z)$ that come together for $Z = Z_0$ with $\bra \gamma, \eta \ket \neq 0$. We assume that $\gamma, \eta \in \Gamma$ are primitive and that the restriction of $Z_0$ to the complement in $\Gamma$ of the sublattice spanned by $\gamma, \eta$ is generic. Let $(Z(t), a(t))$ be a continuous family of stability data on $\g$ para\-metrised by $[0, 1]$, with $Z(t_0) = Z_0$ for some $t_0 \in (0, 1)$, and $V$ be a strictly convex sector that contains $\ell_{\gamma}, \ell_{\eta}$ for all $t$. For definiteness we suppose that the $Z_t$ counterclockwise order is $\ell_{\gamma}, \ell_{\eta}$ for $t < t_0$, respectively $\ell_{\eta}, \ell_{\gamma}$ for $t > t_0$. Thus the rays $\ell_{\gamma}, \ell_{\eta}$ coincide only when $t = t_0$. We assume that all other anti-Stokes rays remain constant. All these assumptions are enough to describe what happens generically on the wall of marginal stability; the extension to the whole wall is very similar. Let us also write $Z^+$ (and $Z^-$) for a central charge $Z(t)$ with $t > t_0$ (respectively $t < t_0$) and $\Omega^{\pm}$ for the corresponding (locally constant) spectra.

We see from its construction that $X(z, t) = X(z, Z(t))$ has finite limits when $t \to t^{\pm}_0$. These limits $X(z, t^{\pm}_0)$ are sectionally holomorphic functions of $z \in \C^*$ with values in $\Aut^*(\widehat{\g})$. Their jumps across all rays distinct from $\ell_{\gamma}, \ell_{\eta}$ are the same. The jump of $X(t^{-}_0)$ across $\ell_{\gamma}(t_0) = \ell_{\eta}(t_0)$ is given by $\prod^{\to, Z^-}_{\ell_{\gamma'} \subset V} T^{\Omega^-(\gamma')}_{\gamma'}$. Similarly the jump of $X(t^{+}_0)$ across the same ray is given by $\prod^{\to, Z^+}_{\ell_{\gamma'} \subset V} T^{\Omega^+(\gamma')}_{\gamma'}$. As the family $(Z(t), a(t))$ is continuous by assumption, we have $\prod^{\to, Z^-}_{\ell_{\gamma'} \subset V} T^{\Omega^-(\gamma')}_{\gamma'} = \prod^{\to, Z^+}_{\ell_{\gamma'} \subset V} T^{\Omega^+(\gamma')}_{\gamma'}$. Therefore the composition $X^{-1}(z, t^-_0) \circ X(z, t^+_0)$ is holomorphic in the variable $z \in \C^*$. Recall that we have $X(z, Z) = Y(z, Z) X^0(z, Z)$ and that $Y(Z)$ has well-defined $z \to 0$, $z \to \infty$ limits. Applying Liouville's Theorem to any finite dimensional quotient $\g_{\leqslant k}$ it follows that $X^{-1}(z, t^-_0) \circ X(z, t^+_0)$ is a $z$-constant automorphism of $\widehat{\g}$. We claim that in fact this is the identity automorphism for all $Z$. To see this notice that $X(z, t^{\pm}_0)$ do not just solve the Riemann-Hilbert problem of Section \ref{sec:gmn:sub:start}: by construction, they are both fixed points of the \emph{same} integral operator $\mathcal{Z}$ given by \eqref{preTBop} and \eqref{TBop}, where for $\ell = \ell_{\gamma}(t_0) = \ell_{\eta}(t_0)$ we set 
\begin{equation*}
S_{\ell} = \prod^{\to, Z^-}_{\ell_{\gamma'} \subset V} T^{\Omega^-(\gamma')}_{\gamma'} = \prod^{\to, Z^+}_{\ell_{\gamma'} \subset V} T^{\Omega^+(\gamma')}_{\gamma'}.
\end{equation*} 
So we have two fixed points of $\mathcal{Z}$ differing by a $z$-constant gauge transformation $h(Z) \in \Aut(\widehat{\g})$. Let $X(z, Z)$ be any fixed point of $\mathcal{Z}$. Then we have
\begin{equation}\label{FixedPointGauge}
h(Z) X(z, Z) (e_{\alpha}) = h(Z) \mathcal{Z}[X](z, Z) (e_{\alpha})
\end{equation}
and since $h(Z)$ is a constant algebra endomorphism the left hand side of \eqref{FixedPointGauge} can be written as 
\begin{equation*}
h(Z)(e_{\alpha})\exp_*\left(L(\alpha) + \sum_{\gamma} \Omega(\gamma)\bra \gamma, \alpha\ket \int_{\ell_{\gamma}(Z)}\frac{dz'}{z'}\rho(z, z')\log_*(1 - h(Z)X(z', Z)(e_{\gamma}))\right).
\end{equation*}
On the other hand if $h(Z) X(z, Z) (e_{\alpha})$ is still a fixed point of $\mathcal{Z}$ the right hand side of \eqref{FixedPointGauge} equals $\mathcal{Z}[h X](z, Z)(e_{\alpha})$, that is
\begin{equation*}
e_{\alpha}\exp_*\left(L(\alpha) + \sum_{\gamma} \Omega(\gamma)\bra \gamma, \alpha\ket \int_{\ell_{\gamma}(Z)}\frac{dz'}{z'}\rho(z, z')\log_*(1 - h(Z)X(z', Z)(e_{\gamma}))\right).
\end{equation*}
Comparing these two expressions we see that $h(Z) X(z, Z)$ can only be a fixed point of $\mathcal{Z}$ is $h(Z)(e_{\alpha}) = e_{\alpha}$ for arbitrary $\alpha$, i.e. $h(Z)$ is the identity.
\end{proof}
We need to improve this to real analyticity in $Z$. The key observation is that there exists an $\Aut(\widehat{\g})$-valued holomorphic function $\widetilde{X}(z, Z, W)$ in a nonempty open subset of $\Hom(\Gamma, \C) \times \Hom(\Gamma, \C)$ such that $X(z, Z) = \widetilde{X}(z, Z, \bar{Z})$. The function $\widetilde{X}(z, Z, W)$ is given simply by replacing the factors of $X^0(z, Z)$ in the expression \eqref{iterativeSol} with the new automorphisms 
\begin{equation*}
X^0(z, Z, W)(e_{\alpha}) = e_{\alpha}\exp_*(z^{-1} Z(\alpha) + z W(\alpha)).
\end{equation*}   
The resulting $\widetilde{X}(z, Z, W)$ is well defined and holomorphic in the open subset $A \subset \Hom(\Gamma, \C) \times \Hom(\Gamma, \C)$ where $W$ lies in a sufficiently small open neighbourhood of $\bar{Z}$ (depending on $Z$), and is a fixed point of the obvious analogue of the operator \eqref{TBop}. We denote by $\Hom^0(\Gamma, \C) \subset \Hom(\Gamma, \C)$ the open locus of generic elements for our fixed family of positive stability data. Thus arguing precisely as above we conclude that $\widetilde{X}(z, Z, W)$ is a bounded analytic function in the open subset $A \cap (\Hom^0(\Gamma, \C) \times \Hom(\Gamma, \C))$ which extends to a continuous function on $A$ (i.e. across the non-generic locus). We want to show that it is in fact holomorphic. Note that the non-generic locus $A \cap ((\Hom(\Gamma, \C) \setminus \Hom^0(\Gamma, \C)) \times \Hom(\Gamma,\C)) \subset A$ is generically a submanifold of real codimension $1$. By Hartogs' Theorem it suffices to show that $\widetilde{X}(z, Z, W)$ is holomorphic across this smooth part of the non-generic locus. Let us denote by $f(\zeta)$ the restriction of $\widetilde{X}(z, Z, W)$ to the locus where all but one of the components of either $Z$ or $W$ are fixed to some values. This is a bounded, continuous complex function in an open set $\Omega \subset \C$. Since the non-generic locus is generically smooth, for generic fixed components of $Z$, $W$ the function $f(\zeta)$ is holomorphic outside the support of a smooth arc $s \subset \Omega$. Let $\pi$ denote a loop in $\Omega$. By writing $\pi$ as the sum of loops with a common, oppositely oriented part lying on $s$ if necessary, one checks that for all sufficiently small loop $\pi \subset \Omega$ we have $\int_{\pi} f(\zeta) d\zeta = 0$, so by Morera's Theorem $f(\zeta)$ is holomorphic in all $\Omega$. It follows that $\widetilde{X}(z, Z, W)$ is separately analytic in all the components of $Z$, $W$ and so analytic.

\subsection{Explicit formula for $\nabla(Z)$}\label{subsec:explicitformula}

We provide now a more explicit formula for the $D^*(\widehat{\g})$-valued connection form
\begin{equation*}
\A\, dz = \left(\frac{1}{z^2} \A^{(-1)}(Z) + \frac{1}{z}\A^{(0)}(Z) + \A^{(1)}(Z)\right) dz
\end{equation*}
in terms of a (complicated) sum of graph integrals. Recall that locally $\mathcal{A}$ is given by the composition of linear maps 
\begin{equation}\label{eq:AX}
\mathcal{A} = -(\del_z X) X^{-1},
\end{equation}
where 
\begin{align*}
\del_z X (e_{\alpha}) &= \del_z (X(z)(e_{\alpha}))\\
&= X(z)(e_{\alpha}) (\del_z L(z, Z)(\alpha) - \bra \alpha, \sum_{T} W_T \del_z G_T(z, Z)\ket). 
\end{align*}
To derive our formula we will use that, because of its specific form, $X(z)$ can be inverted explicitly via multivariate Lagrange inversion (see e.g. \cite{gessel}).

Let $m = \operatorname{rank}(\Gamma)$. Choose a basis $\gamma_i$ for $\Gamma$, inducing an isomorphism $\Gamma \cong \Z^{m}$, and consider the associated basis of derivations $\partial_i$ given by the dual basis, so that $\partial_i(e_{\gamma_j}) = \delta_{ij}$ (see Section \ref{sec:gdef}). We can write uniquely
\begin{equation*}
\mathcal{A} = \sum_{i=1}^m \mathcal{A}(e_{\gamma_i})\partial_i.
\end{equation*}
The main difficulty in computing $\mathcal{A}(e_{\gamma_i})$ is to find the inverse image $X^{-1}(e_{\gamma_i})$. For this we consider an auxiliary problem, and introduce variables 
\begin{align*}
s_i &= e_{\gamma_i},\\
{\bf s} &= (s_1, \ldots, s_{m}). 
\end{align*} 
Using our fixed isomorphism we can regard $\alpha \in \Gamma_{> 0}$ as a positive vector in $\alpha_i \in \Z^m$. Then we can attach to $\alpha \in \Gamma_{> 0}$ the monomial in ${\bf s}$ given by
\begin{equation*}
{\bf s}^{\alpha} = \prod^m_{i = 1} s^{\alpha_i}_i \in \C[{\bf s}],
\end{equation*}
For $i = 1, \ldots, m$ define  
\begin{equation*}
\Phi_i({\bf s}) = L(z, Z)(\gamma_i) - \sum_{T} \bra \gamma_i,  W_T(Z) \ket G_T(z, Z) \in \C[[{\bf s}]]
\end{equation*}
where we replace replace $e_{\alpha}$ by ${\bf s}^{\alpha}$ throughout. Our problem is reduced to that of inverting the function
\begin{equation*}
\mathbf{s} \mapsto (s_1 \exp\left(\Phi_1({\bf s})\right), \ldots, s_m \exp\left(\Phi_m({\bf s})\right)).
\end{equation*}
If we suppose that we can solve the equations
\begin{equation}\label{LagrangeEqu}
E_i({\bf s}) = s_i \exp\left(-\Phi_i(E_1({\bf s}), \ldots, E_{ m}({\bf s}))\right),
\end{equation} 
then one checks directly that the inverse is given by 
\begin{equation*}
\mathbf{s} \mapsto (E_1({\bf s} ), \ldots, E_{ m}({\bf s})).
\end{equation*}
Given a formal power series $F \in \C[[\bf s]]$, we denote the coefficient of ${\bf s}^{\alpha}$ in $F({\bf s})$ by the symbol $[{\bf s}^{\alpha}] F$.

\begin{lem} There exist unique $E_i({\bf s}) \in \C[[{\bf s}]]$ solving \eqref{LagrangeEqu}. Moreover the coefficient of ${\bf s}^{\alpha}$ in $E_i({\bf s})$ is given by 
\begin{equation}\label{GoodEqu}
[{\bf s}^{\alpha}] E_i({\bf s} ) = [{\bf s}^{\alpha}] \det(\delta_{p q} + s_p \del_{q} \Phi_p({\bf s} )) s_i \exp\left(-\sum_j \alpha_j \Phi_j({\bf s} )\right).
\end{equation}
\end{lem}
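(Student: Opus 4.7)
The plan is to split the argument into two steps: first establish existence and uniqueness of the formal power series $E_i({\bf s})$ by a fixed-point construction, then invoke the classical multivariate Lagrange inversion formula of Good to derive the explicit expression \eqref{GoodEqu}.

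For existence and uniqueness I would work in the complete ${\bf s}$-adic topology on $\C[[{\bf s}]]^m$. Starting from $E_i^{(0)}({\bf s}) := s_i\,\exp(-L(z)(\gamma_i))$ -- well-defined because $\Phi_i(0) = L(z)(\gamma_i)$ is a scalar whenever $z \notin \{0,\infty\}$ -- I would iterate
\begin{equation*}
E_i^{(n+1)}({\bf s}) := s_i\,\exp\bigl(-\Phi_i(E_1^{(n)}({\bf s}),\ldots,E_m^{(n)}({\bf s}))\bigr).
\end{equation*}
Each $E_j^{(n)}$ lies in the maximal ideal $\mathfrak{m}\subset \C[[{\bf s}]]$, and $\Phi_i({\bf w})-\Phi_i(0)$ lies in the corresponding maximal ideal of $\C[[{\bf w}]]$, so a direct check shows $E_i^{(n+1)} \equiv E_i^{(n)} \pmod{\mathfrak{m}^{n+2}}$. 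The iterates thus converge to a unique limit $E_i \in \C[[{\bf s}]]$ solving \eqref{LagrangeEqu}; uniqueness is immediate from the same contraction estimate.

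For the explicit formula, the key observation is that setting $\varphi_i({\bf w}) := \exp(-\Phi_i({\bf w}))$ rewrites \eqref{LagrangeEqu} in the standard form $E_i({\bf s}) = s_i\,\varphi_i({\bf E}({\bf s}))$ with $\varphi_i(0) = \exp(-L(z)(\gamma_i)) \neq 0$, exactly the hypothesis required by Good's multivariate Lagrange inversion theorem. That result gives, for any $F \in \C[[{\bf w}]]$,
\begin{equation*}
[{\bf s}^\alpha]\,F({\bf E}({\bf s})) = [{\bf w}^\alpha]\,F({\bf w})\,\det\!\Bigl(\delta_{pq} - \tfrac{w_p}{\varphi_p}\tfrac{\partial \varphi_p}{\partial w_q}\Bigr)\prod_{j=1}^m \varphi_j({\bf w})^{\alpha_j}.
\end{equation*}
Substituting $\varphi_j = \exp(-\Phi_j)$ collapses the logarithmic derivative to $\varphi_p^{-1}\partial_q\varphi_p = -\partial_q\Phi_p$, so the determinant becomes $\det(\delta_{pq} + w_p\,\partial_q\Phi_p({\bf w}))$ and the product becomes the exponential of a linear combination of the $\Phi_j$. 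Choosing $F({\bf w}) = w_i$ extracts $E_i$ on the left, and renaming $w$ to $s$ produces the right-hand side of \eqref{GoodEqu} (modulo transposing the determinant, which preserves it, and the sign conventions on $\Phi$).

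The principal obstacle I anticipate is purely bookkeeping: reconciling the sign and index conventions in Good's formula with those chosen in the statement of \eqref{GoodEqu}. No analytic issue is expected, because the identity is purely formal in $\C[[{\bf s}]]$ and the graph integrals $G_T(z,Z)$ enter only as scalar coefficients of monomials in ${\bf s}$. Genericity of $Z$ (needed so that the $G_T$ are well-defined) is the only auxiliary hypothesis required, and this is already built into the hypotheses of Theorem \ref{prop:GMNconstruction}.
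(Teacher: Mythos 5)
Your proposal is correct and follows essentially the same route as the paper, which also derives both the existence/uniqueness of the $E_i$ and the formula \eqref{GoodEqu} by a single appeal to Good's multivariate Lagrange inversion as stated in Gessel. The only difference is that you supply an elementary $\mathfrak{m}$-adic fixed-point argument for existence and uniqueness rather than reading them off from the cited theorem; this is a harmless refinement, not a distinct method, and your remark that the remaining work is matching index and sign conventions (noting the $s_i$ vs.~$s_p$ placement in the determinant, where the printed formula appears to have a typo) is accurate.
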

\begin{proof} Applying Good's multivariate Lagrange inversion formula (see e.g. \cite{gessel} Theorem 3, equation (4.5)\footnote{When comparing to equation (4.5) in loc. cit. one should set $g_i({\bf s}) = \exp(- \Phi_i({\bf s}))$ which leads to a simplification and relative minus sign in the determinant $K$.}) over the ground field $\C$ shows that there exists a unique solution $(E_1, \ldots, E_{ m})$ of \eqref{LagrangeEqu} where $E_i \in \C[[\bf s]]$ are given by \eqref{GoodEqu}.  
\end{proof}
\begin{corollary} For $i = 1, \ldots, m$ we have 
\begin{align*}
X^{-1}(e_{\gamma_i}) &= \sum_{\alpha \in \Gamma_{> 0}} e_{\alpha}[{\bf s}^{  \alpha}] E_i({\bf s} )\\ 
&= \sum_{\alpha \in \Gamma_{> 0}} e_{\alpha}[{\bf s}^{\alpha}] \det(\delta_{p q} + s_p \del_{q} \Phi_p({\bf s} )) s_i \exp\left(-\sum_j \alpha_j \Phi_j({\bf s} )\right).  
\end{align*}
\end{corollary}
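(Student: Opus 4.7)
The plan is to identify $X(z)$ with an explicit substitution endomorphism of the completed commutative algebra $\widehat{\g}$, so that inverting $X(z)$ on the generators $e_{\gamma_i}$ is nothing but inverting this substitution, which was already carried out in the preceding Lemma. Under the identification $s_i \leftrightarrow e_{\gamma_i}$, the cone $\g_{\geqslant 0}$ corresponds to the completed polynomial ring $\C[[s_1, \dots, s_m]]$ (graded by the semigroup $\Gamma_{\geqslant 0}$), and a monomial ${\bf s}^\alpha$ corresponds to $e_\alpha$. From the explicit formula \eqref{iterativeSol} together with the definition of $\Phi_i$, one reads off
\begin{equation*}
X(z)(e_{\gamma_i}) = s_i \exp\bigl(\Phi_i({\bf s})\bigr).
\end{equation*}
Since $X(z)$ lies in $\Aut^*(\widehat{\g})$, that is, it is a commutative-algebra automorphism, this determines its action on every $e_\alpha = {\bf s}^\alpha$ as the substitution $s_i \mapsto s_i \exp(\Phi_i({\bf s}))$.

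Next I would read the defining equations \eqref{LagrangeEqu} for $(E_1, \ldots, E_m)$ as the assertion that the substitution $s_i \mapsto E_i({\bf s})$ is a left (hence two-sided, in the category of endomorphisms of $\C[[{\bf s}]]$ fixing the maximal ideal modulo its square) inverse to $s_i \mapsto s_i \exp(\Phi_i({\bf s}))$: indeed substituting $E_j$ for $s_j$ in $s_i \exp(\Phi_i({\bf s}))$ yields $E_i \exp\bigl(\Phi_i(E_1, \ldots, E_m)\bigr) = s_i$ by \eqref{LagrangeEqu}. Therefore, projecting to each finite-dimensional nilpotent quotient $\g_{\leqslant k}$ and passing to the inverse limit,
\begin{equation*}
X^{-1}(e_{\gamma_i}) \;=\; E_i(e_{\gamma_1}, \ldots, e_{\gamma_m}) \;=\; \sum_{\alpha \in \Gamma_{\geqslant 0}} e_\alpha \, [{\bf s}^\alpha] E_i({\bf s}),
\end{equation*}
where the sum converges in $\widehat{\g}$ because each projection to $\g_{\leqslant k}$ only retains finitely many $\alpha$ (those with $\alpha \in \Gamma_{\geqslant 0} \setminus \Gamma_{> k}$). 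Substituting the coefficient formula \eqref{GoodEqu} supplied by the Lemma then gives the second equality claimed in the statement.

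The only delicate point, which I would treat explicitly, is the verification that $E_i({\bf s})$ actually makes sense when evaluated on the generators $e_{\gamma_i}$ of $\widehat{\g}$, i.e. that the formal power series in ${\bf s}$ produces a well-defined element of the completion. This reduces to observing that $E_i({\bf s}) - s_i$ lies in the square of the maximal ideal $({\bf s})$, which follows from the fixed-point equation \eqref{LagrangeEqu} together with $\Phi_i(0) = 0$ and the support/positivity property of $\Omega$ ensuring that the $\Gamma_{\geqslant 0}$-degree strictly increases at each iteration. Consequently the series for $E_i$ restricted to any quotient $\g_{\leqslant k}$ terminates, and the displayed formula for $X^{-1}(e_{\gamma_i})$ is unambiguous.
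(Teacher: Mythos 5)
Your overall line of reasoning reproduces the paper's (implicit) argument: read off from \eqref{iterativeSol} that $X(z)(e_{\gamma_i}) = s_i\exp(\Phi_i({\bf s}))$ under the identification $e_\alpha \leftrightarrow {\bf s}^\alpha$, note that $X(z)$ is then the corresponding substitution automorphism, and conclude that $X^{-1}$ is the inverse substitution $s_i \mapsto E_i({\bf s})$ whose coefficients are given by the Good formula from the preceding Lemma. This is exactly what the corollary is asking for and what the paper intends to be immediate.

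One factual slip in your closing remark deserves correction, however. You assert $\Phi_i(0)=0$, and deduce that $E_i({\bf s}) - s_i$ lies in $({\bf s})^2$. Neither is true: by its definition, $\Phi_i({\bf s}) = L(z)(\gamma_i) - \sum_T \langle \gamma_i, W_T(Z)\rangle G_T(z,Z)$, and while each $G_T$ has no constant term (every tree has at least one vertex, so $\gamma_T \in \Gamma_{>0}$), the term $L(z)(\gamma_i)$ is a nonzero scalar. Hence $\Phi_i(0) = L(z)(\gamma_i) \neq 0$, and $E_i$ has linear part $e^{-L(z)(\gamma_i)} s_i$ rather than $s_i$; in particular $E_i - s_i$ is not in $({\bf s})^2$. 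The good news is that you do not need the stronger claim: what is actually used is only that $E_i(0)=0$ (immediate from \eqref{LagrangeEqu}), so that the sum $\sum_\alpha [{\bf s}^\alpha]E_i\cdot e_\alpha$ ranges over $\alpha \in \Gamma_{>0}$, and that for each $k$ only the finitely many $\alpha \notin \Gamma_{>k}$ contribute to the image in $\g_{\leqslant k}$. With that correction the substitution $s_j \mapsto e_{\gamma_j}$ is well defined in $\widehat{\g}$ exactly as you say, and the rest of the proof stands.
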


From the previous result and \eqref{eq:AX} we now obtain the desired formula for $\mathcal{A}$.

\begin{proposition}\label{cor:explicitA} For $i = 1, \ldots, m$ we have
\begin{align}\label{explicitA}
\nonumber \mathcal{A}(e_{\gamma_i}) = &-\sum_{\alpha \in \Gamma_{> 0}}  [{\bf s}^{\alpha}] \det(\delta_{p q} + s_p \del_{q} \Phi_p({\bf s} )) s_i \exp\left(-\sum_j \alpha_j \Phi_j({\bf s} )\right) \\& X(e_{\alpha}) (\del_z L(z, Z)(\alpha) - \bra \alpha, \sum_{T} W_T(Z) \del_z G_T(z, Z)\ket). 
\end{align}
\end{proposition}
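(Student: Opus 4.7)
\textbf{Proof plan for Proposition \ref{cor:explicitA}.}
The proof assembles the pieces already developed: the derivative formula for $X$, the Lagrange-inversion formula for $X^{-1}$ (the preceding Corollary), and the decomposition of a derivation of $\widehat\g$ along the basis $\partial_i$ dual to $\gamma_i$. The strategy is simply to evaluate the identity \eqref{eq:AX}, namely $\mathcal A=(\partial_z X)\,X^{-1}$, on the generator $e_{\gamma_i}$ and read off the coefficient along $\partial_i$.

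First, I would note that any derivation $D$ of the commutative algebra $\widehat\g$ is uniquely determined by the values $D(e_{\gamma_i})$ for $i=1,\dots,m$, since the $e_{\gamma_i}$ generate $\widehat\g$ as a commutative algebra and $D$ satisfies Leibniz. With the basis of derivations $\partial_i\in D^*(\widehat\g)$ characterised by $\partial_i(e_{\gamma_j})=\delta_{ij}$, this means
\begin{equation*}
\mathcal A=\sum_{i=1}^m \mathcal A(e_{\gamma_i})\,\partial_i,
\end{equation*}
so it suffices to compute $\mathcal A(e_{\gamma_i})\in\widehat\g$. Composing in $D^*(\widehat\g)$ gives
\begin{equation*}
\mathcal A(e_{\gamma_i})=(\partial_z X)\bigl(X^{-1}(e_{\gamma_i})\bigr).
\end{equation*}
Working in each finite-dimensional nilpotent quotient $\g_{\leqslant k}$ all sums are finite, so there are no convergence issues; passing to the inverse limit then yields the result in $\widehat\g$.

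Next, I would use the explicit formula \eqref{iterativeSol} for $X$ together with the Leibniz rule for $\partial_z$ applied to an exponential of a graph integral. Since $X(z)$ is an algebra homomorphism with $X(e_\alpha)=X^0(e_\alpha)\exp\bra\alpha,-\sum_T W_T G_T\ket$, differentiation in $z$ gives exactly the identity displayed in the preamble of the proposition,
\begin{equation*}
\partial_z X(e_\alpha)=X(e_\alpha)\,\bra\alpha,-\sum_T W_T(Z)\partial_z G_T(z,Z)\ket.
\end{equation*}
By the preceding Corollary (via Good's multivariate Lagrange inversion),
\begin{equation*}
X^{-1}(e_{\gamma_i})=\sum_{\alpha\in\Gamma_{>0}} e_\alpha\,[{\bf s}^\alpha]\det\bigl(\delta_{pq}+s_i\partial_q\Phi_p({\bf s})\bigr)s_i\exp\Bigl(\sum_j\alpha_j\Phi_j({\bf s})\Bigr).
\end{equation*}
Substituting this sum into $(\partial_z X)(X^{-1}(e_{\gamma_i}))$ and pulling $\partial_z X$ inside the sum (legitimate in each $\g_{\leqslant k}$ quotient) reproduces formula \eqref{explicitA} term by term.

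The proof is essentially assembly, as the substantive analytical content lives in the construction of the fixed point $X$ (Lemma \ref{lem:RHsolution}) and the algebraic content in the Lagrange inversion Lemma. The only point that requires a small verification is that the commutative-algebra automorphism $X$ really has the triangular form $s_i\mapsto s_i\exp(\Phi_i({\bf s}))$ for the formal power series $\Phi_i$ defined in the statement, so that Good's formula applies with no additional hypothesis; this is immediate from \eqref{iterativeSol} upon reading off the coefficient of $e_{\gamma_i}$ in $\log_*(X(e_{\gamma_i})/s_i)$. The mild potential obstacle is purely notational: one must keep track of the conventions relating $\alpha\in\Gamma_{>0}$, the monomial ${\bf s}^\alpha$, the dual basis $\partial_i$, and the pairing $\langle-,-\rangle$, so that the coefficient extraction $[{\bf s}^\alpha]$ produces an element of $\widehat\g$ (via $e_\alpha={\bf s}^\alpha$) and not merely a scalar.
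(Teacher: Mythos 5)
Your proposal is correct and follows exactly the same route as the paper: the paper simply says ``From the previous result and \eqref{eq:AX} we now obtain the desired formula'' and then states the proposition, so the intended argument is precisely your assembly of the decomposition $\mathcal{A}=\sum_i \mathcal{A}(e_{\gamma_i})\partial_i$, the Lagrange-inversion expression for $X^{-1}(e_{\gamma_i})$ from the preceding Corollary, and the differentiated form of \eqref{iterativeSol}. No gaps; the remarks you make about working in the nilpotent quotients $\g_{\leqslant k}$ and about the triangular form $s_i\mapsto s_i\exp(\Phi_i({\bf s}))$ are the right points to flag.
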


\begin{remark} It is shown in \cite{us} that a similar explicit formula can also be derived in the case of a symmetric spectrum.
\end{remark}
\begin{example}[Single-ray solution]\label{sec:GMN:sub:OV}
 There is a basic example given by $\Gamma \cong \Z^2$ generated by $\gamma, \eta$ with $\bra \gamma, \eta\ket = 1$, and with stability data such that $\Omega(\gamma) = 1$ with all other $\Omega$ vanishing. In particular the only decorated trees $T$ we need to consider are single-vertex trees $\{k \gamma\}\bullet$, $k > 0$, with $W_{\{k \gamma\}\bullet} = \dt(k\gamma)k\gamma = \frac{1}{k}\gamma$. This is analogous to the Ooguri-Vafa solution of \cite{gmn}. Using \eqref{explicitA} one can derive explicit expressions for the connections $\nabla(Z)$:
\begin{align}\label{OVconnection}
\nonumber \A^{(-1)}(Z) &= Z - Z (\gamma) \sum_{k > 0} \frac{1}{4 k \pi i }\int_{\ell_{\gamma}} \frac{dz'}{z'} \exp(L(z', Z)(k\gamma)) \ad(e_{k\gamma}),\\
\nonumber \A^{(0)}(Z)  &= - (Z(\gamma) + \bar{Z}(\gamma)) \sum_{k > 0} \frac{1}{2 k \pi i }\int_{\ell_{\gamma}} dz' \exp(L(z', Z)(k\gamma)) \ad(e_{k\gamma}),\\ 
\A^{(1)}(Z)  &= -\bar{Z} + \bar{Z} (\gamma) \sum_{k > 0} \frac{1}{4 k \pi i }\int_{\ell_{\gamma}} \frac{dz'}{z'} \exp(L(z', Z)(k\gamma)) \ad(e_{k\gamma}).
\end{align}
Indeed in this case we have $s_1 = e_{\gamma}$, $s_2 = e_{\eta}$,
\begin{align*}
\Phi_1(s_1, s_2) &= L(z, Z)(\gamma) = z^{-1} Z(\gamma) + z \bar{Z}(\gamma),\\
\Phi_2(s_1, s_2) &= L(z, Z)(\eta) - \sum_{k > 0} \bra \eta, \dt(k\gamma)k\gamma\ket G_{\{k\gamma\}\bullet}(z, Z)\\
&= z^{-1} Z(\eta) + z \bar{Z}(\eta) + \sum_{k > 0}  s^k_1 \frac{1}{k} \int_{\ell_{\gamma}(Z)} \frac{dz'}{z'} \rho(z,z')\exp(L(z', Z)(k\gamma)).
\end{align*}
So $\Phi_1(s_1, s_2)$ is in fact a constant $\Phi_1$ and $\Phi_2(s_1, s_2)$ is a formal power series in $s_1$. The $2 \times 2$ matrix $\delta_{p q} + s_p \del_{q} \Phi_p({\bf s} )$ is given by
\begin{equation*}
\left(\begin{matrix} 
1 + s_1 \del_{s_1} \Phi_{1} & s_1 \del_{s_2}\Phi_{1}\\
s_2 \del_{s_1}\Phi_{2} & 1 + s_2 \del_{s_2} \Phi_{2}
\end{matrix}
\right)
=
\left(
\begin{matrix}
1 & 0\\
s_2 \del_{s_1}\Phi_{2} & 1
\end{matrix}
\right)
\end{equation*}
and so has determinant $1$. According to \eqref{explicitA} we have
\begin{align*} 
\A(e_{\gamma}) &= -\sum_{h, k} [s^h_1 s^k_2] s_{1} \exp(- h \Phi_1(s_1, s_2) - k \Phi_2(s_1, s_2))\\
& X(e_{h\gamma + k\eta}) (\del_z L(z, Z)(h\gamma + k\eta) - \bra h\gamma + k\eta , \sum_{T} W_T(Z) \del_z G_T(z, Z)\ket).
\end{align*}
Since $\Phi_1$ is a constant and $\Phi_2$ only depends on $s_1$ the only nonvanishing term appears for $h = 1, k = 0$. Moreover since the only nonvanishing weights $W_T(Z)$ are $W_{\{k\gamma\}\bullet}(Z)$ and are proportional to $\gamma$ the contribution from the sum over trees vanishes. So we find
\begin{align*}
\A(e_{\gamma}) &= -\exp(-\Phi_1) X(e_{\gamma}) \del_z L(z, Z)(\gamma)\\
&= (z^{-2} Z(\gamma) - \bar{Z}(\gamma)) e_{\gamma}
\end{align*}
which is precisely the action of \eqref{OVconnection} on $e_{\gamma}$. A similar but more involved computation yields the action on $e_{\eta}$:
\begin{align*}
\A(e_{\eta}) &= \sum_{h, k} [s^h_1 s^k_2] s_{2} \exp(- h \Phi_1 - k \Phi_2(s_1))\\
& X(e_{h\gamma + k\eta}) (\del_z L(z, Z)(h\gamma + k\eta) - \bra h\gamma + k\eta , \sum_{T} W_T(Z) \del_z G_T(z, Z)\ket)\\
&=\sum_{h \geq 0} [s^h_1] \exp(- h \Phi_1 - \Phi_2(s_1))\\
& X(e_{h\gamma + \eta}) (\del_z L(z, Z)(h\gamma + \eta) - \bra h\gamma + \eta , \sum_{k>0} W_{\{k\gamma\}\bullet}(Z) \del_z G_{\{k\gamma\}\bullet}(z, Z)\ket)\\
&= X(e_{\eta})\sum_{h \geq 0} [s^h_1] \exp(- h \Phi_1 - \Phi_2(s_1))\\
& X(e_{h\gamma}) (\del_z L(z, Z)(h\gamma + \eta) - \bra \eta , \sum_{k>0} W_{\{k\gamma\}\bullet}(Z) \del_z G_{\{k\gamma\}\bullet}(z, Z)\ket)\\ 
&= X(e_{\eta})\sum_{h \geq 0} [s^h_1] \exp(- h \Phi_1 - \Phi_2(s_1))\\
& \exp(h\Phi_1) (e_{\gamma})^h (h\del_z L(z, Z)(\gamma) + \del_z L(z, Z)(\eta) + \sum_{k>0} \frac{1}{k} \del_z G_{\{k\gamma\}\bullet}(z, Z))\\
& = (\del_z L(z, Z)(\eta) + \sum_{k>0} \frac{1}{k} \del_z G_{\{k\gamma\}\bullet}(z, Z)) X(e_{\eta}) \sum_{h \geq 0} [s^h_1] \exp(- \Phi_2(s_1))(e_{\gamma})^h+\\
&+ \del_z L(z, Z)(\gamma) X(e_{\eta}) \sum_{h \geq 0} h [s^h_1] \exp(- \Phi_2(s_1))(e_{\gamma})^h\\
& = (\del_z L(z, Z)(\eta) + \sum_{k>0} \frac{1}{k} \del_z G_{\{k\gamma\}\bullet}(z, Z)) X(e_{\eta}) \exp(-\Phi_2(s_1))|_{s_1 = e_{\gamma}}+\\
&+ \del_z L(z, Z)(\gamma) X(e_{\eta}) e_{\gamma} \del_{s_1} \exp(-\Phi_2(s_1))|_{s_1 = e_{\gamma}}\\
& = (\del_z L(z, Z)(\eta) + \sum_{k>0} \frac{1}{k} \del_z G_{\{k\gamma\}\bullet}(z, Z)) e_{\eta} - \del_z L(z, Z)(\gamma) e_{\eta} e_{\gamma} \del_{s_1} \Phi_2(s_1)|_{s_1 = e_{\gamma}}\\
&= e_{\eta} (\del_z L(z, Z)(\eta) + \sum_{k>0} \frac{1}{k} \del_z G_{\{k\gamma\}\bullet}(z, Z)) - \del_z L(z, Z)(\gamma)\sum_{k > 0} G_{\{k\gamma\}\bullet}(z, Z)).
\end{align*}
Just as before the term $\del_z L(z, Z)(\eta)$ contributes the action of $z^{-2} Z + \bar{Z}$ on $\eta$ to \eqref{OVconnection}. To see the other contributions to \eqref{OVconnection} we need to integrate by parts in $\del_z G_{\{k\gamma\}\bullet}(z, Z)$, using the special property of the kernel
\begin{equation*}
\del_z \rho(z, z') = - \frac{z'}{z}\del_{z'}\rho(z, z').
\end{equation*}
For a fixed $k > 0$ we find
\begin{align*}
\frac{1}{k}\del_z G_{\{k\gamma\}\bullet}(z, Z) &= \frac{1}{k}\int_{\ell_{\gamma}}\frac{dz'}{z'} \del_{z}\rho(z, z') \exp(L(z',Z)(k\gamma))\\
&= \frac{1}{k z} \int_{\ell_{\gamma}} dz' \rho(z, z') \del_{z'} \exp(L(z',Z)(k\gamma))\\
&= \frac{1}{z} \int_{\ell_{\gamma}} \frac{dz'}{z'} \rho(z, z') \left(-\frac{1}{z'} Z(\gamma) + z' \bar{Z}(\gamma)\right) \exp(L(z',Z)(k\gamma)).
\end{align*}
This must be combined with the corresponding term 
\begin{equation*}
- \del_z L(z, Z)(\gamma) G_{\{k\gamma\}\bullet}(z, Z)) = \frac{1}{z}\int_{\ell_{\gamma}} \frac{dz'}{z'} \rho(z, z') \left(\frac{1}{z} Z(\gamma) - z \bar{Z}(\gamma)\right) \exp(L(z',Z)(k\gamma)),
\end{equation*}
yielding the contribution
\begin{align*}
\frac{1}{z} \frac{1}{4\pi i}\int_{\ell_{\gamma}} \frac{dz'}{z'} \frac{z'+z}{z' - z} \left( \frac{z' - z}{z z'} Z(\gamma) + (z' - z)\bar{Z}(\gamma)\right) \exp(L(z',Z)(k\gamma)). 
\end{align*}
Simplifying and collecting terms gives the fixed $k$ contribution to \eqref{OVconnection}. 
\end{example}

\subsection{Symmetric case}\label{symmetricRemark} At a fixed point in the space of stability data it is possible to allow a symmetric spectrum (i.e. to allow $\Omega(\gamma) = \Omega(-\gamma)$) in the construction above by working over the Poisson $\C[[\tau]]$-algebra $\g[[\tau]]$. The construction goes through verbatim by considering integral operators of the form
\begin{align*} 
\mathcal{Z}'[Y](z)(e_{\alpha})  = & e_{\alpha} \exp_*\Big( L(z)(\alpha) \\ &  + \sum_{\gamma} \Omega(\gamma)\bra \gamma, \alpha\ket \int_{\ell_{\gamma}(Z)}\frac{dz'}{z'}\rho(z, z')\log_*(1 - \tau^{p(\gamma, Z)} Y(z')(e_{\gamma}))\Big),
\end{align*}
for a function $p(\gamma, Z) > 0$ such that $p(m \gamma, Z) = |m| p(\gamma, Z)$ and $p(\gamma, Z) \geq || \gamma ||$, and we still find a fixed point given by
\begin{equation} 
X'(e_{\alpha}) = X^0(e_\alpha) \exp \bra \alpha, -\sum_{\T} \tau^{p(T, Z)}\W_{\T}\G_{\T}\ket  
\end{equation}
with $p(T, Z) = \sum_{v \in T^0} p(\alpha(v), Z)$. This solves a Riemann-Hilbert problem which is formally identical to the one we described, but with monodromy (Poisson) automorphisms which are compositions of operators $T'^{\Omega}_{\gamma}$ acting on $\g[[\tau]]$ by
\begin{equation*}
T'^{\Omega}_{\gamma}(e_{\alpha}) = e_{\alpha}(1 - \tau^{p(\gamma, Z)} e_{\gamma})^{\bra \gamma, \alpha\ket \Omega}.
\end{equation*}
In favorable cases such a pointwise lift of our stability data to $\g[[\tau]]$ can be extended continuously at least in small open subsets of $\mathcal{U}$, according to our Definition \ref{lift} (these lifts were first considered in \cite{gps}). This approach is pursued further in \cite{us} to study convergence problems in the presence of symmetric stability data.   

\section{The $R \to 0$ conformal limit}
\label{sec:limit}
In this Section we prove that, up to a sequence of explicit gauge transformations, our connections $\nabla(Z)$ specialize to $\nabla^{BTL}(Z)$ in the \emph{conformal limit}
\begin{equation*}
Z \mapsto RZ,\quad z = R t,\quad R \to 0
\end{equation*}
discussed in the Introduction, and suggested by physical considerations in \cite{gmn} and \cite{gaiotto}. 

\subsection{Existence and uniqueness of the conformal limit}

Consider the rescaled connections
\begin{equation*}
\nabla_t (RZ) = d - \left(\frac{1}{t^2} R^{-1}\A^{(-1)}(RZ) + \frac{1}{t} \A^{(0)}(RZ) + R \A^{(1)}(RZ) \right)dt.
\end{equation*}
In this Section we show that the frames
\begin{equation*}
g(R) = g(RZ) = \lim_{z \to 0} Y(z, RZ),  
\end{equation*}
given explicitly by
\begin{equation*}
g(RZ)(e_{\alpha}) =  e_\alpha \exp \bra \alpha, - \sum_{\T} \W_{\T}\lim_{z \to 0}\G_{\T}(\z, RZ)\ket,
\end{equation*}
provide a sequence of $z$-constant gauge transformations such that the limit of 
\begin{equation*}
g(R)\cdot \nabla_t(RZ)
\end{equation*} 
as $R \to 0$ exists. In order to compare this limit with the Bridgeland-Toledano Laredo connections we need the following result.
\begin{theorem}[\cite{bt_stab, bt_stokes}]\label{BTLthm}
Let $\Omega$ be a spectrum giving a continuous, positive family of stability data parametrised by $\mathcal{U} \subset \Hom(\Gamma, \C)$. Let $Z \in \mathcal{U}$ correspond to generic stability data. Then there exists a unique meromorphic connection $\nabla^{BTL}(Z)$ on $\PP^1$, with values in $\Hom(\Gamma, \C) \ltimes \g$, of the form
\begin{equation*}
\nabla^{BTL}(Z) = d - \left(\frac{Z}{t^2} + \frac{f(Z)}{z}\right)dt,
\end{equation*}
such that $\nabla^{BTL}(Z)$ has Stokes data (generalized monodromy) at $t = 0$   
given by the rays $\ell_{\gamma}(Z)$ and factors $S_{\ell_{\gamma}(Z)}$ of \eqref{eq:stokesdata}. The family $\nabla^{BTL}(Z)$ extends to a holomorphic isomonodromic family of (trivially framed) meromorphic connections on $\PP^1$ on all of $\mathcal{U}$.
\end{theorem}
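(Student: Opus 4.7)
The plan is to prove existence by constructing canonical fundamental solutions directly as in \cite{bt_stab}, using an integral-equation fixed-point method parallel to Section~\ref{sec:gmn}, and to deduce uniqueness and isomonodromy from the general theory of meromorphic connections on $\PP^1$. Concretely, for generic $Z$ I would seek a sectionally holomorphic $X^{BTL}(t, Z) \colon \C^* \to \exp(\widehat{\g})$ with jumps $\mathrm{Ad}\, S_{\ell_{\gamma, Z}}$ across the anti-Stokes rays $\ell_{\gamma, Z}$, with prescribed asymptotics at $t = 0$ in each sector, and with at most polynomial growth as $t \to \infty$. The natural integral operator is obtained from $\mathcal{Z}$ of Section~\ref{subsec:operator} by replacing the symmetric kernel $\rho(z, z')$ with the standard Cauchy kernel $(2\pi i)^{-1}(t' - t)^{-1}$ and setting the ``base point'' to $X^{0, BTL}(t)(e_\alpha) = e_\alpha e^{-Z(\alpha)/t}$, reflecting the presence of only one irregular singularity at $t = 0$ and the requirement of polynomial growth at $t = \infty$. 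Integration is carried out along the rays along which $X^{0, BTL}$ decays exponentially, so that the iterated integrands remain controlled.

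Iteration from $X^{(0)} = X^{0, BTL}$ produces a fixed point as a sum over rooted decorated trees $T$,
\begin{equation*}
X^{BTL}(t, Z)(e_\alpha) = X^{0, BTL}(t)(e_\alpha)\, \exp_*\bra \alpha, -\sum_T W_T(Z)\, G^{BTL}_T(t, Z)\ket,
\end{equation*}
where the propagators $G^{BTL}_T$ satisfy a recursion analogous to \eqref{propagators} but with the Cauchy kernel, and are essentially the iterated multilogarithms of \cite{bt_stab, bt_stokes}. Convergence in the pro-nilpotent topology on $\widehat{\g}$ follows from positivity and the support property by estimates parallel to Lemma~\ref{lem:admis}; Plemelj's theorem produces the correct jumps, exactly as in \eqref{JumpLemma}. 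Setting $\nabla^{BTL}(Z) = d - (\partial_t X^{BTL})(X^{BTL})^{-1}\, dt$ defines a meromorphic connection on $\PP^1$ because the jumps are $t$-independent.

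The crucial structural step is to identify the pole orders: expanding $(t'-t)^{-1} = -t^{-1} + O(t^{-2})$ near $t = \infty$ shows that $X^{BTL}$ has only a simple pole there, so the global connection has at most a simple pole at infinity, while the formal type at $t = 0$ constrains the pole there to be of order two with leading coefficient $Z$. The residue at infinity is identified with the Joyce generating function as the explicit tree sum
\begin{equation*}
f(Z) = \sum_T W_T(Z)\, \tfrac{1}{2\pi i}\int_{\ell_{\gamma_T, Z}} X^{0, BTL}(t')(e_{\gamma_T}) \textstyle\prod_{T'} G^{BTL}_{T'}(t', Z)\, dt'.
\end{equation*}
Isomonodromy and the holomorphic extension of $\nabla^{BTL}(Z)$ across the non-generic locus in $\mathcal{U}$ follow from the Kontsevich-Soibelman wall-crossing formula and a Morera argument in each quotient $\g_{\leqslant k}$, exactly as in Section~\ref{sec:gmn:sub:finish}; holomorphy in $Z$ (rather than only real-analyticity) is automatic here because the Cauchy kernel is $\bar Z$-independent. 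Uniqueness is essentially formal: the formal type at $t=0$ together with the Stokes data pin down the germ of $\nabla^{BTL}$ at $t=0$ up to analytic isomorphism, and the constraint that the global form be $d - (Z/t^2 + f(Z)/t)\, dt$ with no further poles rigidly fixes the global gauge.

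The main obstacle I expect is proving convergence of the tree sum to a well-defined residue $f(Z) \in \widehat{\g}$ with the correct analytic dependence on $Z$. Unlike the Bessel-type estimates of Section~\ref{BesselLemma}, the plain Cauchy kernel provides only polynomial rather than exponential decay in the relevant integrals, and moreover the integrands may develop mild singularities at the endpoint $t' = 0$ which must be regularised carefully. Controlling these features requires the full pro-nilpotent structure of $\widehat{\g}$ together with the support property, and is the delicate technical core of \cite{bt_stab, bt_stokes}, where one actually verifies that the resulting residue coincides with the Joyce holomorphic generating function.
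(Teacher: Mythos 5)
This statement is cited from \cite{bt_stab, bt_stokes} and is not proved in the present paper; what the paper does instead is to recover the BTL connection as the $R \to 0$ conformal limit of the connections $\nabla(RZ)$ (Theorem~\ref{thm:limit}), which gives a new, indirect proof of the existence part. Your sketch aims at a direct fixed-point construction in the spirit of Section~\ref{sec:gmn}, so it is fair to compare it with the conformal-limit integral equation \eqref{intEqu}--\eqref{intEqulimit}.

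There is a concrete gap at the heart of your construction: the plain Cauchy kernel $(2\pi i)^{-1}(t'-t)^{-1}$ does not yield convergent integrals. Your base solution $X^{0,BTL}(t')(e_\alpha) = e_\alpha e^{-Z(\alpha)/t'}$ decays only as $t' \to 0$ along the chosen ray; as $t' \to \infty$ it tends to $1$. The integrand $(t'-t)^{-1}\,X^{0,BTL}(t')(e_\alpha)$ therefore decays only like $1/|t'|$ at the far end of the ray and is not integrable, so the first iteration already diverges. This is not the ``delicate regularisation'' you flag at the end—it is a failure of the basic construction as stated. The correct kernel is the modified Cauchy kernel $\frac{1}{2\pi i}\frac{t}{t'(t'-t)}$, i.e.\ $\frac{1}{2\pi i}\bigl(\frac{1}{t'-t} - \frac{1}{t'}\bigr)$, which is exactly what emerges in the $R\to 0$ limit of $\rho$ in \eqref{intEqulimit} and is the kernel that underlies the multilogarithm integrals of \cite{bt_stokes}. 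This kernel both improves the decay at $t' \to \infty$ (to $O(|t'|^{-2})$) and vanishes at $t=0$, which is precisely what pins the fixed point $\widehat h(t)$ to be the identity at $t=0$ rather than $X^{0,BTL}$.

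Relatedly, the claim that ``expanding $(t'-t)^{-1}=-t^{-1}+O(t^{-2})$ near $t=\infty$ shows that $X^{BTL}$ has only a simple pole there'' is too quick. With the correct kernel, the naive $t\to\infty$ limit of the propagator integrals involves a $\int_\ell \frac{dt'}{t'}\exp(Z(\gamma)/t')$ whose integrand decays only like $1/|t'|$, so the limit does not obviously exist; the paper establishes the absence of a constant (holomorphic) piece and hence a simple pole at $\infty$ not by expanding the kernel but by a symmetry argument in Section~\ref{sec:limit:sub:finish}, comparing the two scalings $z = Rt$ and $z = t/R$. The remainder of your outline (Plemelj for the jumps, Morera/Liouville for isomonodromy and holomorphic extension, and the formal-type-plus-Stokes-data characterisation for uniqueness via \cite{Boalch3} and \cite[Thm.~6.6]{bt_stokes}) is in line with what the paper does, and the observation that holomorphy in $Z$ (rather than mere real analyticity) follows from the $\bar Z$-independence of the kernel is a genuine and correct simplification relative to the $\nabla(Z)$ case.
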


\noindent Recall from Section \ref{sec:gdef} that $\Hom(\Gamma, \C) \ltimes \widehat{\g}$ lies naturally in $D^*(\widehat{\g})$ as a Lie subalgebra, and hence $\nabla^{BTL}(Z)$ induces a $D^*(\widehat{\g})$-valued connection with the same Stokes factors as $\nabla(Z)$ in Theorem \ref{prop:GMNconstruction}.

\begin{theorem}\label{thm:limit}
The limit $\lim_{R \to 0} g(R) \cdot \nabla_t(RZ)$ exists and has the form
\begin{equation*}
\widehat{\nabla}_t = d - \left( - \frac{Z}{t^2} + \frac{\tilde{f}(Z)}{t}\right)dt, 
\end{equation*}
with the same Stokes data as $\nabla^{BTL}( Z)$. It follows that $\widehat{\nabla}_t$ actually coincides with the $D^*(\widehat{\g})$-valued connection induced by $\nabla^{BTL}(-Z)$. 
\end{theorem}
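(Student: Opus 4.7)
My plan is to prove the theorem in three steps, exploiting the explicit sum-over-graphs formula \eqref{iterativeSol} for $X(z,Z)$ together with the symmetry relation \eqref{conjugation}, and reducing the final identification to the uniqueness part of Theorem \ref{BTLthm}. The first step analyses the conformal limit of the graph integrals \eqref{propagators}: substituting $z' = Rs$, the ray $\ell_{\gamma_T}(RZ)$ coincides with $\ell_{\gamma_T}(Z)$ and the kernel $\rho$ is scale-invariant, while the exponential factor $X^0(z',RZ)(e_{\gamma_T})$ becomes $\exp(Z(\gamma_T)/s + R^2 s\bar Z(\gamma_T))$. Passing to the $R \to 0$ limit under the integral sign, justified by the Bessel-type bounds of section \ref{BesselLemma} (uniform for $R$ bounded), and arguing inductively on the depth of $T$, one obtains explicit holomorphic limits
\[
\widehat G_T(t) \;=\; \int_{\ell_{\gamma_T}(Z)} \frac{ds}{s}\,\rho(t,s)\,e^{Z(\gamma_T)/s}\prod_{T'}\widehat G_{T'}(s),
\]
and analogously $\widehat G_T^0$; these are multilogarithm-type functions directly comparable to those of \cite{bt_stab}. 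Because each quotient $\g_{\leqslant k}$ receives contributions from only finitely many trees $T$, the limits $\widehat X(t) = \lim_{R\to 0} X(Rt,RZ)$ and $\widehat g = \lim_{R\to 0} g(RZ)$ exist and retain the form dictated by \eqref{iterativeSol}--\eqref{eq:gZ}.

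The second step pins down the pole structure of $\widehat{\nabla}_t$, viewed as the limit of $\nabla_t(RZ)$ after the gauge transformation normalising the leading coefficient. The framing identity $g(RZ)^{-1}\cdot\mathcal{A}^{(-1)}(RZ) = -RZ$ is compatible with the $R^{-1}$ rescaling at $t=0$, so the leading coefficient of the limit is exactly $-Z$. For the would-be pole at $t = \infty$ the symmetry \eqref{conjugation} is decisive: it yields $\mathcal{A}^{(1)}(RZ) = R\,g(RZ)^*\cdot\bar Z$, so the rescaled contribution $R\mathcal{A}^{(1)}(RZ) = O(R^2)$ vanishes in the limit and no irregular singularity survives at infinity. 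The remaining data produces a simple pole at $t=0$ with residue $\tilde f(Z) \in \widehat{\g}$, whence $\widehat{\nabla}_t$ is a meromorphic connection on $\PP^1$ of precisely the stated form.

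The third step identifies $\widehat{\nabla}_t$ with the $D^*(\widehat{\g})$-valued connection induced by $\nabla^{BTL}(-Z)$. The formal type of $\widehat{\nabla}_t$ at $t=0$ is $d + Z/t^2\,dt$, matching that of $\nabla^{BTL}(-Z)$. Because conjugation by a $t$-constant gauge preserves Stokes factors up to $\Ad$, and these are continuous in $R$ by the isomonodromy assertion of Theorem \ref{prop:GMNconstruction}, the Stokes factors of $\widehat{\nabla}_t$ along each anti-Stokes ray equal the elements $\Ad S_{\ell_{\gamma,Z}}$ of \eqref{eq:stokesdata2}, which are precisely the prescribed factors of $\nabla^{BTL}(-Z)$ under the natural embedding $\Hom(\Gamma,\C)\ltimes\widehat{\g} \hookrightarrow D^*(\widehat{\g})$. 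A short verification, formally identical to the argument in section \ref{sec:GMN:sub:Stokes}, confirms that the fundamental solutions built from $\widehat X(t)$ have the correct asymptotics at $t=0$. The uniqueness assertion of Theorem \ref{BTLthm} then identifies the two connections. The principal obstacle is the coupled control, as $R \to 0$, of the infinite sum over trees defining $\widehat X(t)$ on sectors away from anti-Stokes rays together with the asymptotic conditions pinning down fundamental solutions; structurally, it is the symmetry \eqref{conjugation} that makes the limit a first-order BTL-type connection rather than a generic irregular one.
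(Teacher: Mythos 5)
Your first step breaks down at a point the paper explicitly flags as the whole reason for introducing the gauge $g(R)$. After the substitution $z' = Rs$, the graph integrals become
\begin{equation*}
G_T(Rt, RZ) = \int_{\ell_{\gamma_T}(Z)} \frac{ds}{s}\,\rho(t,s)\, X^0(s,Z)(e_{\gamma_T})\,e^{(R^2-1)s\bar Z(\gamma_T)}\prod_{T'}G_{T'}(Rs,RZ),
\end{equation*}
and for $s$ large on the ray the kernel $\frac{1}{s}\rho(t,s)$ behaves like $\frac{1}{4\pi i\, s}$, while the surviving exponential factor $e^{R^2 s\bar Z(\gamma_T)}$ decays only at rate $R^2$. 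As $R\to 0$ this decay vanishes and the integral diverges logarithmically; the paper makes exactly this observation in the Remark in section 4.1 for the single-ray (Ooguri--Vafa) example. The Bessel estimates of section~\ref{BesselLemma} are exponentially strong as $R\to\infty$ but useless for $R\to 0$, so they do not justify passing the limit under the integral. In particular neither $\lim_{R\to 0}G_T(Rt,RZ)$ nor $\lim_{R\to 0}g(RZ)$ exists: both diverge, and only their combination $g^{-1}(R)\,X(Rt,RZ)$ converges. The mechanism is a cancellation: gauging by $g^{-1}(R)$ subtracts precisely the constant $\frac{1}{4\pi i}$ from the kernel, replacing $\frac{1}{z'}\rho(z,z')=\frac{1}{4\pi i\,z'}+\frac{1}{2\pi i}\frac{z}{z'(z'-z)}$ by the improved kernel $\frac{1}{2\pi i}\frac{z}{z'(z'-z)}=O(|z'|^{-2})$. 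This is the content of the modified integral equation \eqref{intEqu}, whose derivation (via the combinatorial identity $\sum\text{disconnected}=\exp\sum\text{connected}$ applied to the action of $g^{-1}(R)$ on the tree expansion of $\sigma'(Rt)$) is the technical core of the paper's proof and is entirely absent from your plan.

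Your second step is a genuinely different idea and worth comparing. You want to conclude $R\,\tilde{\A}^{(1)}(R)\to 0$ from \eqref{conjugation}, i.e.\ $\A^{(1)}(RZ)=R\,g(RZ)^*\cdot\bar Z$. This requires control on $g(RZ)^*\bar Z(g(RZ)^*)^{-1}$ as $R\to 0$, which is not obvious precisely because $g(RZ)$ itself diverges. The paper avoids estimating $g(RZ)$ entirely: it reruns the conformal-limit argument with the inverted coordinate $z=t/R$, with the same gauge $g(R)$, obtaining finite limits for $R^{-i}\tilde{\A}^{(i)}(R)$, and in particular the finiteness of $\lim_R R^{-1}\tilde{\A}^{(1)}(R)$ immediately forces $R\tilde{\A}^{(1)}(R)=R^2\cdot R^{-1}\tilde{\A}^{(1)}(R)\to 0$. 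This exploits a true symmetry of the construction ($0\leftrightarrow\infty$ on $\PP^1$) rather than the conjugation relation, and it piggy-backs on the convergence machinery already built in the first step rather than demanding new bounds. Your third step is roughly aligned with the paper (formal types, Stokes factors, uniqueness via Boalch and Bridgeland--Toledano Laredo), though the paper needs the supersector analytic-continuation argument to actually verify the asymptotics, which you only gesture at. But as it stands the proposal cannot get off the ground, because the objects it works with ($\widehat G_T$, $\widehat g$) do not exist.
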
 
The opposite sign is due to different conventions: if $\nabla^{BTL}(Z)$ is defined for $Z \in \mathbb{H}^n$ then $\nabla(RZ)$ is defined on $(-\mathbb{H})^n$. Notice that in particular Theorem \ref{thm:limit} gives a new proof of the conformal invariance of Joyce functions: these must be invariant under scaling because $\nabla^{BTL}( Z)$ is the limit point of the scaling.

\begin{remark}
Note that the $R \to 0$ limit of $\nabla_t (RZ)$ does not exist if we do not gauge by $g(R)$. This can be seen already in the single-ray solution discussed in Example \ref{sec:GMN:sub:OV}: we have
\begin{equation*}
R^{-1}\A^{(-1)}(R Z) =  Z - Z (\gamma) \sum_{k > 0} \frac{1}{4 k \pi i }\int_{\ell_{\gamma}} \frac{dz'}{z'} \exp(L(z', R Z)(k\gamma)) \ad(e_{k\gamma}).
\end{equation*}
One can check that the integrals $\int_{\ell_{\gamma}} \frac{dz'}{z'} \exp(L(z', R Z)(k\gamma))$ are real, positive and diverge logarithmically as $R \to 0$. 
\end{remark}

\begin{remark}\label{InverseScaling1} One may also consider another limit
 \begin{equation*}
Z \mapsto RZ,\quad z = \frac{t}{R}, \quad R \to 0
\end{equation*}
i.e. the rescaled connections
\begin{equation*}
\nabla'_{t}(RZ) = d - \left(\frac{1}{t^2} R \A^{(-1)}(RZ) + \frac{1}{t} \A^{(0)}(RZ) + R^{-1} \A^{(1)}(RZ) \right)dt.
\end{equation*}
This is very similar to the conformal limit, but it is useful for some purposes. In particular we will use it in the proof of Lemma \ref{LimitConnectionLemma}. We will see that the analogue of Theorem \ref{thm:limit} holds, with the \emph{same} choice of gauge transformations $g(R)$, and with limiting connections of the form
\begin{equation*}
\widehat{\nabla}'_{t} = d - \left(\frac{\tilde{f}(\bar{Z})}{t} + \bar{Z}\right)dt.
\end{equation*}
 \end{remark}

The proof of Theorem \ref{thm:limit} is carried out in several steps in sections  \ref{sec:limit:sub:start} - \ref{sec:limit:sub:Stokes}. An extension to the case of symmetric stability data is briefly discussed in Section \ref{symmetricConformal}. The main technical tool is the introduction of a new integral equation, namely
\begin{align}\label{intEqu}
\nonumber h(t)(e_{\alpha}) = &e_{\alpha} \exp\Big(\frac{1}{2\pi i} \sum_{\alpha'}\bra \alpha, \alpha'\ket\dt(\alpha')\\ &\int_{\ell_{\alpha'}}\frac{d z}{z}\frac{t}{z - t} \exp(  Z(\alpha') \z^{-1} +   R^2 \bar{Z}(\alpha') z) h(z)(e_{\alpha'}) \Big),
\end{align}
which admits a well-defined $R \to 0$ limit 
\begin{equation}\label{intEqulimit}
h(t)(e_{\alpha}) =  e_{\alpha} \exp\Big(\frac{1}{2\pi i} \sum_{\alpha'}\bra \alpha, \alpha'\ket\dt(\alpha') \int_{\ell_{\alpha'}}\frac{d z}{z}\frac{t}{z - t} \exp(  Z(\alpha') \z^{-1}) h(z)(e_{\alpha'}) \Big).
\end{equation}
Solutions of \eqref{intEqulimit} correspond to fundamental solutions for the limiting connection $\widehat{\nabla}_t$. The calculation of the Stokes factors for $\widehat{\nabla}_t$ and the proof of uniqueness are discussed in Section \ref{sec:limit:sub:Stokes}; the latter follows essentially from \cite[Theorem 6.6]{bt_stokes}. 

\subsection{A new integral operator}\label{sec:limit:sub:start} As usual we work in a fixed, arbitrary finite-dimensional quotient $\g_{\leqslant k}$. So it makes sense to consider convex sectors between consecutive anti-Stokes rays with nontrivial $\Omega$. In the following we always denote by $\Sigma$ one such sector, bounded by rays $\ell_{\gamma}, \ell_{\gamma'}$ with $\Omega(\gamma), \Omega(\gamma') \neq 0$. We assume that $\ell_{\gamma'}$ follows $\ell_{\gamma}$ in the clockwise order. The \emph{supersector} $\widehat{\Sigma}$ corresponding to $\Sigma$ is the open region bounded by $e^{-i\frac{\pi}{2}}\ell_{\gamma}, e^{i\frac{\pi}{2}}\ell_{\gamma'}$. 

Our strategy to show that the limit $\widehat{\nabla}_t$ exists is to show first that the local flat sections of $\nabla(RZ)_{z = Rt}$ given by the restriction of $X(R t , R Z)$ to a sector have a finite $R \to 0$ limit after gauging them by $g^{-1}(R)$, i.e. to show that the limit
\begin{equation*}
\widehat{X}(t, Z) = \lim_{R \to 0} g^{-1}(R) X(R t, R Z)
\end{equation*}
exists. Notice that 
\begin{equation*}
\lim_{R \to 0} X^0(R t, R Z) (e_{\alpha}) = e_{\alpha} \exp_*(t^{-1} Z(\alpha) ). 
\end{equation*}
As $X(R t, R Z)$ is the composition $Y(R t, R Z) X^0(R t, R Z)$ in $\Aut^*(\widehat{\g})$, it is enough to consider the limit
\begin{equation*}
\lim_{R\to 0} g^{-1}(R) Y(R t, R Z).
\end{equation*}
In order to study this limit we consider the $\Aut^*(\widehat{\g})$-valued function $h$, holomorphic in $\Sigma$, given by
\begin{equation*}
h(t , Z, R) = g^{-1}(R) Y(R t, R Z).
\end{equation*}
\begin{lem}\label{IntEquationLemma}
The function $h(t, Z, R)$ is a solution of the integral equation
\begin{align}\label{intEqu0}
\nonumber h(t)(e_{\alpha}) = &e_{\alpha} \exp\Big(\frac{1}{2\pi i} \int^t_{t_0} dt'\sum_{\alpha'}\bra \alpha, \alpha'\ket\dt(\alpha')\\ &\int_{\ell_{\alpha'}}\frac{d z'}{( z' - t')^2} \exp(  Z(\alpha') \z'^{-1} +   R^2 \bar{Z}(\alpha') z') h(z')(e_{\alpha'}) \Big),
\end{align}
where the outer integral is computed along some path in the simply connected domain $\Sigma$ starting from a fixed base point $t_0$.
\end{lem}

\begin{proof}
By definition, for all $\alpha \in \Gamma$ we have
\begin{equation*}
Y(R t)(e_{\alpha}) = e_{\alpha} * \exp_* \bra \alpha, \sigma(R t) \ket
\end{equation*}
where $\sigma(R t)$ is a sum of integrals of the form 
\begin{equation}\label{sigma}
\frac{1}{4\pi i}\int_{\ell_{\alpha'}} \frac{d\z'}{\z'} \frac{\z' + Rt}{\z' - Rt}\,I(\z', R),
\end{equation}
one for each graph $\T$ in a class of decorated, rooted trees. Here $I(z', R)$ is still an iterated integral, with values in $\Gamma \otimes \widehat{\g}$, of the form 
\begin{equation*}
\alpha'\dt(\alpha')\exp( R (Z(\alpha') \z'^{-1} + \bar{Z}(\alpha') \z') ) e_{\alpha'} * r(z', R)
\end{equation*}
for some $\g$-valued function $r(z', R)$. By rescaling $z' \to Rz'$, we can rewrite each term in \eqref{sigma} as
\begin{equation*} 
\frac{1}{4\pi i} \int_{\ell_{\alpha'}} \frac{d\z'}{\z'} \frac{\z' + t}{\z' - t}\,I(R\z', R), 
\end{equation*}
so now $I(R\z', R)$ has the form
\begin{equation*}
\alpha'\dt(\alpha')\exp(Z(\alpha') \z'^{-1} + R^2 \bar{Z}(\alpha') \z') e_{\alpha'} * r(z', R).
\end{equation*}
Next we compute
\begin{align*}
\del_t Y(R t)(e_{\alpha}) = \bra \alpha, \sigma'(R t)\ket * Y(R t)(e_{\alpha})
\end{align*}
where $\sigma'(R t)$ is again a sum of terms labelled by the same diagrams $ T$, of the form
\begin{equation*}
\frac{1}{4\pi i}\int_{\ell_{\alpha'}}\frac{d\z'}{\z'} \frac{2 R \z'}{(\z' - R t)^2} I(\z', R) = \frac{1}{2\pi i}\int_{\ell_{\alpha'}}\frac{d\z'}{(\z' - t)^2} I(R\z', R).
\end{equation*}
Going back to $h(t)$, we have
\begin{align*}
\del_t h(t)(e_{\alpha}) &= g^{-1}(R) \del_t  Y(Rt)(e_{\alpha})\\
&= g^{-1}(R) ( \bra \alpha, \sigma'(R t)\ket *  Y(R t)(e_{\alpha}) )\\
&= g^{-1}(R) ( \bra \alpha, \sigma'(R t)\ket ) * h(t)(e_{\alpha}).
\end{align*}

In the last equation we used the algebra automorphism property of $ g^{-1}$. By the same property, the factor $ g^{-1}(R) ( \bra \alpha, \sigma'(R t)\ket ) $ splits into a sum of terms of the form 
\begin{equation*}
 g^{-1}(R)\left( \frac{1}{2\pi i}\int_{\ell_{\alpha'}}\frac{d\z'}{(\z' - t)^2} \bra\alpha, I(R\z', R)\ket \right),
\end{equation*}
one for each $ T$. In each finite-dimensional quotient $\g_{\leqslant k}$, the latter term can be rewritten as
\begin{equation}\label{intermediate}
\frac{1}{2\pi i}\int_{\ell_{\alpha'}}\frac{d\z'}{(\z' - t)^2} g^{-1}( \bra\alpha, I(R\z', R)\ket ).
\end{equation}
Recalling the iterative definition of $I(R\z', R)$, this equals in turn
\begin{equation*}
\frac{1}{2\pi i}\bra \alpha, \alpha'\ket\dt(\alpha')\int_{\ell_{\alpha'}}\frac{d\z'}{(\z' - t)^2} \exp(  Z(\alpha') \z'^{-1} +   R^2 \bar{Z}(\alpha') \z') g^{-1} (e_{\alpha'} \bra \alpha', I'(R\z', R) \ket)
\end{equation*}
for some ``residual" (iterated) integral $I'(R\z', R)$. Indeed, the above rewriting can be seen as the operation of removing the root $a$ (labelled by $\alpha'$) from the fixed tree $\T$ corresponding to \eqref{intermediate}, leaving a finite set of disconnected trees $\T \setminus \{a\}$. Notice that we allow the empty tree in this residual set, corresponding to a factor $1$. Now we let the original $\T$ behind \eqref{intermediate} vary among all trees with root $a$ labelled by $\alpha'$, and sum over all the corresponding integrals, getting for each fixed $\alpha'$,
\begin{align*}
\frac{1}{2\pi i}\bra \alpha, \alpha'\ket\dt(\alpha')\int_{\ell_{\alpha'}}\frac{d\z'}{(\z' - t)^2} & \exp(  Z(\alpha') \z'^{-1} +   R^2 \bar{Z}(\alpha') \z')\\ & g^{-1}(e_{\alpha'} \bra \alpha', \sum_{\rm disconnected} I'(R\z', R) \ket).
\end{align*}
By a standard combinatorial principle
\begin{equation*}
\sum \rm{disconnected}  = \exp \left(\sum \rm{connected}\right),
\end{equation*}
so for each $\alpha'$ the last integral equals 
\begin{align*}
\frac{1}{2\pi i}\bra \alpha, \alpha'\ket\dt(\alpha')\int_{\ell_{\alpha'}}\frac{d\z'}{(\z' - t)^2} & \exp(  Z(\alpha') \z'^{-1} +   R^2 \bar{Z}(\alpha') \z') \\& g^{-1}(e_{\alpha'} \exp \bra \alpha', \sum_{\rm connected} I'(R\z', R) \ket).
\end{align*}
that is  
\begin{equation*}
\frac{1}{2\pi i} \bra \alpha, \alpha'\ket\dt(\alpha')\int_{\ell_{\alpha'}}\frac{d\z'}{(\z' - t)^2} \exp(  Z(\alpha') \z'^{-1} +   R^2 \bar{Z}(\alpha') \z') h( z')(e_{\alpha'}).
\end{equation*}
The upshot is that we have found an integro-differential equation for $h(t)$, namely
\begin{align*}
\del_t h(t)(e_{\alpha}) = &\Big(\frac{1}{2\pi i} \sum_{\alpha'}\bra \alpha, \alpha'\ket\dt(\alpha')\\&\int_{\ell_{\alpha'}}\frac{d\z'}{(\z' - t)^2} \exp(  Z(\alpha') \z'^{-1} +   R^2 \bar{Z}(\alpha') \z') h(\z')(e_{\alpha'}) \Big) * h(t)(e_{\alpha}).
\end{align*}
We can turn this into the integral equation \eqref{intEqu0}.
\end{proof}

The following result follows now from a careful application of Fubini's Theorem.

\begin{lem}\label{FubiniLemma}
The function $h(t, Z, R)$ is a solution of the integral equation \eqref{intEqu}.
\end{lem}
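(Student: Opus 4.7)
The plan is to start from \eqref{intEqu0}, swap the outer $t'$-integral and the inner $z'$-integral along $\ell_{\alpha'}$ by Fubini, compute the elementary antiderivative in $t'$, and finally send the base point $t_0$ to $0$ non-tangentially to every anti-Stokes ray bounding $\Sigma$. The two relevant identities are
\begin{equation*}
\int_{t_0}^{t}\frac{dt'}{(z'-t')^2} = \frac{1}{z'-t} - \frac{1}{z'-t_0}, \qquad \frac{1}{z'-t}-\frac{1}{z'} = \frac{t}{z'(z'-t)},
\end{equation*}
which together produce exactly the kernel $\frac{1}{z'}\frac{t}{z'-t}$ of \eqref{intEqu} once $t_0 \to 0$.

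To justify Fubini I would work in a fixed finite-dimensional quotient $\g_{\leqslant k}$, in which only finitely many values of $\alpha'$ contribute, and choose the outer path from $t_0$ to $t$ to lie in the interior of $\Sigma$, so that it remains at positive distance from every $\ell_{\alpha'}$. Then $|z'-t'|^{-2}$ is uniformly bounded on the product of this compact path with $\ell_{\alpha'}$. The exponential factor $\exp(Z(\alpha')z'^{-1} + R^2 \bar Z(\alpha') z')$ decays at both ends of the ray $\ell_{\alpha'} = -\R_{>0}Z(\alpha')$, exactly as in the basic estimates for \eqref{basicIntegral} of section \ref{BesselLemma}. Admissibility of $h$ makes $h(z')(e_{\alpha'})$ bounded on $\ell_{\alpha'}$, so the full integrand of the double integral is absolutely integrable and Fubini applies.

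The $t_0 \to 0$ step relies on the boundary behaviour built into the definition $h(t, Z, R) = g^{-1}(R) Y(Rt, RZ)$: by \eqref{eq:limitY} one has $Y(z, RZ) \to g(R)$ as $z \to 0$ non-tangentially to the anti-Stokes rays, so $h(t_0) \to \Id$ and the argument of the exponential in \eqref{intEqu0} vanishes at the lower endpoint, leaving no additive constant. For the $t_0 \to 0$ limit of the residual integral $\int_{\ell_{\alpha'}}(z'-t_0)^{-1}(\cdots)\,dz'$ the same exponential decay combined with dominated convergence yields $\int_{\ell_{\alpha'}}z'^{-1}(\cdots)\,dz'$, which is finite because $\exp(Z(\alpha')z'^{-1})$ kills the $z'^{-1}$ singularity at $0$ and the second exponential controls the integrand at $\infty$.

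The main technical point I expect to have to verify carefully is that the non-tangential approach of $t_0$ to $0$, the admissibility of $h$ at the origin and the Fubini estimates are all simultaneously compatible and uniform in the path chosen inside $\Sigma$; once these are in place the identification of the two kernels is mechanical, reducing to the single explicit antiderivative and one partial fraction identity above.
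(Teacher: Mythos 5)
Your proposal matches the paper's own argument: the paper likewise applies Fubini at a fixed base point $t_0 \in \Sigma$ to swap the $t'$- and $z'$-integrals, evaluates the same elementary antiderivative to obtain the kernel $(t-t_0)/((t-z)(t_0-z))$, and then sends $t_0 \to 0$ to produce the kernel $\frac{1}{z'}\frac{t}{z'-t}$ of \eqref{intEqu}. You also correctly make explicit the role of the boundary condition $h(t_0)\to \mathrm{Id}$ via \eqref{eq:limitY}, which the paper handles tersely with ``By a limiting argument, we can now choose $t_0 = 0$.''
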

\begin{proof}
Since $h(t,RZ)$ is asymptotic to the identity as $t \to 0,\infty$, for all $R \geq 0$ we have
\begin{equation*}\int^t_{t_0} \int_{\ell_{\gamma_{\T}}} \left|dt' \frac{d z}{(z - t')^2} \exp(  Z(\alpha') z^{-1} +   R^2 \bar{Z}(\alpha') z ) h(z)(e_{\alpha'})\right| < \infty.
\end{equation*}
At the same time, if $\ell_{\alpha'}$ is not contained in $\Sigma$, the integral
\begin{equation*}\int_{\ell_{\alpha'}} \int^t_{t_0} \left|dt' \frac{d z}{(z - t')^2} \exp(  Z(\alpha') z^{-1} + R^2 \bar{Z}(\alpha') z ) h(z)(e_{\alpha'})\right|
\end{equation*}
equals
\begin{equation*}
\int_{\ell_{\alpha'}} dz \exp(Z(\alpha') z^{-1} + R^2 \bar{Z}(\alpha') z ) \left|h(z)(e_{\alpha'})\right|\int^t_{t_0} \left| \frac{dt'}{(z - t')^2}\right|.
\end{equation*}
Since the integration path from $t_0$ to $t$ is compact for a fixed $t$, the inner integral is $O(|z|^{-2})$ as $z \to \infty$. Thefore the second integral is also finite for all $R \geq 0$. Then for a fixed $t$ we can apply Fubini to rewrite
\begin{equation*}
\dt(\alpha')\int^t_{t_0} dt' \int_{\ell_{\alpha'}}\frac{d z}{( z - t')^2} \exp(  Z(\alpha') \z^{-1} +   R^2 \bar{Z}(\alpha') z) h(z)(e_{\alpha'})
\end{equation*}
as
\begin{equation*}\label{Hfunction2}
\dt(\alpha')(t - t_0)\int_{\ell_{\alpha'}}\frac{d z}{(t - z)(t_0 - z)} \exp(Z(\alpha') z^{-1} + R^2 \bar{Z}(\alpha') z ) h(z)(e_{\alpha'}).
\end{equation*}
By a limiting argument, we can now choose $t_0 = 0$ (which strictly speaking is not in $\Sigma$).
\end{proof}
\begin{remark}\label{InverseScaling2} Completely analogous results hold for the alternative scaling introduced in Remark \ref{InverseScaling1}, with the \emph{same} choice of gauge transformations $g(R)$. In particular one considers the limit as $R \to 0$ of flat sections of $g(R) \cdot \nabla'_{t}(RZ)$,
\begin{equation*}
\widehat{X}'(t, Z) = g^{-1}(R) X(R^{-1} t, R Z).
\end{equation*}
Notice that 
\begin{equation*}
\lim_{R \to 0} X^0(R^{-1} t, R Z) (e_{\alpha}) = e_{\alpha} \exp_*(t \bar{Z}(\alpha) ). 
\end{equation*}
As $X(R^{-1} t, R Z)$ is the composition $Y(R^{-1} t, R Z) X^0(R^{-1} t, R Z)$ in $\Aut^*(\widehat{\g})$, it is enough to study the existence of the limit for the function
\begin{equation*}
h'(t, Z, R) =  g^{-1}(R) Y(R^{-1} t, R Z).
\end{equation*}
Again this is done by showing first that $h'(t, Z, R)$ satisfies a suitable integral equation, analogous to \eqref{intEqu}. Indeed following the proofs of Lemma \ref{IntEquationLemma} and Lemma \ref{FubiniLemma} verbatim, replacing the scaling $z' \mapsto Rz'$ with $z' \mapsto R^{-1} z'$, shows that $h'(t, Z, R)$ is a solution to 
\begin{align}\label{InverseIntEqu}
\nonumber h'(t)(e_{\alpha}) = &e_{\alpha} \exp\Big(\frac{1}{2\pi i} \sum_{\alpha'}\bra \alpha, \alpha'\ket\dt(\alpha')\\ &\int_{\ell_{\alpha'}}\frac{d z}{z}\frac{t}{z - t} \exp(R^2Z(\alpha') \z^{-1} + \bar{Z}(\alpha') z) h'(z)(e_{\alpha'}) \Big).
\end{align}
\end{remark}
\subsection{Fixed point}\label{sec:BTLfixed} Of course \eqref{intEqu} looks quite similar to the integral equation for $X(z)$. The advantage is that now it is easy to compute $R \to 0$ limits. To see this we leave aside $h(t, Z, R) = g^{-1}(R) Y(R t, R Z)$ for a moment and consider instead solutions of \eqref{intEqu} obtained by iteration just as in Section \ref{sec:GMN:sub:fixed}. By the definition of $g(R)$ we have, for all $R > 0$, 
\begin{equation*}
\lim_{t\to 0} g^{-1}(R) Y(R t, R Z) = I,
\end{equation*}
so we look at the solution $\tilde{h}(t)$ obtained by iteration starting from $\tilde{h}^{0}(t) = I$. A priori $h, \tilde{h}$ are different solutions. We will prove that in fact $h = \tilde{h}$ and that the latter has a well-defined $R \to 0$ limit, so that the same holds for $h$.
\begin{remark}
Notice that all the integrals
\begin{equation*}
\int_{\ell_{\alpha'}}\frac{d z}{z}\frac{1}{z-t} \exp(Z(\alpha') z^{-1} + R^2 \bar{Z}(\alpha') z ) e_{\alpha'}
\end{equation*} 
(corresponding to $\tilde{h}^{0}(t) = I$) have a well defined $R \to 0$ limit, namely
\begin{equation*}
\int_{\ell_{\alpha'}}\frac{d z}{z}\frac{1}{z-t} \exp(Z(\alpha') z^{-1}) e_{\alpha'}.
\end{equation*} 
So at least the first iteration from $\tilde{h}^{0}(t) = I$ has a well defined limit as $R \to 0$. This does \emph{not} happen for the first iteration of the integral equation for $X$ starting from $X^0$: that is already divergent. 
\end{remark}

Just as in the case of the integral operator $\mathcal{Z}$ of Section \ref{sec:gmn} we have an expression for the iterative solution of \eqref{intEqu}, namely (with the usual notation)
\begin{equation*}
\tilde{h}(t)(e_{\alpha}) = e_{\alpha}\exp \bra \alpha, -\sum_{\T} \W_{\T} \H_{\T}(t)\ket,
\end{equation*}
where
\begin{equation}\label{Hfunction3}
\H_{\T}(t) =  e_{\gamma_{\T}} * \frac{1}{2\pi i} t \int_{\ell_{\gamma_{T}}}\frac{d z}{z} \frac{1}{z - t} \exp(Z(\gamma_{\T}) z^{-1} + R^2 \bar{Z}(\gamma_{\T}) z ) \prod_j \H_{\T_j}(z).
\end{equation}
(with the usual convention that $H_{\emptyset}(t) = 0$ and that an empty product equals $1$).
\begin{lem}
The limit of $\tilde{h}(t)$ as $R \to 0$ exists and is given by 
\begin{equation}\label{fZero}
\lim_{R \to 0}\tilde{h}(t)(e_{\alpha}) = e_{\alpha}\exp \bra \alpha, - \sum_{ T} \W_{ T} \widehat{H}_{ T}(t)\ket,
\end{equation}
where
\begin{equation}\label{HZero1}
\widehat{ H}_{\T}(t) =  e_{\gamma_{\T}} * \frac{1}{2\pi i} t \int_{\ell_{\gamma_{T}}}\frac{d z}{z} \frac{1}{z - t} \exp(Z(\gamma_{\T}) z^{-1}) \prod_j \widehat{H}_{\T_j}(z).
\end{equation}

\end{lem}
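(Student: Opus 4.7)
The plan is to prove the pointwise convergence $H_T(t) \to \widehat H_T(t)$ as $R \to 0^+$ for each decorated rooted tree $T$ by induction on the number of vertices $|T^0|$, and then deduce the conclusion for $\tilde h(t)$ by projecting to each finite-dimensional quotient $\g_{\leqslant k}$, where thanks to the positivity of the support of $\Omega$ only finitely many trees contribute to any fixed $\Gamma$-graded component, so the sum and limit commute.

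For the base case, let $T$ be a single-vertex tree rooted at $\gamma$ and parametrise $\ell_\gamma$ by $z = -sZ(\gamma)$, $s \in (0, \infty)$. On this ray
\begin{equation*}
\bigl|\exp(\pi Z(\gamma) z^{-1} + \pi R^2 \bar Z(\gamma) z)\bigr| = \exp(-\pi/s)\exp(-\pi R^2 s |Z(\gamma)|^2) \leq \exp(-\pi/s)
\end{equation*}
uniformly in $R \geq 0$. Since $1/|z| = 1/(s|Z(\gamma)|)$ and $|1/(z-t)|$ is bounded (because $t \in \Sigma$ is away from $\ell_\gamma$), the integrand of \eqref{Hfunction3} is dominated by a function of the form $C\exp(-\pi/s)/s$, which is integrable on $(0, \infty)$. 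Dominated convergence then yields $H_T(t) \to \widehat H_T(t)$, matching \eqref{HZero1}. For the inductive step, I would strengthen the inductive claim to assert not merely the pointwise limit $H_{T'}(z) \to \widehat H_{T'}(z)$ for subtrees $T'$, but also an $R$-uniform bound of the type $|H_{T'}(z)| \leq C_{T'}\exp(-\pi/s')/s'^{N_{T'}}$ along each relevant ray $\ell_\alpha$ parametrised as $z = -s'Z(\alpha)$, with $C_{T'}, N_{T'}$ depending only on the combinatorial data of $T'$. Granted this, the integrand in \eqref{Hfunction3} for the parent tree $T$ is once again $R$-uniformly dominated along $\ell_{\gamma_T}$, and dominated convergence produces \eqref{HZero1} at the next stage. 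The strengthened bound itself is propagated by the same parametrisation argument used in the base case, together with the basic estimates of Section \ref{BesselLemma}.

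The main technical obstacle is the $R$-uniform domination of the iterated integrand along the outer ray $\ell_{\gamma_T}$: while the oscillatory factor $\exp(\pi R^2 \bar Z(\gamma_T) z)$ has modulus bounded by $1$ uniformly in $R$ on this ray (since $\bar Z(\gamma_T) z$ is negative real there), the subtree contributions $H_{T_j}(z)$ are themselves line integrals whose decay depends on the relative position of $z$ with respect to each inner ray $\ell_{\gamma_{T_j}}$. Here I would invoke the genericity assumption on $Z$ to ensure that all rays are distinct and meet transversally, and the positivity assumption on the spectrum to ensure that all the rays $\ell_{\gamma, Z}$ lie in a fixed half-plane $\mathbb H'$, bounding the relative angles uniformly away from $0$ and $\pi$. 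With these inputs the constants $C_{T'}, N_{T'}$ of the strengthened induction can be controlled combinatorially, and the lemma follows by exponentiating the sum over connected decorated trees, exactly as in the combinatorial identity already used in the proof of the previous lemma.
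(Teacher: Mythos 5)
Your overall strategy — induct on tree size, establish $R$-uniform bounds on the subtree functions $H_{T_j}$, and conclude by dominated convergence — is essentially the same as the paper's; the paper also carries an $R$-uniform inductive hypothesis on $H_{T_j}$ and passes to the limit inside the integral. Your base case computation is correct (and the dominating function $\exp(-\pi/s)\min(1,1/s)/s$ is indeed integrable, the decay at $s\to\infty$ coming from the kernel factor).

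However, the strengthened inductive claim you propose is false, and this is a genuine gap. You assert $|H_{T'}(z)| \leq C_{T'}\exp(-\pi/s')/s'^{N_{T'}}$ for $z=-s'Z(\alpha)$ on the parent's ray. But the subtree function $H_{T'}(z)$, evaluated at $z$ on a ray \emph{distinct} from $\ell_{\gamma_{T'}}$, does not vanish exponentially as $z\to 0$: its defining integral has exponential decay in $z'$ along $\ell_{\gamma_{T'}}$, but the evaluation point $z$ enters only through the kernel $z/(z'-z)$, and a direct estimate (using that distinct rays meet transversally, so $|z'-z|\geq c\max(|z'|,|z|)$) gives $H_{T'}(z)=O(|z|)$ as $z\to 0$, not $O(e^{-\pi/s'})$. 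Worse, at $z\to\infty$ one finds $H_{T'}(z)=O(\log|z|)$, which \emph{grows}; your proposed $s'^{-N}$ decay cannot hold. The exponential factor $\exp(-\pi/s')$ you want to carry through the induction is not a property of $H_{T'}$; it is supplied by the $X^0$ factor $\exp(Z(\gamma_T)z^{-1})$ appearing in the \emph{parent} tree's integrand, which you already have in hand when dominating \eqref{Hfunction3} and need not — and cannot — attribute to the subtree.

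The fix is to weaken the inductive hypothesis to what is actually true and needed: $H_{T_j}(z)$ is uniformly (in $R$) bounded as $z\to 0$ and of order $o(|z|^{\varepsilon})$ for every $\varepsilon>0$ as $z\to\infty$. This is what the paper uses, and it suffices: along $\ell_{\gamma_T}$ the parent integrand is dominated by $\exp(-\pi/s)\,\min(1,1/s)\,s^{-1}\cdot o(s^{\varepsilon})$, which is integrable independently of $R$, and the convergence of the integrand pointwise as $R\to 0$ follows from the inductive step. You must then verify that the conclusion $H_T(t)$ again satisfies the (weakened) hypotheses — bounded near $0$ and $O(\log|t|)\cdot o(|t|^\varepsilon)=o(|t|^\varepsilon)$ at $\infty$, uniformly in $R$ — to close the induction; this is the step the paper carries out explicitly and which your proposal, relying on the incorrect bound, cannot reproduce.
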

\begin{proof}
We work with the alternative formula 
\begin{equation}\label{Hfunction}
\H_{\T}(t) =  e_{\gamma_{\T}} * \frac{1}{2\pi i} \int^t_{t_0} dt' \int_{\ell_{\gamma_T}}\frac{d z}{(z - t')^2} \exp(  Z(\gamma_{\T}) z^{-1} +   R^2 \bar{Z}(\gamma_{\T}) z ) \prod_j \H_{\T_j}(z).
\end{equation}
Let as assume inductively that each $\H_{\T_j}(z)$ is of order $o(|z|^{\eps})$ as $z \to \infty$ for all $\eps > 0$ (i.e. it grows less than any positive power), and that this holds uniformly in $R$ (this is certainly true for the identity). Similarly let us assume inductively that each $\H_{\T_j}(z)$ is bounded as $z \to 0$. The result will then follow applying Fubini's Theorem similarly as before. By hypothesis, one can show that the inner integral is convergent and in fact of order $O(|t'|^{-1}) \cdot o(|t'|^{\eps})$ for all $\eps > 0$ as $t' \to \infty$, uniformly as $R \to 0$. At the same time it is uniformly bounded near $t' = 0$ for all $R$. In particular $\H_{\T}(t)$ does have a well-defined $R \to 0$ limit, namely just
\begin{equation*}
\widehat{\H}_{\T}(t) = e_{\gamma_{\T}} * \frac{1}{2\pi i}\int^t_{t_0} dt' \int_{\ell_{\gamma_T}}\frac{d z}{(z - t')^2} \exp(  Z(\gamma_{\T}) z^{-1}) \prod_j \widehat{H}_{ T_j}(z).
\end{equation*}
Going back to \eqref{Hfunction}, we need to check the inductive hypotheses. But since the inner integral is $O(|t'|^{-1}) \cdot o(|t'|^{\eps})$ uniformly, $\H_{\T}(t)$ is $O(\log |t|) \cdot o(|t|^{\eps})$ for all $\eps > 0$ as $|t| \to \infty$, which is again of order $o(|t|^{\eps})$ for all $\eps > 0$. Also, the integrand is uniformly bounded as $t' \to 0$ for all $R$, so the same is true for $ H_{ T}(t)$ as $t\to 0$.
\end{proof}

\begin{lem}
We have $h(t, Z, R) = \tilde{h}(t, Z, R)$. In particular the function $h(t)$ has a finite limit in $\Aut^*(\widehat{\g})$ as $R \to 0$, which we denote by $\widehat{h}(t)$, and which is given by \eqref{fZero}, \eqref{HZero1}.
\end{lem}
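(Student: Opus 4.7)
My plan is to establish the equality $h = \tilde{h}$ by proving uniqueness of solutions of the integral equation \eqref{intEqu} within the class of $\Aut^*(\widehat{\g})$-valued holomorphic functions on $\Sigma$ satisfying the boundary condition $h(t) \to I$ as $t \to 0$ inside $\Sigma$. Once uniqueness is established, the identification $h = \tilde{h}$ is automatic, and the existence of $\widehat{h}(t) = \lim_{R \to 0} h(t)$ follows immediately from the explicit limit \eqref{fZero} of the previous lemma.

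First I would verify that both candidates satisfy the boundary condition at $t = 0$. For $\tilde{h}$ this is immediate from the iterative construction: the factor $t$ in the kernel $\frac{t}{z-t}$ of \eqref{intEqu} (equivalently, the explicit $t$ outside the integral in \eqref{Hfunction3}) forces each $\H_{\T}(t) \to 0$ as $t \to 0$, and this propagates through the iteration to give $\tilde{h}(t) \to I$. For $h(t) = g^{-1}(R) Y(Rt, RZ)$ the defining property $\lim_{z \to 0} Y(z, RZ) = g(R)$, valid along directions non-tangential to anti-Stokes rays (which is precisely the situation inside the sector $\Sigma$), yields $h(t) \to I$ as $t \to 0$ within $\Sigma$.

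For uniqueness I plan to work in each finite-dimensional nilpotent quotient $\g_{\leqslant k}$ and induct on the filtration $\g_{>m}$. Suppose $h_1, h_2$ are two solutions of \eqref{intEqu} satisfying the boundary condition, and assume they agree modulo $\g_{>m}$ as commutative-algebra automorphisms. The right-hand side of \eqref{intEqu} applied to $e_\alpha$ is of the form $e_\alpha \exp_*(\mathrm{exponent})$, where the exponent is a sum of terms each carrying a factor $h(z)(e_{\alpha'})$ with $\alpha' \in \Gamma_{\geqslant 0}\setminus\{0\}$; the strict convexity of the cone $\Gamma_{\geqslant 0}$ ensures that, once the exponential is expanded and multiplied against $e_\alpha$, every correction to $e_\alpha$ lies strictly deeper in the filtration than the input. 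Consequently, modulo $\g_{>m+1}$ the right-hand side of \eqref{intEqu} is determined by the reduction of $h$ modulo $\g_{>m}$, so $h_1$ and $h_2$ also agree modulo $\g_{>m+1}$. Finiteness of the nilpotent length of $\g_{\leqslant k}$ terminates the induction, yielding $h_1 = h_2$ in $\g_{\leqslant k}$; passing to the inverse limit gives $h_1 = h_2$ in $\widehat{\g}$.

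The principal technical subtlety is making the filtration bookkeeping of the inductive step precise: the full image $h(z)(e_{\alpha'})$ carries infinitely many corrections above its leading term $e_{\alpha'}$, and one must check that these, together with the higher powers arising from expanding $\exp_*$, all fall into the required filtration step. Positivity of the stability data is exactly what provides this control, which is why the argument is given here only in the positive case; the adaptation to symmetric spectra (via the lift to $\g[[\tau]]$) will require a parallel bookkeeping in the $\tau$-adic filtration, as briefly indicated in section \ref{symmetricRemark}.
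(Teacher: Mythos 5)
Your proposal is correct as a strategy, but it takes a genuinely different route from the paper. The paper argues via Liouville's theorem: since $h$ and $\tilde{h}$ solve the same integral equation, the composition $h^{-1}\tilde{h}$ in $\Aut^*(\widehat{\g})$ has no jumps and is therefore holomorphic on all of $\C^*$; both $h$ and $\tilde{h}$ are bounded as $t\to 0$ and as $t\to\infty$, so by Liouville (applied in each $\g_{\leqslant k}$) the composition is $t$-constant, and since $\lim_{t\to 0}h^{-1}(t)\tilde{h}(t) = I$ it must be the identity. You instead prove uniqueness of the fixed point directly by induction on the filtration $\g_{>m}$, exploiting the pro-nilpotent structure of $\widehat{\g}$: agreement of two solutions modulo $\g_{>m}$ forces the exponents in the integral equation to agree modulo $\g_{>m}$, and the extra factor of $e_{\alpha'}$ with $\alpha'\neq 0$ pushes the discrepancy one step further to $\g_{>m+1}$. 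This inductive step is sound (your attribution of the mechanism to ``strict convexity'' is a bit loose — the gain comes from multiplication by $e_{\alpha'}$ for $\alpha'\in\Gamma_{\geqslant 0}\setminus\{0\}$; strict convexity is what makes $\widehat{\g}$ complete and pro-nilpotent in the first place). Both approaches work; yours is more elementary and makes the role of positivity explicit, while the paper's Liouville argument is shorter and more directly parallels the uniqueness step already used in the construction of $X(z,Z)$ in section \ref{sec:gmn:sub:finish}. One precision worth adding to your write-up: the natural domain for the uniqueness argument is not $\Sigma$ alone but all of $\C^*$ (with the jump structure), since the integral equation \eqref{intEqu} evaluates $h$ along the rays $\ell_{\alpha'}$, which need not lie inside the fixed sector $\Sigma$; the class should be sectionally holomorphic functions on $\C^*$ with the prescribed asymptotics at $0$ and $\infty$, which is precisely the admissibility notion used throughout.
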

\begin{proof}
Both $h(t)$ and $\tilde{h}(t)$ are solutions to \eqref{intEqu}. One can check that this implies that $h^{-1}(t)\tilde{h}(t)$ (the composition in $\Aut^*(\widehat{\g})$) is holomorphic on $\C^*$. On the other hand both $h(t)$ and $\tilde{h}(t)$ are bounded as $t \to 0$ and $t \to \infty$. For $h(t)$ this follows from the same property for $Y(R t, R Z)$, while for $\tilde{h}(t)$ it follows from \eqref{Hfunction3} above. So $h^{-1}(t)\tilde{h}(t) \in \Aut^*(\widehat{\g})$ is a constant, and since $\lim_{t \to 0} h^{-1}(t)\tilde{h}(t) = I $ by construction, it must be the identity.
\end{proof}  
\begin{remark}\label{InverseScaling3} Identical results hold for the function $h'(t) = g^{-1}(R) Y(R^{-1} t, R Z)$ introduced in Remark \ref{InverseScaling2}. The key observation is that by the definition of $g(R)$ we still have for all fixed $R > 0$
\begin{equation*}
\lim_{t\to 0} g^{-1}(R) Y(R^{-1} t, R Z) = I,
\end{equation*}
so we still compare with solutions, this time of equation \eqref{InverseIntEqu}, constructed by iteration from $(\tilde{h}')^0 = I$. This shows that the function $h'(t, Z, R)$ has a well defined limit as $R \to 0$, acting by 
\begin{equation*} 
\lim_{R \to 0}h'(t, Z, R)(e_{\alpha}) = e_{\alpha}\exp \bra \alpha, - \sum_{ T} \W_{ T} \widehat{H}'_{ T}(t)\ket,
\end{equation*}
where
\begin{equation*} 
\widehat{ H}'_{\T}(t) =  e_{\gamma_{\T}} * \frac{1}{2\pi i} t \int_{\ell_{\gamma_{T}}}\frac{d z}{z} \frac{1}{z - t} \exp(\bar{Z}(\gamma_{\T}) z ) \prod_j \widehat{H}'_{\T_j}(z).
\end{equation*}
\end{remark}
\subsection{The limit connection $\widehat{\nabla}_t$ from limit flat sections}\label{sec:limit:sub:finish} We are now in a position to set
\begin{align*}
\widehat{X}(t) = \lim_{R \to 0} g^{-1}(R) X(R t, R Z) = \lim_{R \to 0} g^{-1}(R) Y(R t, R Z) X^0(R t, R Z) = \widehat{h}(t) \widehat{X}^0(t),
\end{align*}
where
\begin{equation*}
\widehat{X}^0(t)(e_{\alpha}) = e_{\alpha} \exp_*(t^{-1} Z(\alpha) ). 
\end{equation*}
Notice that $\widehat{X}^0(t)$ is naturally a flat section for the connection
\begin{equation*}
\widehat{\nabla}^0 = d - \left( - \frac{Z}{t^2}\right)dt.
\end{equation*}
This implies that, in each sector $\Sigma$, $\widehat{X}(t, Z)$ is a flat section of the pullback connection $\widehat{\nabla}|_{\Sigma} = \widehat{h}^{-1}(t)|_{\Sigma} \cdot \widehat{\nabla}^0$. By precisely the same argument as in Section \ref{sec:gmn}, the $\widehat{\nabla}|_{\Sigma}$ for various $\Sigma$ glue to a connection on $\C^* \subset \PP^1$ in each finite-dimensional quotient $\g_{\leqslant k}$, and taking an inverse limit we find a well-defined $D^*(\widehat{\g})$-valued connection $\widehat{\nabla}$. 
\begin{lem}\label{LimitConnectionLemma} The limit connection has the form $\widehat{\nabla}_t = d - \left( - \frac{Z}{t^2} + \frac{\tilde{f}}{t}\right)dt$ for some $t$-constant derivation $\tilde{f}$.
\end{lem}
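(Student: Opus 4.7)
The plan is to express $\widehat{\A}(t)$ in terms of $\widehat{h}(t)$ alone, show that $\widehat{\A}(t)$ is meromorphic on $\PP^1$ with poles only at $t=0$ and $t=\infty$, and then constrain its shape by analyzing its singularities. Since $\widehat{X}^0(t)(e_\alpha) = e_\alpha \exp(Z(\alpha)/t)$, the derivation $-Z/t^2 \in \Hom(\Gamma,\C) \subset D^*(\widehat{\g})$ acts diagonally on $\widehat{X}^0(e_\alpha)$ and therefore commutes with the automorphism $\widehat{X}^0$. From $\widehat{X} = \widehat{h}\widehat{X}^0$ this immediately gives
\begin{equation*}
\widehat{\A}(t) = (\partial_t \widehat{h})\widehat{h}^{-1} + \widehat{h}(-Z/t^2)\widehat{h}^{-1},
\end{equation*}
and by the same jump-cancellation argument as for $\nabla(Z)$ in section \ref{sec:gmn} (the Stokes automorphisms by which $\widehat{h}$ jumps across the anti-Stokes rays $\ell_{\gamma,Z}$ are $t$-independent) this expression extends to a $D^*(\widehat{\g})$-valued meromorphic function on $\C^*$; projected to each finite-dimensional quotient $\g_{\leqslant k}$ it becomes a rational function on $\PP^1$.

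Next I would analyze the two singular points separately. Near $t = 0$, the factor of $t$ outside the integral in \eqref{HZero1} gives $\widehat{H}_T(t) = O(t)$ for every tree $T$, so $\widehat{h}(t) = \Id + O(t)$, and a term-by-term expansion yields $\widehat{h}(-Z/t^2)\widehat{h}^{-1} = -Z/t^2 + O(1/t)$ and $(\partial_t \widehat{h})\widehat{h}^{-1} = O(1)$, so that $\widehat{\A}(t) = -Z/t^2 + O(1/t)$ has principal part $-Z/t^2$ and pole of order at most two. Near $t = \infty$ I would differentiate \eqref{HZero1} under the integral sign to get
\begin{equation*}
\partial_t \widehat{H}_T(t) = \frac{e_{\gamma_T}}{2\pi i}\int_{\ell_{\gamma_T}}\frac{dz}{(z-t)^2}\exp(Z(\gamma_T)/z)\prod_j \widehat{H}_{T_j}(z),
\end{equation*}
substitute $z = -sZ(\gamma_T)$, $u = t/Z(\gamma_T)$, $s = uy$ to rewrite this as a $1/u$ prefactor times $\int_0^\infty (y+1)^{-2} e^{-1/(uy)}\prod_j \widehat{H}_{T_j}(-uyZ(\gamma_T))\,dy$, and apply dominated convergence using the inductive fact that each $\widehat{H}_{T_j}(z)$ has a finite limit as $z \to \infty$ along its ray. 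This gives $\partial_t \widehat{H}_T(t) = O(1/t)$ as $t \to \infty$, hence $(\partial_t \widehat{h})\widehat{h}^{-1} = O(1/t)$, while $\widehat{h}(-Z/t^2)\widehat{h}^{-1} = O(1/t^2)$ by boundedness of $\widehat{h}$ at infinity; combining, $\widehat{\A}(t) \to 0$ as $t \to \infty$.

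The conclusion is then automatic: a rational function on $\PP^1$ with a pole of order at most two at $t = 0$, no poles on $\C^*$, and vanishing at $t = \infty$ must have the form $-Z/t^2 + \tilde{f}/t$ for a $t$-constant derivation $\tilde{f}$, and the inverse limit over $k$ yields a unique $\tilde{f} \in D^*(\widehat{\g})$. The hard part is the asymptotic at $t = \infty$: because $\exp(Z(\gamma_T)/z)$ merely tends to $1$ (rather than decaying) as $z \to \infty$ along $\ell_{\gamma_T}$, one cannot na\"ively expand $(z-t)^{-2}$ in a convergent geometric series in $z/t$ inside the integral, and one must carry out the rescaling $s = uy$ together with a careful verification that $(y+1)^{-2}$ provides an integrable dominating function once the inductive information on the lower-depth $\widehat{H}_{T_j}$ is substituted in.
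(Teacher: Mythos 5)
Your argument takes a genuinely different route from the paper's. The paper's own proof is indirect: it observes that the gauge $g(R)$ works equally well for the opposite scaling $z = t/R$, so that finiteness of the limits $\lim_{R\to 0}R^{-i}\tilde{\A}^{(i)}(R)$ in one scaling forces $\lim_{R\to 0}R\,\tilde{\A}^{(1)}(R)=0$ in the other. You instead write $\widehat{\A} = (\partial_t\widehat{h})\widehat{h}^{-1} + \widehat{h}(-Z/t^2)\widehat{h}^{-1}$ and analyze the two singular points directly — a more computational approach whose overall structure (meromorphy on $\C^*$ by jump-cancellation, pole of order $\leq 2$ with principal part $-Z/t^2$ at $0$, vanishing at $\infty$, hence the claimed form) is sound.

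However, there is a real gap in your analysis at $t=\infty$: the claimed inductive fact that each $\widehat{H}_{T_j}(z)$ has a finite limit as $z\to\infty$ along its ray is false. The paper's earlier lemma only establishes (and the functions only satisfy) $\widehat{H}_{T_j}(z) = o(|z|^{\eps})$ for all $\eps>0$; the growth is genuinely unbounded. For instance, for a single-vertex tree with decoration $\alpha$ one computes
\begin{equation*}
\widehat{H}_{T}(t) = e_{\alpha}\,\frac{t}{2\pi i\, Z(\alpha)}\int_0^\infty\frac{e^{-v}\,dv}{1+uv},\qquad u = t/Z(\alpha),
\end{equation*}
and $\int_0^\infty e^{-v}(1+uv)^{-1}dv = u^{-1}e^{1/u}E_1(1/u)\sim u^{-1}\log u$, so $\widehat{H}_T(t)\sim (2\pi i)^{-1}(\log |t|)\,e_{\alpha}$ grows logarithmically. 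Consequently the dominating function $(y+1)^{-2}$ does not dominate $(y+1)^{-2}e^{-1/(uy)}\prod_j\widehat{H}_{T_j}(-uyZ(\gamma_T))$ uniformly in $u$, and the estimate $\partial_t\widehat{H}_T(t)=O(1/t)$ fails: one only gets $O(\mathrm{polylog}(|t|)/|t|)$. Likewise $\widehat{h}$ is not bounded at $\infty$. The argument is nevertheless salvageable: using the true bounds $\widehat{H}_{T} = o(|t|^{\eps})$ and $\partial_t\widehat{H}_T = o(|t|^{\eps-1})$ for all $\eps>0$ (which follow from the basic Bessel-type estimates), one still concludes $(\partial_t\widehat{h})\widehat{h}^{-1}\to 0$ and $\widehat{h}(-Z/t^2)\widehat{h}^{-1}\to 0$, hence $\widehat{\A}\to 0$ at $\infty$ along a ray, which for the rational function $\widehat{\A}_{\leqslant k}$ gives the desired form. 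You should state and use the correct $o(|t|^{\eps})$ bound rather than a finite-limit claim.
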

\begin{proof} Notice that $\widehat{\nabla}_t$ is in fact (a posteriori, i.e. once we know that the limit exists) the $R \to 0$ limit of the connections $g(R) \cdot \nabla_t$, which have the form
\begin{equation*}
g(R) \cdot \nabla_t (Z) = d - \left(\frac{1}{t^2}R^{-1}\tilde{\A}^{(-1)}(R) + \frac{1}{t}\tilde{\A}^{(0)}(R) + R \tilde{\A}^{(1)}(R) \right)dt
\end{equation*} 
for some $t$-constant $D^*(\widehat{\g})$-valued functions $\tilde{A}^{(i)}(R)$, for which $\lim_{R \to 0} R^{i} \tilde{A}^{(i)}(R)$ exist. 

Checking that $\lim_{R \to 0} R^{-1} \tilde{A}^{(-1)}(R) = - Z$ amounts to computing the principal part of $\widehat{\nabla}_t$. This can be computed by looking at the leading asymptotic behaviour as $t \to 0$ of flat sections in sectors. Recall that we have such a section given by $\widehat{X}(t) = \widehat{h}(t) \widehat{X}^0(t)$. The action of $\widehat{h}(t)$ is given by $\eqref{fZero}, \eqref{HZero1}$ and so we have $\widehat{h}(t) \to I$ as $t \to 0$. So $\widehat{X}(t)$ is asymptotic to $\exp(t^{-1}Z)$ as $t \to 0$. This means precisely that the principal part of $\widehat{\nabla}_t$ is $- t^{-2} Z$.

We claim that $\lim_{R \to 0} R \tilde{A}^{(1)}(R) = 0$. This would prove the Lemma, with $\tilde{f} = \lim_{R \to 0} \tilde{A}^{(0)}(R)$. To prove the claim we turn to the alternative scaling introduced in Remark \ref{InverseScaling1}. Recall that the corresponding rescaled connections take the form
\begin{equation*}
\nabla'_{t}(RZ) = d - \left(\frac{1}{t^2} R  \A^{(-1)}(R) + \frac{1}{t} \A^{(0)}(R) + R^{-1}  \A^{(1)}(R) \right)dt
\end{equation*} 
and so 
\begin{equation*}
g(R) \cdot \nabla'_{t}(RZ) = d - \left(\frac{1}{t^2} R  \tilde{\A}^{(-1)}(R) + \frac{1}{t} \tilde{\A}^{(0)}(R) + R^{-1} \tilde{\A}^{(1)}(R) \right)dt.
\end{equation*} 
According to Remarks \ref{InverseScaling2} and \ref{InverseScaling3} these connections have a well defined, finite limit as $R \to 0$, so $\lim_{R \to 0} R^{- i} \tilde{A}^{(i)}(R)$ exist and are finite. This gives no further information for $i = -1, 0$, but for $i = 1$ shows that $\lim_{R \to 0} R^{- 1} \tilde{A}^{(1)}(R)$ exists and is finite, from which it follows that $\lim_{R \to 0} R \tilde{A}^{(1)}(R) = 0$. 
\end{proof}
 
\subsection{Stokes factors at $0$ and uniqueness}\label{sec:limit:sub:Stokes} Regarding $\widehat{h}(t)$ as a formal gauge transformation (i.e. an element of $\Aut^*(\widehat{\g}[[z]])$), we see that the formal equivalence type of $\widehat{\nabla}_t(Z)$ is $d + \frac{Z}{t^2} dt$. Therefore the anti-Stokes rays of $\widehat{\nabla}(t, Z)$ are $\ell_{\alpha}, \alpha \in \Gamma$. Notice also that since $\widehat{X}(t)$ is the $R \to 0$ limit of $g^{-1}(R) X(R t, R Z)$, we have immediately for an anti-Stokes ray $\ell \subset \mathbb{H}'$ and a point $z_0 \in \ell$ 
\begin{equation*} 
\widehat{X}(z^+_0)(e_{\alpha}) = \widehat{X}(z^-_0)(e_{\alpha}) \prod_{Z(\gamma)\in \ell}(1 - \widehat{X}(z_0, Z)(e_{\gamma}))^{\Omega(\gamma) \bra \gamma, \alpha\ket}
\end{equation*}
(informally, $\widehat{X}(z^+_0) = \widehat{X}(z^-_0) \circ \prod_{\gamma \in \ell} T^{\Omega(\gamma)}_{\gamma} $). Recall that $\Sigma$ always denotes a sector bounded by consecutive anti-Stokes rays $\ell_{\gamma}, \ell_{\gamma'}$ with nontrivial $\Omega$. In order to show that $\widehat{\nabla}$ has the required Stokes factors it is enough to prove that the solution $\widehat{h}(t)|_{\Sigma}$ in a fixed sector $\Sigma$ extends to a holomorphic function in the supersector $\widehat{\Sigma}$ bounded by $e^{-i\frac{\pi}{2}}\ell_{\gamma}$ and $e^{i\frac{\pi}{2}}\ell_{\gamma'}$, with $\widehat{h}(t) \to I$ in $\widehat{\Sigma}$.   
\begin{lem} The solution $\widehat{h}(t)|_{\Sigma}$ extends to a solution in the supersector $\widehat{\Sigma}$ which tends to $I$ as $t \to 0$.
\end{lem}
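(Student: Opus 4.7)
My plan is to establish the extension by a contour-deformation argument applied inductively to each graph integral $\widehat{H}_T(t)$ in the explicit expansion \eqref{fZero}, and then to deduce that $\widehat{h}(t) \to I$ as $t \to 0$ in $\widehat{\Sigma}$ from the explicit prefactor $t$ in \eqref{HZero1} combined with uniform bounds on the deformed-contour integrals. I work in a fixed finite-dimensional quotient $\g_{\leqslant k}$ throughout, so only finitely many trees $T$ and integration rays $\ell_{\gamma_T}$ occur.

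The key observation is that, as a function of $z$, the integrand in \eqref{HZero1} is meromorphic on $\CC^*$ with sole singularity a simple pole at $z = t$, decays exponentially as $z \to 0$ throughout the open half-plane $\mathbb{H}_{\gamma_T} = \{z : \re(Z(\gamma_T)/z) < 0\}$ (which contains $\ell_{\gamma_T}$ as its central ray), and decays like $|z|^{-2}$ as $z \to \infty$ thanks to the factor $1/(z(z-t))$. Cauchy's theorem therefore allows us to replace $\ell_{\gamma_T}$ with any other ray contained in $\mathbb{H}_{\gamma_T}$, possibly picking up the residue at $z = t$ if the region swept by the deformation contains $t$. For the base case $|T^0| = 1$, as $t$ varies continuously from $\Sigma$ into $\widehat{\Sigma}$ I continuously deform $\ell_{\gamma_T}$ inside the open sector $\widehat{\Sigma} \cap \mathbb{H}_{\gamma_T}$ (a neighborhood of $\ell_{\gamma_T}$) so that the contour always avoids $t$; this produces a single-valued holomorphic extension of $\widehat{H}_T|_{\Sigma}$ to all of $\widehat{\Sigma}$. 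The inductive step is formally identical: by the inductive hypothesis each inner factor $\widehat{H}_{T_j}(z)$ is already holomorphic on the relevant region and vanishes linearly at $z = 0$ (from the explicit factor of its argument in \eqref{HZero1}), so the integrand remains tame on the deformed outer contour. The main, essentially combinatorial, technical point---verifying that the nested deformations can be arranged consistently---simplifies considerably once one notices that each inner contour depends only on $z$ and may be fixed once and for all, while only the outermost contour needs to track $t$.

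Finally, to prove $\widehat{h}(t) \to I$ as $t \to 0$ in $\widehat{\Sigma}$ it is enough to show $\widehat{H}_T(t) \to 0$ uniformly on compact subsectors. This follows from the explicit prefactor $t$ in \eqref{HZero1} combined with uniform boundedness of the deformed-contour integrals, which in turn follows from the inductive vanishing $\widehat{H}_{T_j}(z) = O(|z|)$ near $0$ together with the exponential decay and the $O(|z|^{-2})$ decay at infinity. Quantitatively these estimates are of the Bessel type treated in \ref{BesselLemma}, and combined with the tree-sum bounds used in \ref{subsec:operator}--\ref{sec:GMN:sub:fixed} they also yield uniform convergence of the sum over $T$ in \eqref{fZero} on compact subsets of $\widehat{\Sigma}$ in each $\g_{\leqslant k}$. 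Hence the exponent in \eqref{fZero} tends to $0$ and $\widehat{h}(t)(e_\alpha) \to e_\alpha$ as required.
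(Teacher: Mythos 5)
Your proposal is correct and follows essentially the same strategy as the paper: induction on the tree $T$, with analytic continuation of each graph integral $\widehat{H}_T(t)$ across $\ell_{\gamma_T}$ obtained by deforming the integration contour inside $\widehat{\Sigma}$ so as to keep it away from the pole at $z=t$, and the vanishing of $\widehat{h}(t)-I$ as $t\to 0$ deduced from the explicit prefactor $t$ in \eqref{HZero1} together with uniform boundedness of the (deformed) integrals. The only imprecision is your assertion that the integrand is meromorphic on all of $\CC^*$ --- because of the branch cuts in the inner factors $\widehat{H}_{T_j}(z)$ it is only meromorphic on the region where the inductively chosen branches are holomorphic, namely $\widehat{\Sigma}$; since all contour deformations take place inside $\widehat{\Sigma}\cap\mathbb{H}_{\gamma_T}$, this does not affect the argument.
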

\begin{proof} The solution $\widehat{h}(t)|_{\Sigma}$ is expressed in terms of the integrals $\widehat{H}_{\T}(t)$ by \eqref{fZero}. Let $T$ be any decorated tree with $\ell_{\gamma_T} \subset \widehat{\Sigma}$ and $\Omega(\gamma_T) \neq 0$. Thus the corresponding function $\widehat{H}_{\T}(t)$ is holomorphic in $\Sigma$ (because we are working in a fixed, arbitrary $\g_{\leqslant k}$, and $\Sigma$ is bounded by \emph{consecutive} rays), and it is holomorphic with a single branch-cut in $\widehat{\Sigma}$ (along the ray $\ell_{\gamma_T}$). We claim that we can choose a branch of such a $\widehat{H}_{\T}(t)$, extending its branch on $\Sigma$ given by the integral representation \eqref{HZero1}, which is holomorphic and vanishes as $t \to 0$ in $\widehat{\Sigma}$. By \eqref{fZero} this would clearly be enough to give the required extension of $\widehat{h}(t)|_{\Sigma}$. 

We argue by induction on the length of $ T$. The result certainly holds for $\widehat{H}_{\emptyset}(t) = 0$. Next look at 
\begin{equation*}
\widehat{H}_{\T}(t) =  e_{\gamma_{\T}} * \frac{1}{2\pi i} t \int_{\ell_{\gamma_\T}}\frac{d z}{z} \frac{1}{z - t} \exp(Z(\gamma_{\T}) z^{-1}) \prod_j \widehat{H}_{\T_j}(z).
\end{equation*}
By induction we assume that we have already chosen the correct branches of $\widehat{H}_{\T_j}(z)$ on $\widehat{\Sigma}$. We write $H(z)$ for the product of these analytic continuations, i.e. on $\Sigma$,
\begin{equation*}
H(z) = \prod_j \widehat{H}_{\T_j}(z).
\end{equation*}
We can assume for definiteness that $\ell_{\gamma_T}$ follows the sector $\Sigma$ in the counterclockwise order (the other case is completely similar). For fixed $t \in \widehat{\Sigma}$ we consider the integral
\begin{equation*}
e_{\gamma_{\T}} * \frac{1}{2\pi i} t \int_{\pi_t}\frac{d z}{z} \frac{1}{z - t} \exp(Z(\gamma_{\T}) z^{-1}) H(z).
\end{equation*} 
where $\pi_t \subset \widehat{\Sigma}$ is any path from $0$ to $\infty$ which lies strictly within the open sector bounded by $\R_{> 0} t$ and $\ell_{\gamma'}$. It is straightforward to check that this integral (depending on $t$) extends the definition of $\widehat{H}_{\T}(t)$ to a holomorphic function on $\widehat{\Sigma}$ which vanishes as $t \to 0$. 
\end{proof}

To prove uniqueness of the conformal limit, we use that $\widehat{\nabla}_t(Z)$ has the same formal type and Stokes factors as $\nabla^{BTL}(-Z)$. From \cite[Theorem 2]{Boalch3}, it follows that $\widehat{\nabla}(Z)$ and $\nabla^{BTL}(-Z)$ are gauge equivalent. By the particular form of the connections (second order pole at zero and no holomorphic terms), the gauge transformation taking $\nabla^{BTL}(-Z)$ to $\widehat{\nabla}(Z)$ must be $t$-constant, given by $g_0 \in \Aut^*(\widehat \g)$. Then, we have
\begin{equation*}
g_0 \tilde f(Z) g_0^{-1} = \ad f(-Z)
\end{equation*}
and therefore $\tilde f(Z)$ is necessarily a divergence-free derivation (see Section \ref{sec:gdef}). The equality $ \widehat{\nabla}(Z) = \nabla^{BTL}(-Z)$ follows now from the uniqueness part of Theorem 6.6 in \cite{bt_stokes}. 

\subsection{Symmetric case}\label{symmetricConformal} The construction of $\widehat{\nabla}(Z)$ above carries over immediately to the symmetric case described in Section \ref{symmetricRemark}, that is when we have a continuous lift $T'^{\Omega}_{\gamma}$, yielding a family of connections with values in derivations of $\g[[\tau]]$. Following the notation there the relevant integral operator takes the form
\begin{align*} 
\nonumber h(t)(e_{\alpha}) = &e_{\alpha} \exp\Big(\frac{1}{2\pi i} \int^t_{t_0} dt'\sum_{\alpha'}\bra \alpha, \alpha'\ket \tau^{p(\alpha', Z)}\dt(\alpha')\\ &\int_{\ell_{\alpha'}}\frac{d z'}{( z' - t')^2} \exp(  Z(\alpha') \z'^{-1} +   R^2 \bar{Z}(\alpha') z') h(z')(e_{\alpha'}) \Big).
\end{align*}  
The $R \to 0$ fixed point is given explicitly by
\begin{equation*} 
h(t)(e_{\alpha}) = e_{\alpha}\exp \bra \alpha, - \sum_{ T} t^{p(T, Z)}\W_{ T} \widehat{H}_{ T}(t)\ket.
\end{equation*}

\section{The $R \to \infty$ limit and tropical geometry}
\label{sec:tropical}
In this Section we give precise statements and proofs of our two results (Theorems \ref{thm:tropical1} and \ref{thm:tropical2}) which relate the $R \to \infty$ behaviour of the sum over graphs expansion \eqref{intro:X} for flat sections to tropical geometry and the Gross-Pandharipande-Siebert approach to wall-crossing. Sections \ref{sec:tropical:sub:statement}-\ref{sec:tropical:sub:curves} are devoted to Theorem \ref{thm:tropical1} and sections \ref{sec:tropical:sub:invts}-\ref{sec:tropical:sub:finish2} to Theorem \ref{thm:tropical2}.
\subsection{General setup}\label{sec:tropical:sub:statement} Recall we place ourselves in the model case when $\Gamma$ is generated by two elements $\gamma, \eta$ with $\bra \gamma, \eta\ket = \kappa > 0$ (this may be regarded as the $\kappa$-Kronecker quiver case). We will choose for definiteness a family $Z \in \mathcal{U}$ parametrised by a connected, open subset $\mathcal{U} \subset \Hom(\Gamma, \C)$ for which $Z(\gamma), Z(\eta)$ lie in the positive quadrant. We will write $Z^{\pm}$ for a point in the open subset $U^{\pm}$ of $\mathcal{U}$ where $\pm \Im Z(\gamma)/Z(\eta) > 0$, and we assume that $U^{\pm}$ are nonempty. We fix the continuous family of stability data on $\g$ parametrised by $\mathcal{U}$ and uniquely characterised by $\Omega(\gamma, Z^+) = \Omega(\eta, Z^+) = 1$, with all other $\Omega(\alpha, Z^+)$ vanishing. The locally constant function $a\!: \Gamma \to \g$ underlying the stability data in $U^+$ is given simply by $a(k\gamma) = -\frac{1}{k^2}\gamma$, $a(h\eta) = -\frac{1}{h^2}\eta$ for $h, k > 0$, with all other values vanishing. While this family may look quite simple the corresponding irregular connections $\nabla(RZ)$ are already almost as complicated as in the most general case. 

Choose a fixed $z^* \in \C^*$ with $\Re z \Im z < 0$. We consider trees $T$ such that $W_T(Z^+) \neq 0$, i.e. their vertices are decorated by positive multiples of the basic vectors $\gamma$ or $\eta$. For each tree $T$ with more than a single vertex, the special function $G_{T}(z^*, RZ)$ is only sectionally holomorphic in $Z \in \mathcal{U}$: it is discontinuous along the critical locus where $\Im Z(\gamma)/Z(\eta) = 0$. 

The idea we wish to implement is very simple: we will rewrite $G_{T}(z^*, R Z^+)$ as a sum of iterated integrals over rays $\ell(Z^-)$, similar in form to $\pm G_{T'}(z^*, R Z^-)$ for various $T'$, with the only difference that the integrands involve $X^0(z, R Z^+)$. Since $X^0$ is continuous in $Z$, $G_{T}(z^*, R Z^+)$ will be asymptotically equal to the sum of these terms $\pm G_{T'}(z^*, R Z^-)$ as $|Z^+ - Z^-| \to 0$, and this gives an effective way to see which linear combination of the special functions $G_{T'}(z^*, R Z^-)$ replaces $G_{T}(z^*, R Z^+)$ in the expansion of $X(z^*, R Z^-)$. The theorem below makes this idea precise, and characterises the single-vertex term $G_{\bullet}(z^*, R Z^-)$ in this linear combination in terms of certain tropical graphs.

\vskip.2cm
\noindent\textbf{Theorem \ref{thm:tropical1} (precise statement).} \emph{There exists an expansion} 
\begin{equation}\label{expansion}
G_{T}(z^*, R Z^+) = \sum_{T'} \pm G_{T'}(z^*, R Z^-) + r(|Z^+ - Z^-|)
\end{equation}
\emph{where $r(|Z^+ - Z^-|) \to 0$ as $|Z^+ - Z^-| \to 0$, and we sum over a finite set of rooted trees $T'$, not necessarily distinct, decorated by $\Gamma$. Let $\beta \in \Gamma$ denote the sum $\sum_i \alpha(i)$ of all decorations of $T$.} 

\emph{The terms corresponding to a single-vertex tree in \eqref{expansion} are labelled by a finite set of graphs $C_i$ containing $|T^0|$ external $1$-valent vertices and with $3$-valent internal vertices. These terms are all equal to $G_{\beta}(z^*, R Z^-)$ up to sign, and differ by a well defined factor $\eps(C_i) = \pm 1$ which is uniquely attached to the graph $C_i$.}

\emph{Moreover the graphs $C_i$ come naturally with extra combinatorial data which endows them with the structure of the combinatorial types of a finite set of rational tropical curves immersed in the plane $\R^2$.}

\emph{Finally the single-vertex terms in \eqref{expansion} are uniquely characterised by their asymptotic behaviour: they are of order}
\begin{equation*}
(2 |Z^-(\beta)| R )^{-1} \exp(- 2 |Z^-(\beta)| R)e_{\beta},
\end{equation*}
\emph{as $R \to \infty$, \emph{uniformly} as $|Z^+ - Z^-| \to 0$.}

\noindent  
\vskip.2cm

The notion of a (type of) rational tropical curve immersed in $\R^2$ is briefly reviewed in Section \ref{sec:tropical:sub:curves}.

For the sake of simplicity we will assume in the course of the proof that the lattice element $\beta$ is primitive in $\Gamma$. In this case the (types of) tropical curves $C_i$ which appear are all connected. The case when $\beta$ is not primitive is only notationally heavier, and involves disconnected $C_i$.  

Before tackling the general case, it is helpful to illustrate the asymptotic expansion for $G_T(R)$ in terms of the graphs $C_i$ starting with the simplest case when $T$ is the decorated tree with a single edge $\gamma \to \eta$. By definition, for $Z^+ \in U^+$ we have 
\begin{align*}
G_{T}(z^*, R Z^+) = &\int_{\ell_{\gamma}(Z^+)} \frac{d z_1}{z_1} \rho(z^*, z_1) X^0(z_1, Z^+)(e_{\gamma})\\ 
&\int_{\ell_{\eta}(Z^+)}\frac{dz_2}{z_2}\rho(z_1, z_2)X^0(z_2, Z^+)(e_{\eta}).
\end{align*} 
One way to compute the change of $G_T$ across the wall is to first rewrite it in terms of integrals over rays $\ell(Z^-)$ for a point $Z^- \in U^-$. Recall that $X^0(z)$ is holomorphic in $\C^*$, and that $\rho(z, z')$ is a meromorphic function of $z'$ with a simple pole at $z' = z$, with $\operatorname{Res}_{z} \rho(z, z') = (2\pi i)^{-1}$. By our choice of $z^*$ and the definitions of $U^{\pm}$, it follows that we can rewrite
\begin{align}
G_{T}(z^*, R Z^+) &= \int_{\ell_{\gamma}(Z^-)} \frac{d z_1}{z_1} \rho(z^*, z_1) X^0(z_1)(e_{\gamma}) \int_{\ell_{\eta}(Z^+)}\frac{dz_2}{z_2}\rho(z_1, z_2)X^0(z_2)(e_{\eta})\label{tropExample1}\\
\nonumber &= \int_{\ell_{\gamma}(Z^-)} \frac{d z_1}{z_1} \rho(z^*, z_1) X^0(z_1)(e_{\gamma}) \int_{\ell_{\eta}(Z^-)}\frac{dz_2}{z_2}\rho(z_1, z_2)X^0(z_2)(e_{\eta})\\
&+ \int_{\ell_{\gamma}(Z^-)} \frac{d z_1}{z_1} \rho(z^*, z_1) X^0(z_1)(e_{\gamma}) * X^0(z_1)(e_{\eta}).\label{tropExample2} 
\end{align}  
where all the $X^0$ are evaluated at $Z = Z^+$. The last term \eqref{tropExample2} comes from the residue theorem when we push $\ell_{\eta}(Z^+)$ over to $\ell_{\eta}(Z^-)$, crossing the first integration ray $\ell_{\gamma}(Z^-)$. Notice that we have the simple but crucial algebra endomorphism property
\begin{equation*}
X^0(z_1)(e_{\gamma}) * X^0(z_1)(e_{\eta}) = X^0(z_1)(e_{\gamma + \eta}).
\end{equation*}  
It is this property that allows to relate the wall-crossing behaviour of the graph integral $G_T$ to tropical curves in $\R^2$ in the general case. 

In the present example, recalling that $X^0$ is continuous across the critical locus where $\Im Z(\gamma) / Z(\eta) = 0$, we find
\begin{equation*}
G_T(z^*, R Z^+) = G_{\gamma + \eta}(z^*, R Z^-) + G_T(z^*, R Z^-) + r(|Z^+ - Z^-|)
\end{equation*}
where $r \to 0$ as $|Z^+ - Z^-|\to 0$. The single-vertex term is the unique term with asymptotic behaviour
\begin{equation*}
G_{\gamma + \eta}(z^*, R Z^-) \sim (2 |Z^-(\gamma + \eta)| R )^{-1} \exp(- 2 |Z^-(\gamma + \eta)| R)e_{\gamma + \eta}  
\end{equation*}
\emph{uniformly} for $|Z^+ - Z^-|\to 0$.

There is an obvious graph $C$ which we can attach to the computation above, displayed in Figure \ref{tripod}. There are edges $E_1, E_2$ labelled by the two factors in \eqref{tropExample1}, and $E_3$ labelled by the residue term \eqref{tropExample2}. 
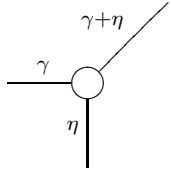
\begin{figure}[ht]
\begin{center}
\centerline{
\xymatrix {
       &      &   & \\
  \ar^{\gamma}@{-}[r]   &   *++[o][F-]{}\ar^{\gamma + \eta}@{-}[ur]  &   & \\
       &   \ar^{\eta}@{-}[u]   &   &}
}
\end{center}
\caption{The simplest tropical type.}\label{tripod}
\end{figure}
These edges meet in a single vertex $V$, and come with attached integral vectors $\alpha(E_1) = \gamma$, $\alpha(E_2) = \eta$ and $\alpha(E_3) = \gamma + \eta$. It is natural to think of $E_1, E_2$ as incoming in $V$, and $E_3$ as outgoing from $V$. Keeping track of this orientation, we have the balancing condition
\begin{equation*}
- \alpha(E_1) - \alpha(E_2) + \alpha(E_3) = 0
\end{equation*} 
due to the algebra endomorphism property of $X^0$.
\subsection{Expansion for $G_T(z^*, R Z^+)$ across the critical locus}\label{sec:tropical:sub:start} Fix a decorated tree $T$ as in Section \ref{sec:GMN:sub:fixed}, with $W_T(Z^+) \neq 0$. We will show that the simple analysis of the previous Section can be carried for $G_T(z^*, RZ)$, up to leading order terms as $R \to 0$, establishing \eqref{expansion}. The precise form of the expansion \eqref{expansion} depends on a choice of total order for the vertices of $T$. We simply fix one such total order, without assuming that it is compatible with the natural orientation of $T$ as a rooted tree (i.e. flowing away from the root).

The proof is recursive: starting from the integral $G_T(z^*, RZ)$ attached to the decorated tree $T$, we introduce an operation which produces a new finite set of graph integrals $S(T)$, also attached to decorated trees, on which the operation can be iterated, and whose sum equals $G_T(z^*, RZ)$ up to a vanishing error term as $|Z^+ - Z^-|\to 0$. We show that the procedure terminates and the resulting expansion for $G_T(z^*, RZ)$ is precisely \eqref{expansion}. 

In order to carry out the iteration process we need to consider a more general class of iterated integrals $I(T')$ which determine totally ordered, decorated rooted trees $T'$. We require that there is a bijective correspondence between vertices of $T'$ and factors of the form
\begin{equation}\label{pieceIntegral}
\int_{\ell_j} \frac{dz_j}{z_j} \rho(z_i, z_j) X^0(z_j, R Z^+)(e_{\alpha(j)})
\end{equation}  
appearing in $I(T')$, such that the factor
\begin{equation*}
\int_{\ell_i} \frac{dz_i}{z_i} \rho(z_k, z_i) X^0(z_i, R Z^+)(e_{\alpha(i)})\int_{\ell_j} \frac{dz_j}{z_j} \rho(z_i, z_j) X^0(z_j, R Z^+)(e_{\alpha(j)})
\end{equation*}  
appears if and only if there is an arrow $i \to j$ in $T'$. Moreover the integration contours must be rays $\ell_i \subset \C^*$, and we require that the following property holds: there is at most a single vertex of $i$ of $T'$ such that $\ell_i$ is not one of the rays $\ell_{\alpha(i)}(Z^{\pm})$. This property will be preserved by our iteration process. Note that the integral $I(T')$ contains more information than just the decorated tree $T'$. 

In particular $T$ is attached to the iterated integral $G_T(z^*, R Z^+)$ in this sense, and obviously satisfies the property. This will be the starting point of our iteration.
\begin{remark} In \eqref{pieceIntegral} we allow $z_i \in \ell_j$, i.e. we allow factors of the form
\begin{equation*}
\lim_{\ell' \to \ell} \int_{\ell'} \frac{d z_j}{\z_j}\rho( z_i,  z_j) X^0(z_j, Z^+)(e_{\alpha(j)})
\end{equation*} 
where $ z_{i} \in \ell$. However in this case \eqref{pieceIntegral} will be decorated with the direction in which $\ell'$ approaches $\ell$, using $\ell' \to\ell^{\pm}$ for the clockwise (respectively counterclockwise) direction.
\end{remark}

Let $T'$ be a tree which is attached to an iterated integral $I(T')$ in the sense above (e.g. the pair $T$, $G_T(z^*, R Z^+)$). We will construct from $I(T')$ a finite set of iterated integrals $S(T')$ attached to trees of the same type, obtained by applying the residue theorem. To save some space we set
\begin{equation*}
X^0_{\alpha}(z) = X^0(z, R Z^+)(e_{\alpha}).
\end{equation*}

In the following, we say that a ray $\ell$ separates $\ell_1, \ell_2$ if $\ell_1, \ell_2$ lie in different connected components of the complement of $\ell$ in the sector between $\ell_{\gamma}(Z^+), \ell_{\eta}(Z^+)$. We allow the limiting case in which $\ell_1 \to \ell$ in a component which does not contain $\ell_2$, or possibly $\ell_1 \to \ell$ and $\ell_2 \to \ell$ in different components. 

Consider the set of vertices $j \in T'$ for which one of the following occurs:
\begin{enumerate}
\item the corresponding factor in $I(T')$ has the form 
\begin{equation*}
\int_{\ell_{\alpha(j)}(Z^+)} \frac{d\z_j}{\z_j}\rho(\z_i, \z_j) X^0_{\alpha(j)}(\z_i) 
\end{equation*}
where $\alpha(j)$ is a positive multiple of $\gamma$ or $\eta$, or
\item it is of the form
\begin{equation*}
\int_{\ell} \frac{d\z_j}{\z_j}\rho(\z_{i}, \z_j) X^0_{\alpha(j)}(\z_j)
\end{equation*}
for some ray $\ell \subset \C^*$ which is not one of $\ell_{\alpha(j)}(Z^{\pm})$. 
\end{enumerate}
Note that the two conditions are obviously mutually exclusive. Recall we are assuming inductively that there is at most one vertex $i$ of $T'$ for which (2) holds. If the set of $j$ satisfying (1) or (2) is empty we simply set $S(T') = \{ T' \}$. Otherwise let us choose the single vertex $j$ satisfying (2), or the first vertex $j$ satisfying (1) with respect to the total order of $T'$ if no vertex satisfies (2). As $T$ is rooted, there is at most one arrow $i \to j$, and possibly several arrows $j \to k$.
Since $j$ satisfies (1) or (2), and in the latter case it is the only such vertex, the factor of $I(T')$ corresponding to $j$ fits into 
\begin{align*}
\int_{\ell_{\alpha(i)}(Z^{\pm})} \frac{d z_{i}}{ z_{i}}\rho( z_h, \z_i) X^0_{\alpha(i)}( z_i)&\int_{\ell} \frac{d z_j}{ z_j}\rho( z_i,  z_j)X^0_{\alpha}( z_j)\\
&\prod_{k}\int_{\ell_{\alpha(k)}(Z^{\pm})}\frac{d z_k}{ z_k}\rho( z_j,  z_k)X^0_{\alpha(k)}( z_k)
\end{align*}
where $\ell$ is either $\ell^+_{\alpha}$ or a ray which is distinct from $\ell^-_{\alpha}$, and $h \to i$. 

If none of the rays $\ell_{\alpha(i)}(Z^{\pm})$ and $\ell_{\alpha(k)}(Z^{\pm})$ separate $\ell$ and $\ell_{\alpha}(Z^-)$, we set $S(T') = \{T''\}$, with $T'' = T'$ and $I(T'')$ obtained from $I(T')$ by replacing $\ell$ in the factor above with $\ell^-_{\alpha(j)}$.

Otherwise we apply Fubini and rewrite the integral above in the form
\begin{align}\label{fubini}
\nonumber  \int_{\ell_{\alpha(i)}(Z^{\pm})} \frac{d z_{i}}{\z_{i}}\rho( z_h,  z_i) X^0_{\alpha(i)}( z_i)&\left(\prod_k\int_{\ell_{\alpha(k)}(Z^{\pm})}\frac{d z_k}{ z_k} X^0_{\alpha(k)}( z_k)\right)\\
&\int_{\ell} \frac{d z_j}{ z_j}\prod_k \rho( z_j,  z_k)\rho( z_i,  z_j) X^0_{\alpha}( z_j).
\end{align}
The function $\frac{1}{ z_j}\prod_k\rho(\z_j,  z_k)\rho( z_i,  z_j) X^0_{\alpha}( z_j)$ is holomorphic in the variable $ z_j \in \C^*\setminus \{ z_i,  z_k\}$, and has simple poles at $ z_i,  z_k$ with residues given respectively by $ - ( 2\pi i)^{-1}\rho( z_i,  z_k) X^0_{\alpha}( z_i)$ and $(2\pi i)^{-1}\rho( z_i,  z_k) X^0_{\alpha }( z_k)$. If we apply the residue theorem (justified by the estimates of integrals along an arc given in Section \ref{BesselLemma}) we can rewrite \eqref{fubini} as
\begin{align}\label{push2}
\nonumber  \int_{\ell_{\alpha(i)}(Z^{\pm})} \frac{d z_i}{ z_i}\rho( z_h,  z_i) X^0_{\alpha(i)}( z_i)&\left(\prod_k\int_{\ell_{\alpha(k)}(Z^{\pm})}\frac{d z_k}{ z_k}X^0_{\alpha(k)}( z_k)\right)\\
&\int_{\ell_{\alpha}(Z^{-})} \frac{d z_j}{ z_j}\prod_k\rho( z_j,  z_k) \rho( z_i,  z_j) X^0_{\alpha}( z_j)
\end{align}
plus residue terms
\begin{equation}\label{res2}
\mp \int_{\ell_{\alpha(i)}(Z^{\pm})} \frac{d z_i}{ z_i}\rho( z_h,  z_i) X^0_{\alpha(i) + \alpha}( z_i)\prod_{k}\int_{\ell_{\alpha(k)}(Z^{\pm})}\frac{d z_k}{ z_k}\rho(z_i, z_k) X^0_{\alpha(k)}( z_k)
\end{equation}
and
\begin{align}\label{res3}
\nonumber \pm \int_{\ell_{\alpha(i)}(Z^{\pm})} \frac{d z_i}{ z_i}\rho( z_h,  z_i) X^0_{\alpha(i)}( z_i)&\int_{\ell_{\alpha(k')}(Z^{\pm})} \frac{d z_{k'}}{ z_{k'}}\rho( z_i,  z_{k'}) X^0_{\alpha(k') + \alpha}( z_{k'})\\
&\prod_{k \neq k'}\int_{\ell_{\alpha(k)}(Z^{\pm})}\frac{d z_k}{ z_k}\rho( z_i,  z_k) X^0_{\alpha(k)}( z_k).
\end{align}
It is understood that the term \eqref{res2} is only present if $\ell_{\alpha(i)}(Z^{\pm})$ separates $\ell$ and $\ell_{\alpha}(Z^-)$, while a term \eqref{res3} appears for each $\ell_{\alpha(k')}(Z^{\pm})$ separating $\ell$, $\ell_{\alpha}(Z^-)$. The signs in \eqref{res2}, \eqref{res3} are determined according to whether $\ell$ moving to $\ell_{\alpha}(Z^{-})$ crosses $\ell_{\alpha(i)}(Z^{\pm})$ (respectively $\ell_{\alpha(k')}(Z^{\pm})$) in the clockwise, respectively counterclockwise direction.
\begin{remark} Following our convention, if $\ell_{\alpha}(Z^-)$ coincides with $\ell_{\alpha(i)}(Z^{\pm})$ or one of the $\ell_{\alpha(k)}(Z^{\pm})$ then the integral over $\ell_{\alpha}(Z^-)$ in \eqref{push2} is actually a limit of integrals over $\ell \to \ell^{\pm}_{\alpha(v)}(Z^-)$.
\end{remark}

We define trees $T''$ in $S(T')$ corresponding to the iterated integrals \eqref{push2}, \eqref{res2}, \eqref{res3}. In the case of \eqref{push2} $T''$ coincides with $T'$. In the case of \eqref{res2} $T''$ is obtained by contracting the edge $i \to j$ in $T'$ to a single vertex decorated by $\alpha(i) + \alpha$. Similarly in the case of \eqref{res3} $T''$ is obtained by contracting the edge $j \to k'$ in $T'$ to a single vertex decorated by $\alpha + \alpha(k')$. Note that for each of these trees $T''$ the condition $(2)$ can hold for at most a single vertex, as required.

Starting from our original pair of $T$ and $I(T) = G_T(z^*, R Z^+)$, by construction the sequence of sets $S(T), S(S(T)), \dots$ stabilises after a finite number of steps; we let $S^{(p)}(T)$ denote the first set for which $S^{(p)}(T) = S^{(p+1)}(T)$. 

This finishes the construction of the expansion \eqref{expansion}. Indeed $G_T(z^*, R Z^+)$ is a sum of terms which are in bijection with elements of $S^{(p)}(T)$, and these all have the form
\begin{equation*}
\pm G_{T'}(z^*, R Z^-) + r_{T'}(|Z^+ - Z^-|)
\end{equation*}
where $r_{T'}(|Z^+ - Z^-|) \to 0$ as $|Z^+ - Z^-| \to 0$. The single-vertex terms in \eqref{expansion} are in bijection with trees $T_p \in S^{(p)}(T)$ which contain a single vertex.

\subsection{Highest order terms} We characterise the single-vertex terms in \eqref{expansion} by their asymptotic behaviour. Let $T_p$ denote a tree in $S^{(p)}(T)$ with a single vertex. Then by construction
\begin{equation*}
I(T_p) = \pm \int_{\ell_{\beta}(Z^-)} \frac{d z'}{ z'} \rho( z,  z') X^0_{\beta}(z').
\end{equation*}
for a unique sign attached to $T^p$ by orientations in the residue theorem, and where $\beta \in \Gamma$ is given by the sum of all the lattice elements attached to the vertices of $T$. By the results of Section \ref{BesselLemma} we have an expansion as $R \to 0$  
\begin{equation*}
I(T_p) \sim (2 |Z^- (\beta)| R )^{-1} \exp(- 2 |Z^-(\beta)| R) e_{\beta}, 
\end{equation*}
which holds uniformly as $|Z^+ - Z^-| \to 0$. 

Suppose now that $T_2 \in S^{(p)}(T)$ is a tree which contains more than a single vertex. We claim that this is subleading, i.e. there is an expansion of the form
\begin{equation}\label{remainder}
I(T_2) \sim \phi(|Z^+ - Z^-|) f(Z^{\pm}, R) e_{\beta}  
\end{equation}
where $\phi$ is bounded and $f$ satisfies 
\begin{equation*}
f(Z^{\pm}, R) = o((2 |Z^- (\beta)| R )^{-1} \exp(- 2 |Z^-(\beta)| R)).
\end{equation*}
Indeed by construction in this case we have
\begin{equation*} 
I(T_2) = \int \prod_{\{i \to j\}\subset \widetilde{T}_2} \frac{d z_j}{ z_j} \rho(z_i, z_j) X^0_{\alpha(j)}( z_j),
\end{equation*}
where the tree $\widetilde{T}_2$ contains an extra vertex $0$ mapping to the root of $T_2$, with $z_0 = z^*$. Iterating the argument in Section \ref{BesselLemma} sufficiently many times, one can then show that \eqref{remainder} holds with $f(Z^{\pm}, R)$ bounded by 
\begin{equation*}
\prod_{i} \frac{1}{2 R |Z^-(\alpha(i))|} e^{- 2 R |Z^-(\alpha(i))|}.
\end{equation*}  
This decays faster than $(2 |Z^- (\beta)| R )^{-1} \exp(- 2 |Z^-(\beta)| R)$, because 
\begin{equation*}
\sum_{i} |Z^-(\alpha(i))| > |Z^-(\beta)|. 
\end{equation*} 
This follows immediately from our assumption that $\beta$ is primitive in $\Gamma$ (using that $Z^-$ is nondegenerate). 

\subsection{Tropical graphs attached to highest order terms} It is now a simple matter to show that a tree $T_p \in S^{(p)}(T)$ containing a single vertex determines a graph $C$ containing only $1$-valent and $3$-valent vertices, and whose edges are decorated by elements of $\Gamma$. We will show that these extra data satisfy two relations,  which imply that $C_i$ is the combinatorial type of a tropical curve in $\R^2$ (this notion will also be recalled).  

The tree $T_p$ determines a unique  sequence of trees $T_r \in S^{(i)}(T)$, $r = 0, \dots, p$, its ancestors, with $T_0 = T$. Moreover by construction there are natural maps between the set of vertices 
\begin{equation*}
\varphi_r\!:  T^{0}_r \to  T^{0}_{r+1},
\end{equation*}
such that $\varphi_r$ is either a bijection, or maps two vertices $i_1, i_2$ to the same vertex $i \in  T^{0}_{r+1}$ (and is a bijection on $ T^{0}_r \setminus \{i_1, i_2\}$). 

We define a new graph $\widetilde{C} = \widetilde{C}(T_p)$ whose vertices are given by the set of all vertices of all ancestors of $T_p$ (including $T$ and $T_p$ itself, i.e. the union of the vertices sets $\bigcup^p_{r = 0} \T^{0}_r$), and such that two vertices $x, y \in \bigcup^p_{r = 0} \T^{0}_r$ are joined by an edge if and only if for some $r$ we have $x \in T^0_{r}$, $y \in T^0_{r + 1}$ and $y = \varphi_r(x)$. Since each map $\varphi_r$ is either a bijection or identifies two points, the internal vertices of $\widetilde{C}$ are either $2$-valent or $3$-valent. Figure \ref{TildeCFigure} illustrates the construction of $\widetilde{C}$ for the example of the tree $\{\gamma + \eta\}\bullet$  treated in Section \ref{sec:tropical:sub:statement}: in the picture dashed lines denote the edges of ancestors, solid lines the edges of $\tilde{C}$, and vertices are decorated by the direction of integration rays.
\begin{figure}[ht]
\begin{center}
\centerline{
\xymatrix{
\gamma^+ \ar@{-->}[rr]\ar@{-}[d] & & \eta^+\ar@{-}[d]\\
\gamma^-  \ar@{-->}[rr]\ar@{-}[dr]  & &\eta^+\ar@{-}[dl]\\
                                       &\gamma^-\ar@{-}[d] &\\
                                       &(\gamma+\eta)^-&
}
}
\end{center}
\caption{Construction of $\widetilde{C}$.}
\label{TildeCFigure}
\end{figure}
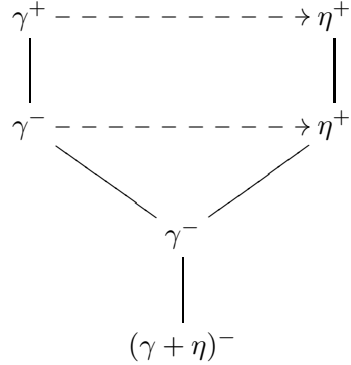 

Let $V$ be a $3$-valent vertex of $\widetilde{C}$, corresponding to a vertex of $T_r$. By construction, this determines a unique factor of the form \eqref{push2} in $I(T_{r-1})$, and $V$ corresponds to a unique nonzero residue term of the form \eqref{res2} or \eqref{res3}. This means that there there is a natural choice of incoming edges $E_1$, $E_2$, respectively an outgoing edge $E_{3}$. The edges $E_i$ come naturally with vectors $\alpha(E_i) \in \Gamma$: in the notation of \eqref{push2} - \eqref{res3} these are given by $(\alpha(i), \alpha, \alpha(i) + \alpha)$, respectively $(\alpha(i), \alpha, \alpha(k') + \alpha)$. Thus we always have the balancing condition
\begin{equation}\label{balancing}
-\alpha(E_1) - \alpha(E_2) + \alpha(E_3) = 0.
\end{equation}
Notice that the balancing condition is a direct consequence of the residue theorem and the property
\begin{equation*}
X^0(z', RZ)(e_{\alpha_1}) * X^0(z', RZ)(e_{\alpha_2}) = X^0(z', RZ)(e_{\alpha_1 + \alpha_2}).
\end{equation*}
At the same time we see that $\alpha(E_1), \alpha(E_2) \in \Gamma$ are linearly independent over $\Q$, otherwise the residue term with lattice element $\alpha(E_3) = \alpha(E_1) + \alpha(E_2)$ would not appear in \eqref{res2} - \eqref{res3}. Also, if $E$ and $E'$ are edges of $\widetilde{C}$ which are respectively outgoing and incoming to $3$-valent vertices $V, V'$, we must have $\alpha(E) = \alpha(E')$ (since no application of the residue theorem separates $V, V'$).  

Finally we define a graph $C$ obtained from $\widetilde{C}$ by forgetting all the internal $2$-valent vertices. In the case of the graph $\widetilde{C}$ depicted in Figure \ref{TildeCFigure} this produces the simplest tropical type of Figure \ref{tripod}.  

\subsection{Rational tropical curves in $\R^2$}\label{sec:tropical:sub:curves} We have attached to the special function $G_T(z^*, R Z^+)$ a finite collection of graphs $C_i$ with $3$-valent internal vertices, one for each single-vertex tree in $S^{(p)}(T)$ or, equivalently, one for each leading order term in the expansion 
\begin{equation*}
G_{T}(z^*, R Z^+) = \sum_{T'} \pm G_{T'}(z^*, R Z^-) + r(|Z^+ - Z^-|).
\end{equation*}
Each $C_i$ comes with the extra data of a decoration of its edges $E$ by elements $\alpha(E) \in \Gamma$, satisfying the above conditions of balancing \eqref{balancing} and linear independence.

The extra data say precisely that $C_i$ is the combinatorial type of a rational tropical curve immersed in $\R^2$. Following \cite{gps} Section 2.1, we define plane rational tropical curves as immersions of certain graphs in $\R^2$. 

Let $C$ denote a connected graph with only $3$-valent internal vertices. We suppose that $C$ is weighted, i.e. we have the extra data of a positive number $w(E)$ for every edge $E$ of $C$. We write $C$ as well for the topological model of the graph, and $C^o$ for the topological space obtained by removing all $1$-valent (external) vertices. A parametrised tropical curve in $\R^2$ is a proper map $h\!: C^o \to \R^2$, such that for all $E$, the map $h|_E$ is an embedding into an affine line of rational slope, and for which the following balancing condition folds. At each image of a vertex $h(V)$, we have well defined primitive vectors $ m_i \in \Z^2$ pointing out of $h(V)$ along the directions of the incident edges $E_1, E_2, E_3$. Then one requires
\begin{equation*}
w(E_1)m_1 + w(E_2)m_2 + w(E_3)m_3 = 0.
\end{equation*}
A rational plane tropical curve is then defined as the equivalence class of maps $h$ up to isomorphisms of the domain graph. Following \cite{gama} Section 2, the combinatorial type of a tropical curve is defined as the data of the underlying graph $C$, together with the vectors $m_i$ for each internal vertex $V$.

It is now clear that each of our graphs $C_i$ is the combinatorial type of a class of tropical curves. As an example, consider the tree 
\begin{equation*}
T = \{\gamma \to \eta  \to \gamma  \to 2\eta \}
\end{equation*} 
and fix the unique total order of vertices which is compatible with the orientation. Then the expansion for $G_T$ contains two leading order terms, labelled by the tropical types $C_1, C_2$ of Figures \ref{C1}, \ref{C2}.
\begin{figure}[ht]\label{tropType1}
\begin{center}
\centerline{
\xymatrix @-1pc{
                    &             &         &        &        &  &   \\
                    &             &         &        &        &     \\
                    &             &         &        &        &     \\
                    &             &         &        &    *++[o][F-]{}\ar_{2\gamma + 3\eta}@{-}[uuurr]    &     \\                    
\ar^{\gamma}@{-}[rr] &             & *++[o][F-]{}\ar^{2\gamma + \eta}@{-}[urr]         &        &        &      \\
\ar^{\gamma}@{-}[r]  & *++[o][F-]{}\ar_{\gamma + \eta}@{-}[ur]       &         &        &        &      \\
                                  & \ar^{\eta}@{-}[u]                                                                &         &        &  \ar^{2\eta}@{-}[uuu]      &  \\                                                
}
}
\end{center}
\caption{The tropical type $C_1$}\label{C1}
\end{figure}
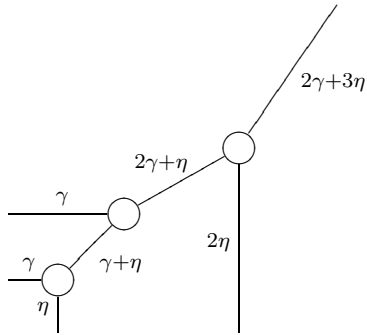

\begin{figure} 
\begin{center}
\centerline{
\xymatrix @-1pc{
                    &             &         &        &        &  &   \\
                    &             &         &        &        &     \\
                    &             &         &        &        &     \\
                    &             &         &        &   *++[o][F-]{}\ar^{2\gamma + 3\eta}@{-}[uuurr]     &     \\
                    &             &         &        &       &     \\                    
\ar^{\gamma}@{-}[rrr]  &             &           &  *++[o][F-]{}\ar_{\gamma+2\eta}@{-}[uur]      &        &      \\
\ar^{\gamma}@{-}[r]  & *++[o][F-]{}\ar^{\gamma + \eta}@{-}[uuurrr]       &         &        &        &      \\
                                  & \ar^{\eta}@{-}[u]                                                                &         &  \ar^{2\eta}@{-}[uu]      &         &  \\                                                
}
}
\end{center}
\caption{The tropical type $C_2$}\label{C2}
\end{figure}
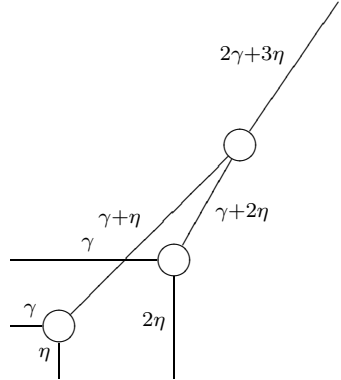 
  
\subsection{Tropical invariants}\label{sec:tropical:sub:invts} Just as for plane algebraic curves, there is a natural notion of degree for a plane rational tropical curve $(C^o, h)$ as above, see e.g. \cite{gama} Section 2: this is just the unordered collection of vectors $ - w(E_i) m_i \in \Z^2$ and $w(E_{\rm out}) m_{\rm out}$ attached to all the external edges of $C$. Notice that we allow $w(E_i) > 1$ for some or all the external edges. 

The enumerative theory of plane tropical curves of fixed degree through the expected number of general points is well established in all genera (going back to the foundational work of Mikhalkin \cite{mikhalkin}, see \cite{gama} for a result in the generality we need here). 

We will only be concerned with a very special enumerative invariant, which is described in detail in \cite{gps} Section 2.3. Choose $l_1$ general lines $\mathfrak{d}_{1j}$ with the same (positive, primitive) direction $d_1$, respectively $l_2$ general lines $\mathfrak{d}_{2j}$ in the direction $d_2$. We attach a positive integral weight $w_{ij}$ to the line $\mathfrak{d}_{ij}$. Look at the set of parametrised plane rational tropical curves $(C^o, h)$ having a collection of unbounded edges $E_{ij}, E_{\rm out}$, such that $h(E_{ij}) \subset \mathfrak{d}_{ij}$ and $w(E_{ij}) = w_{ij}$. By the balancing condition, the degree of these curves is determined by a weight vector ${\bf w} = ({\bf w}_1, {\bf w}_2)$, where each ${\bf w}_i$ is the collection of integers $w_{ij}$ (for $1 \leq i \leq 2$ and $1 \leq j \leq l_i$) such that  $1 \leq w_{i1} \leq w_{i2} \leq \dots \leq w_{i{l_i}}$. By the general theory, for generic $\mathfrak{d}_{ij}$ the number of isomorphism classes of parametrised curves $(C^o, h)$ as above is finite. Counting these tropical curves with the multiplicity of tropical geometry yields a number $N^{\rm trop}(\bf w) \in \N_{> 0}$, which is invariant under deformation of the constraints $\mathfrak{d}_{ij}$. 

Recall that the tropical multiplicity $\mu_V$ at a $3$-valent vertex $V \in h(C^o)$ with associated primitive vectors $m_i$ is defined as $|w(E_i)m_i \wed w(E_j) m_j|$ for $i \neq j$ (this is well defined by the balancing condition). The multiplicity of $(C^o, h)$ is $\prod_V \mu_V$, the product over all $3$-valent vertices. As an example $N^{\rm trop}((1,1),(1, 2)) = 8$ is computed by Figure \ref{Ntrop}. Notice that for the choice of constraints $\mathfrak{d}_{ij}$ displayed in the figure two combinatorial types appear: a curve of type $C_1$ and two curves of type $C_2$.   
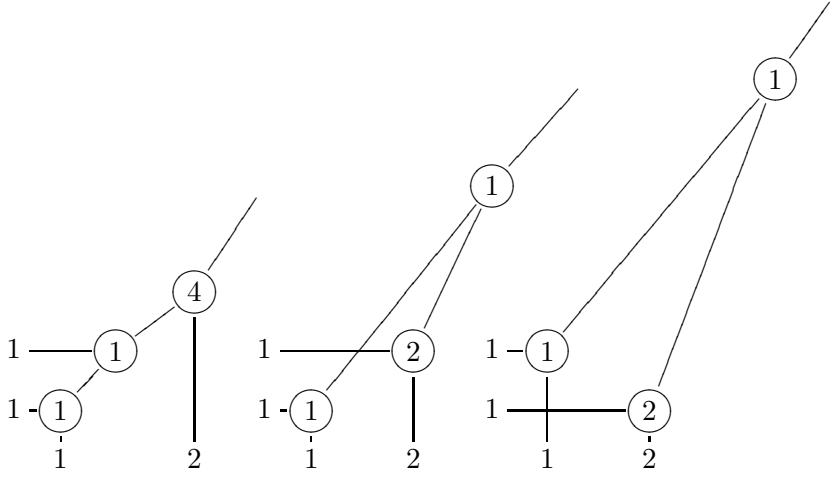
\begin{figure}[ht]
\begin{center}
\centerline{
\xymatrix @-1.8pc{
                    &             &         &        &        &  &   &                    &             &         &        &        &  & &                        &             &         &        &   & &  &  & & &\\
                    &             &         &        &        &  &   &                    &             &         &        &        &  &                        &             &         &        &   & &  &  & \\
                    &             &         &        &        &  &   &                    &             &         &        &        &  &                        &             &         &        &   & &  &  & \\
                    &             &         &        &        &  &   &                    &             &         &        &        &                   &             &         &        &   & &   & & *++[o][F-]{1}\ar@{-}[uuurr] \\
                    &             &         &        &        &  &   &                    &             &         &        &        &  &                        &             &         &        &   & &  &  & \\
                    &             &         &        &        &  &   &                    &             &         &        &        &  &                        &             &         &        &   & &  &   \\
                    &             &         &        &        &  &   &                    &             &         &        &        &   *++[o][F-]{1}\ar@{-}[uuurr]                       &             &         &        &  &      &     \\   
                    &             &         &        &        &     &                    &             &         &        &        &                         &             &         &        &  &      &     \\   
                    &             &         &        &        &     &                    &             &         &        &        &                &             &         &        &  &      &     \\   
                    &             &         &        &    *++[o][F-]{4}\ar@{-}[uuurr]    &     &                   &             &         &        &       &                         &             &         &        &  &      &     \\                      
1       \ar@{-}[rr] &             & *++[o][F-]{1}\ar@{-}[urr]         &      &      &    &1 \ar@{-}[rrrr]  &       &     & & *++[o][F-]{2}\ar@{-}[uuuurr]      &        &       1\ar@{-}[r]  & *++[o][F-]{1}\ar@{-}[uuuuuuurrrrrrr]            &           & &        &        &      \\
1 \ar@{-}[r]  & *++[o][F-]{1}\ar@{-}[ur]       &         &        &        &      & 1 \ar@{-}[r]  & *++[o][F-]{1}\ar@{-}[uuuuurrrrr]       &         &        &        &      &  1 \ar@{-}[rrrr]  &       &         &        &  *++[o][F-]{2}\ar@{-}[uuuuuuuurrrr]   &    &      \\ 
                                  & 1 \ar@{-}[u]                                                                &         &        &  2 \ar@{-}[uuu]      &  & & 1 \ar@{-}[u]    &         &  & 2 \ar@{-}[uu]      &         &          & 1 \ar@{-}[uu]                                                                &         & & 2 \ar@{-}[u]      &         &  \\                                                  
}
}
\end{center}
\caption{$N^{\rm trop}((1,1),(1, 2)) = 8$}\label{Ntrop}
\end{figure}

\begin{remark} Although we only defined the tropical invariants $N^{\rm trop}({\bf w})$ for two-components weight vectors ${\bf w}$, as explained in \cite{gps} Section 2.3, there is an obvious extension to an arbitrary number of components (with corresponding directions for the infinite ends).
\end{remark}
\subsection{Highest order terms and tropical invariants}\label{sec:tropical:sub:statement2} In the rest of this Section we will relate the (combinatorial types of) tropical curves $C_i$ constructed above to actual tropical invariants.

The $C_i$ attached to a single $T$, $G_T(z^*, R Z^+)$ all have the same tropical degree $\bf w$, which we will sometime denote by $\deg(T)$. The component ${\bf w}_1$ (${\bf w}_2$) can be identified with the set of multiples of $\gamma$ (respectively $\eta$) in the set of all decorations $\alpha(i)$ (in particular, $\bf w$ is independent of the arbitrary choice of a total order of vertices). Recall also that $C_i$ comes with a distinguished sign $\eps(C_i) = \pm 1$ (rather than a multiplicity), uniquely determined by the residue theorem through \eqref{res2} - \eqref{res3}. It is natural to consider the set of all trees $T$ defining the same degree $\bf w$, and to try and relate the sum $\sum_T \sum_i \eps(C_i(T))$ to $N^{\rm trop}({\bf w})$. Indeed we have the following result (recall that $\kappa$ denotes the value of the pairing $\bra \gamma, \eta\ket$, i.e. we are working with the $\kappa$-Kronecker quiver).
\vskip.2cm
\noindent{\textbf{Theorem \ref{thm:tropical2} (precise statement)}.} \emph{The sum over trees $T$ with $W_T(Z^+) \neq 0$ (i.e. decorated by positive multiples of $\gamma$ or $\eta$)} 
\begin{equation*}
\sum_{\deg(T) = {\bf w}} W_T \sum_i \eps(C_i(T)) 
\end{equation*}
\emph{equals the tropical invariant $N^{\rm trop}({\bf w})$, times the combinatorial factor in $c_{\bf w} \in \Gamma \otimes \Q$ given by}
\begin{equation*}
c_{\bf w} = \kappa^{l_1 + l_2} \frac{1}{|\Aut({\bf w})|}  \prod_{k, l} \frac{1}{w^2_{k l}} ( |{\bf w}|_1\gamma +  |{\bf w}|_2 \eta)
\end{equation*}
\vskip.2cm
\noindent Our proof is not direct, but relies instead on the methods of \cite{gps} Section 2.

\subsection{Tropical types and stability data}\label{sec:tropical:sub:start2} The functions $X(z, Z^{\pm}, R)$ induce flat sections of $\nabla(Z^{\pm}, R)$ on a supersector $\widehat{\Sigma}$ for $\nabla(Z^{-}, R)$, with the same asymptotics as $z \to 0$ (uniformly as $R \to \infty$). Since the connections $\nabla(Z^{\pm}, R)$ glue, choosing $z^* \in \widehat{\Sigma}$, when $|Z^+ - Z^-|\to 0$ we must have 
\begin{equation*}
X(z^*, R Z^+) - X(z^*, R Z^-) \to 0,
\end{equation*} 
uniformly as $R \to \infty$. So the same must be true for the difference
\begin{equation}\label{jump2}
\sum_T W_T(Z^+) G_T(z^*, R Z^+) - \sum_T W_T(Z^-) G_T(z^*, R Z^-).
\end{equation}

Let $T$ be a tree with $W_T(Z^+) \neq 0$ as usual. We have $W_T(Z^+) = W_T(Z^-)$ in this case. This follows since 
\begin{equation*}
\dt(h\gamma, Z^+) = \dt(h\gamma, Z^-), \quad \dt(k\eta, Z^+) = \dt(k\eta, Z^-).
\end{equation*} 
To check (for example) the first statement notice that for single-vertex trees we have
\begin{equation*}
W_{h\gamma}(Z^{\pm}) G_{h\gamma}(z^*, Z^{\pm}, R) = \dt(k\gamma, Z^{\pm}) \int_{\ell_{\gamma}(Z^{\pm})} \frac{d z}{z} \rho(z^*, z) X^0(z, Z^{\pm}(h\gamma), R). 
\end{equation*}
Therefore $W_{h\gamma}(Z^+) G_{h\gamma}(z^*, R Z^+) - W_{h\gamma}(Z^-) G_{h\gamma}(z^*, R Z^-)$ has the form 
\begin{equation*}
(\dt(h\gamma, Z^+) - \dt(h\gamma, Z^-))(2 R |Z^-(h\gamma)|)^{-1}\exp(-2R|Z^-(h\gamma)|) e_{h\gamma}
\end{equation*} 
as $|Z^+ - Z^-| \to 0$, uniformly as $R \to \infty$, and by Theorem \ref{thm:tropical1} (precise form) it cannot be cancelled by some other term in \eqref{jump2}.

Let us go back to the difference \eqref{jump2}. Pick a primitive $\beta \in \Gamma$. By the expansion \eqref{expansion}, the $e_{\beta}$ component of the first summand contains a distinguished sum of highest order terms
\begin{equation*}
\sum_{W_T(Z^+) \neq 0, \sum_i \alpha(i) = \beta} W_T \sum_i \eps(C_i(T)) \int_{\ell_{\beta}(Z^-)} \frac{d z}{z} \rho(z^*, z) X^0(z, Z^{-}(\beta), R),  
\end{equation*} 
which is uniquely characterised by its asymptotics as $|Z^+ - Z^-|\to 0$, $R \to \infty$. The unique term in the second summand of \eqref{jump2} with matching asymptotics is
\begin{equation*}
\dt(\beta, Z^-)\beta \int_{\ell_{\beta}(Z^-)} \frac{d z}{z} \rho(z^*, z) X^0(z, Z^{-}(\beta), R). 
\end{equation*}
We have proved
\begin{equation}\label{dt}
\dt(\beta, Z^-)\beta = \sum_{W_T(Z^+) \neq 0, \sum_i \alpha(i) = \beta} W_T \sum_i \eps(C_i(T)).
\end{equation}

\subsection{Refinement} Consider the set of all trees with $W_T(Z^+) \neq 0$, i.e. decorated with positive multiples of $\gamma, \eta$, and with total decoration $\sum_i \alpha(i) = \beta$. In the previous Section, we related the sum of the signs $\eps(C_i(T))$ attached to tropical types over all such trees to the stability data, that is the quantity $\dt(\beta, Z^-)$. 

Fix a weight vector $\bf w$ such that $\beta = |{\bf w}|_1\gamma +  |{\bf w}|_2 \eta$. We need to prove a more refined result, given a similar link between stability data and the sum 
\begin{equation*}
\sum_{W_T(Z^+) \neq 0, \deg(T) = {\bf w}} W_T \sum_i \eps(C_i(T)).
\end{equation*}
To achieve this we consider a larger lattice $\overline{\Gamma}$ mapping to $\Gamma$. Denoting by $l_i$ the length of ${\bf w}_i$, we take $\overline{\Gamma}$ to be generated by elements ${\gamma_1, \dots, \gamma_{\ell_1}}$ and $\eta_1, \dots, \eta_{\ell_2}$ such that 
\begin{equation*}
\bra \gamma_i, \gamma_j\ket = \bra \eta_i, \eta_j \ket = 0, \bra \gamma_i, \eta_j\ket = 1.
\end{equation*} 
The map $\pi\!:\Gamma \to \Z^2$ is given by $\pi(\gamma_i) = \gamma, \pi(\eta_j) = \eta$. There is of course a pullback family of elements of $\Hom(\overline{\Gamma}, \C)$ induced by our family $Z$; we will suppress the pullback in our notation. We look at the unique continuous family of stability data on $\g_{\overline{\Gamma}}$ which correspond to setting $\Omega(\gamma_i, Z^+) = \Omega(\eta_j, Z^+) =1$, with all other $\Omega(\alpha, Z^+)$ vanishing.

The analogues of the asymptotic expansion \eqref{expansion}, the construction of tropical types and of the argument leading to \eqref{dt} are straightforward; the only difference is that we consider now trees $\overline{T}$ which are labelled by positive multiples of $\gamma_i, \eta_j$. We still write $\sum_i \alpha(i)$ for the sum of all decorations of $\overline{T}$. Thus we have
\begin{equation*} 
\dt(\overline{\beta}, Z^-)\overline{\beta} = \sum_{W_{\overline{T}}(Z^+) \neq 0, \sum_i \alpha(i) = \overline{\beta}} W_{\overline{T}} \sum_i \eps(C_i(\overline{T})). 
\end{equation*} 
This is still not enough for our purposes. We need to impose the condition that the trees over which we sum have precisely $l_1 + l_2$ vertices. This is possible if we consider a formal version of our stability data setting $\Omega(\gamma_i, Z^+) = \Omega(\eta_j, Z^+) = \epsilon$, with all other $\Omega(\alpha, Z^+)$ vanishing. We can think of $\epsilon$ as a formal parameter or as an arbitrary rational number. The setup is unchanged, except that $W_{\overline{T}}$ and $\dt(\overline{\beta}, Z^-)$ will now be polynomials in the variable $\epsilon$. Therefore
\begin{equation}\label{dt2}
\dt(\overline{\beta}, Z^-)[\epsilon^{l_1 + l_2}]\epsilon^{l_1 + l_2}\overline{\beta} = \sum_{W_{\overline{T}}(Z^+) \neq 0, \sum_i \alpha(i) = \overline{\beta}, |\overline{T}^0| = l_1 + l_2} W_{\overline{T}} \sum_i \eps(C_i(\overline{T})), 
\end{equation} 
where $\dt(\overline{\beta}, Z^-)[\epsilon^{l_1 + l_2}]$ denotes the coefficient of the monomial $\epsilon^{l_1 + l_2}$. This is the refinement we need. Given a weight vector ${\bf w}$ as above, we construct an element $\bar{\beta}$ as
\begin{equation*}
\bar{\beta} = \sum^{l_1}_{i = 1} |w_{1i}| \gamma_i + \sum^{l_2}_{j = 1} |w_{2j}| \eta_j. 
\end{equation*}
The set of trees $\overline{T}$ such that $W_{\overline{T}}(Z^+) \neq 0$, $\sum_i \bar{\alpha}(i) = \bar{\beta}$ and $|\overline{T}^0| = l_1 + l_2$ is precisely the set $\overline{P}$ of rooted trees with $l_1 + l_2$ vertices decorated by 
\begin{equation*}
\{w_{11}\gamma_1, \dots, w_{1l_1}\gamma_{l_1}, w_{21}\eta_1, \dots, w_{2l_2}\eta_{l_2}\}.
\end{equation*}
There is a forgetful map from $\overline{P}$ to the set $P$ of rooted trees $T$ decorated by elements of $\Gamma$, with $W_T(Z^+) \neq 0$ and $\deg(T) = {\bf w}$, given by replacing $\gamma_i$ with $\gamma$ and $\eta_j$ with $\eta$. This is clearly onto. For $\overline{T}$ mapping to $T$, we have (after $\Q$-linear extension of $\pi$)
\begin{equation*}
\pi(W_{\overline{T}}) = \epsilon^{l_1 + l_2} \kappa^{-|T^0|}|\Aut(T)| W_T.
\end{equation*} 
We also have
\begin{equation*}
\eps(C_i(\overline{T})) = \eps(C_i(T)),
\end{equation*}
where the latter is computed with respect to the total order induced from $\overline{T}$. On the other hand, the fibre of $\overline{P} \to P$ over $T$ contains $(\Aut(T))^{-1}\Aut({\bf w})$ trees. Applying $\pi$ to both sides of \eqref{dt2} proves
\begin{equation}\label{dt3}
\dt(\overline{\beta}, Z^-)[\epsilon^{l_1 + l_2}] \beta  = |\Aut({\bf w})|\sum_{W_T(Z^+) \neq 0, \deg(T) = {\bf w}} \kappa^{-|T^0|}W_{T} \sum_i \eps(C_i(T)). 
\end{equation}
  
\subsection{Application of a result of \cite{gps}} In the last step of the proof we relate the stability data $\dt(\overline{\beta}, Z^-)[\epsilon^{l_1 + l_2}]\epsilon^{l_1 + l_2}$ to the tropical count $N^{\rm trop}({\bf w})$. This is where the techniques of \cite{gps} Section 2 are required. 

By its very definition, $\dt(\overline{\beta}, Z^-)[\epsilon^{l_1 + l_2}]$ admits the following description. Consider the ordered factorisation problem in $\Aut(\widehat{\g}_{\overline{\Gamma}})$ given by
\begin{equation}\label{orderedP}
\prod_{j} \Ad \exp(-\epsilon \operatorname{Li}_2( e_{\eta_j})) \prod_{i} \Ad \exp(-\epsilon \operatorname{Li}_2( e_{\gamma_i})) = \prod^{\to} \Ad \exp(-\Omega(\bar{\alpha}, Z^-)(\epsilon) \operatorname{Li}_2(e_{\bar{\alpha}}))
\end{equation}
where $\bar{\alpha} = \sum^{\ell_1}_{i = 1} a_i \gamma_i + \sum^{\ell_2}_{j = 1} b_j \eta_j$ and we are writing the operators from left to right in the clockwise order of $Z^+(\bar{\alpha}) = Z^+(\alpha)$, for $\alpha = \pi(\bar{\alpha})$. It is straightforward to check that, by the definition of $\overline{\Gamma}$, operators supported on the same ray commute (even if $Z^{-}$ is degenerate) so \eqref{orderedP} is well posed and admits a unique solution. To compute this, we compare \eqref{orderedP} with an ordered factorisation problem for automorphisms of a different algebra. As an intermediate step, let $R$ denotes the formal power series ring $R =  \C[[s_1, \dots, s_{\ell_1}, t_1, \dots, t_{\ell_2}]]$. If we notice that \eqref{orderedP} is equivalent to the factorisation problem over $\Aut(\g \otimes R)$
\begin{align}\label{orderedP2}
\nonumber &\prod_{j} \Ad \exp(-\epsilon \operatorname{Li}_2(t_j e_{\eta})) \prod_{i} \Ad \exp(-\epsilon \operatorname{Li}_2(s_i e_{\gamma}))\\
& = \prod^{\to}_{\ell} \Ad \exp\left(-\sum_{\alpha \in \ell} \dt(\alpha, Z^-)(\epsilon) e_{\alpha}\right),
\end{align}
in the following sense: $\dt(\alpha, Z^-)(\epsilon)$ will now be a polyomial in the variables $s_i, t_j$ (as well as $\epsilon$), and in fact
\begin{equation*}
\dt(\alpha, Z^-)(\epsilon) = \sum_{\pi(\bar{\alpha}) = \alpha} \dt(\bar{\alpha}, Z^-)(\epsilon) (s, t)^{\bar{\alpha}},
\end{equation*}
where we set $(s,t)^{\bar{\alpha}} = \prod_{i, j} s^{a_i}_i t^{b_j}_j$. Thus $\dt(\overline{\beta}, Z^-)[\epsilon^{l_1 + l_2}]$ appears as the coefficient of the monomial $\eps^{l_1 + l_2} (s,t)^{\bar{\beta}}$ in the polynomial $\dt(\beta, Z^-)(\epsilon)$.

We are now in a position to compare with the results of \cite{gps}. First, as in ibid. Section 1, we identify the operators appearing in \eqref{orderedP2} with (symplectic) automorphisms $\theta_{\alpha', f_{\alpha'}}$ of the power series ring $\C[\Gamma][[s_i, t_j]]$ over the group algebra $\C[\Gamma]$, acting (for primitive $\alpha'$) by
\begin{equation*}
\theta_{\alpha', f_{\alpha'}}(e_{\gamma}) = e_{\gamma} f^{\bra \alpha', \gamma \ket}_{\alpha'},\,\,\theta_{\alpha', f_{\alpha'}}(e_{\eta}) = e_{\eta} f^{\bra \alpha', \eta \ket}_{\alpha'}. 
\end{equation*}
The operators appearing on the left hand side of \eqref{orderedP2} act by
\begin{equation*}
\theta_{\eta, f_j}(e_{\gamma}) = e_{\gamma} (1 - t_j e_{\eta})^{\epsilon},\,\, \theta_{\eta, f_{\eta}}(e_{\eta}) = e_{\eta},  
\end{equation*}
respectively
\begin{equation*}
\theta_{\gamma, f_i}(e_{\gamma}) = e_{\gamma},\,\, \theta_{\gamma, f_i}(e_{\eta}) = e_{\eta}(1 - s_i e_{\gamma})^{\eps}.  
\end{equation*}
Therefore 
\begin{equation*}
\log f_j = \sum_{p\geq 1} \frac{\epsilon}{p} e_{p\eta} t^p_j,\,\,\log f_i = \sum_{q \geq 1}  \frac{\epsilon}{q} e_{q\gamma} s^q_i. 
\end{equation*}
In the notation of \cite{gps} Section 1 (p. 312), we have 
\begin{equation*}
a_{j p p} = \frac{\epsilon}{p^2},\,\, a_{i q q} = \frac{\epsilon}{q^2},
\end{equation*}
with all other $a_{j k l}, a_{i k' l'}$ vanishing. For $\alpha'$ primitive, the automorphism 
\begin{equation*}
\Ad \exp\left(-\sum_{k\geq 1} \dt(k \alpha', Z^-)(\epsilon) e_{k\alpha'}\right)
\end{equation*} 
is the same as $\theta_{\alpha', f_{\alpha'}}$, with $\log f_{\alpha'} = \sum_{k\geq 1} \dt(k \alpha', Z^-)(\epsilon) e_{k\alpha'}$. Let us go back to our $\bar{\beta} \in \overline{\Gamma}$ with $\beta = \pi(\bar{\beta})$ primitive. According to \cite{gps} Theorem 2.8, the coefficient of the monomial $(s, t)^{\overline{\beta}} e_{\beta}$ in $\log f_{\beta}$ admits a tropical description: it equals the sum
\begin{equation*}
\sum_{{\bf w}'} \frac{N^{\rm trop}({\bf w}')}{\Aut({\bf w}')} \prod_j \prod_m a_{j w'_{j m} w'_{j m}} \prod_i a_{i w'_{i n} w'_{i n}} 
\end{equation*}
summing over weight vectors ${\bf w}' = ({\bf w}'_{i n}, {\bf w}'_{j m})$, with $l_1 + l_2$ components, such that 
\begin{equation*}
\sum_n {\bf w}'_{i n} = {\bf w}_{1i},\,\, \sum_m {\bf w}'_{j m} = {\bf w}_{2 j}.
\end{equation*}   
The invariant $N^{\rm trop}({\bf w}')$ here is computed for a generic choice of constraints $\mathfrak{d}_{i n}$ with the same direction, and similarly $\mathfrak{d}_{j m}$ with the same direction. The components ${\bf w}'_{i n}$, ${\bf w}'_{j m}$ can be arbitrary increasing collections, satisfying only the condition above. However, we can refine our calculation further by looking only at the coefficient of the monomial $\epsilon^{l_1 + l_2} (s, t)^{\overline{\beta}} e_{\beta}$ in $\log f_{\beta}$. By the specific form of the coefficients $a_{j p p}, a_{i q q}$ this coefficient is given by the sum over weight vectors ${\bf w}'$ for which the collections ${\bf w}'_{i n}$, ${\bf w}'_{j m}$ contain a single element. There is precisely one such ${\bf w}'$, given by ${\bf w}' = ((w_{11}), \dots, (w_{1 l_1}), (w_{21}), \dots, (w_{2 l_2}))$. Clearly, $|\Aut({\bf w}')| = 1$, and as ${\bf w}'$ is just a subdivision of $\bf w$ (the type of constraints is the same), we have $N^{\rm trop}({\bf w}') = N^{\rm trop}({\bf w})$. Thus the coefficient of the monomial $\epsilon^{l_1 + l_2} (s, t)^{\overline{\beta}} e_{\beta}$ equals $N^{\rm trop}({\bf w}) \prod_{k, l} \frac{1}{w^2_{k l}}$. On the other hand we know already that this coefficient is precisely $\dt(\overline{\beta}, Z^-)[\epsilon^{l_1 + l_2}]$. We have proved
\begin{equation}\label{dtTrop}
\dt(\overline{\beta}, Z^-)[\epsilon^{l_1 + l_2}] = N^{\rm trop}({\bf w}) \prod_{k, l} \frac{1}{w^2_{k l}}.
\end{equation}

\subsection{Comparison}\label{sec:tropical:sub:finish2} Comparing our formulae \eqref{dt3}, \eqref{dtTrop} gives the promised connection between the tropical types attached to flat sections and actual tropical counts,
\begin{equation}\label{GMNTrop}
\sum_{\deg(T) = {\bf w}} W_{T} \sum_i \eps(C_i(T)) = \kappa^{l_1 + l_2} \frac{N^{\rm trop}({\bf w})}{|\Aut({\bf w})|}  \prod_{k, l} \frac{1}{w^2_{k l}} ( |{\bf w}|_1\gamma +  |{\bf w}|_2 \eta),
\end{equation}
where we are summing over trees $T$ with $W_T(Z^+) \neq 0$, i.e. decorated by positive multiples of $\gamma$ or $\eta$. This equality holds in $\Gamma \otimes_{\Z} \Q$. We can make the relation a bit more explicit. Indeed one has
\begin{equation*}
W_{T} = \prod_{k, l} \frac{1}{w^2_{k l}}  \frac{1}{|\Aut(T)|} \prod_{v \to w} \bra \alpha(v), \alpha(w)\ket \gamma_{T}, 
\end{equation*}
and so
\begin{align*}
&\sum_{\deg(T) = {\bf w}}  \frac{1}{|\Aut(T)|} \prod_{\{v \to w\} \subset T} \bra \alpha(v), \alpha(w)\ket \sum_i \eps(C_i(T)) \gamma_{T} \\
&= \kappa^{l_1 + l_2}\frac{N^{\rm trop}({\bf w})}{|\Aut({\bf w})|} (|{\bf w}|_1\gamma +  |{\bf w}|_2 \eta).
\end{align*}

\end{document}